\DeclareFontFamily{U}{rsfs}{%
\skewchar\font127}
\DeclareFontShape{U}{rsfs}{m}{n}{%
<-6>rsfs5<6-8.5>rsfs7<8.5->rsfs10}{}
\DeclareSymbolFont{rsfs}{U}{rsfs}{m}{n}
\DeclareRobustCommand*\rsfs{%
\@fontswitch\relax\mathrsfs}
\theoremstyle{plain}
\newtheorem{thm}{Theorem}[section]
\newtheorem*{thm*}{Theorem}
\newtheorem{prop}[thm]{Proposition}
\newtheorem{constr}[thm]{Construction}
\newtheorem{defi}[thm]{Definition}
\newtheorem{rmk}[thm]{Remark}
\newtheorem*{cor*}{Corollary}
\newtheorem{prop-defi}[thm]{Proposition-Definition}
\newtheorem{thm-defi}[thm]{Theorem-Definition}
\newtheorem{lem-defi}[thm]{Lemma-Definition}
\newtheorem*{question*}{Question}
\newtheorem{exam}[thm]{Example}
\newtheorem{setup}[thm]{Setup}
\newtheorem{setup-def}[thm]{Setup-Definition}
\newdimen\argwidth
\def\db[#1\db]{
 \setbox0=\hbox{$#1$}\argwidth=\wd0
 \setbox0=\hbox{$\left[\box0\right]$}
  \advance\argwidth by -\wd0
 \left[\kern.3\argwidth\box0 \kern.3\argwidth\right]}
\newcommand{\cC}{\mathcal{C}}
\newcommand{\hH}{\mathcal{H}}
\newcommand{\oO}{\mathcal{O}}
\newcommand{\sS}{\mathcal{S}}
\newcommand{\uU}{\mathcal{U}}
\newcommand{\wW}{\mathcal{W}}
\newcommand{\xX}{\mathcal{X}}
\newcommand{\Ob}{\mathcal{O}b}
\newcommand{\bL}{\mathbb{L}}
\newcommand{\bR}{\mathbb{R}}
\renewcommand{\tilde}{\widetilde}
\renewcommand{\hat}{\widehat}
\newcommand{\dspec}{\mathop{{\textbf{Spec}}}\nolimits}
\newcommand{\lr}{\longrightarrow}
\newcommand{\dR}{\mathbf{R}}
\newcommand{\At}{\mathop{\rm At}\nolimits}
\newcommand{\ch}{\mathop{\rm ch}\nolimits}
\newcommand{\rk}{\mathop{\rm rk}\nolimits}
\newcommand{\Ext}{\mathop{\rm Ext}\nolimits}
\newcommand{\Spec}{\mathop{\rm Spec}\nolimits}
\newcommand{\Coh}{\mathop{\rm Coh}\nolimits}
\newcommand{\im}{\mathop{\rm im}\nolimits}
\newcommand{\re}{\mathop{\rm Re}\nolimits}
\newcommand{\RHOM}{\mathop{\dR {\hH}om}\nolimits}
\newcommand{\tr}{\mathop{\rm tr}\nolimits}
\newcommand{\bC}{\mathbb{C}}
\newcommand{\bZ}{\mathbb{Z}}
\newcommand{\bT}{\mathbb{T}}
\newcommand{\bI}{\mathbb{I}}
\newcommand{\bF}{\mathbb{F}}
\def\vir{\mathrm{\vir}}
\def\loc{\mathrm{\loc}}
\def\unxX{\underline \xX}
\def\unuU{\underline \uU}
\def\Coo{\cC^\infty}
\def\dd{\mathrm{d}}
\def\bT{\mathbb{T}}
\def\ti{\tilde}
\def\sub{\subset}
\def\virt{^{\mathrm{vir}}}
\def\beq{\begin{equation}}
\def\eeq{\end{equation}}
\def\loc{{\mathrm{loc}}}
\def\@tocline#1#2#3#4#5#6#7{\relax
  \ifnum #1>\c@tocdepth 
  \else
    \par \addpenalty\@secpenalty\addvspace{#2}%
    \begingroup \hyphenpenalty\@M
    \@ifempty{#4}{%
      \@tempdima\csname r@tocindent\number#1\endcsname\relax
    }{%
      \@tempdima#4\relax
    }%
    \parindent\z@ \leftskip#3\relax \advance\leftskip\@tempdima\relax
    \rightskip\@pnumwidth plus4em \parfillskip-\@pnumwidth
    #5\leavevmode\hskip-\@tempdima
      \ifcase #1
       \or\or \hskip 1em \or \hskip 2em \else \hskip 3em \fi%
      #6\nobreak\relax
    \hfill\hbox to\@pnumwidth{\@tocpagenum{#7}}\par
    \nobreak
    \endgroup
  \fi}
\title[Cosection Localization for D-Manifolds]{Cosection Localization and Vanishing for Virtual Fundamental Classes of D-Manifolds}
\author{Michail Savvas}
\address{Department of Mathematics, The University of Texas at Austin, Austin, TX, 78712, USA}
\email{msavvas@utexas.edu}
\begin{document}

\maketitle

\begin{abstract} 
We establish cosection localization and vanishing results for virtual fundamental classes of derived manifolds, combining the theory of derived differential geometry by Joyce with the theory of cosection localization by Kiem-Li. As an application, we show that the stable pair invariants of hyperk\"{a}hler fourfolds, defined by Cao-Maulik-Toda, are zero.
\end{abstract}

\tableofcontents

\section{Introduction} The study of coherent sheaves and vector bundles on Calabi-Yau manifolds has been a long-standing subject of interest in mathematics and theoretical physics. 

From the point of view of enumerative geometry, the virtual counts of coherent sheaves on Calabi-Yau threefolds are known as Donaldson-Thomas invariants and were first introduced by Thomas \cite{Thomas}. He showed that moduli spaces of stable sheaves admit a perfect obstruction theory \cite{BehFan} and thus a virtual fundamental cycle \cite{BehFan, LiTian}. These moduli spaces enjoy many rich structures and the invariants and their generalizations and refinements have been extensively studied (for a far from exhaustive list of references, cf. \cite{BehFun, JoyceSong, KontSoibel, JoyceSch, JoyceCat, Okou1, KLS, Sav, KiemSavvas}).

Suppose now that $W$ is a smooth, projective complex Calabi-Yau fourfold (meaning that its canonical bundle is trivial, $K_W \cong \oO_W$) and $X$ a proper moduli scheme parameterizing stable sheaves on $W$ with fixed determinant. In this case, the deformations and (higher) obstructions for a sheaf $E$ on $W$ are controlled by the groups
\begin{align*}
    \Ext^1(E,E)_0,\ \Ext^2(E,E)_0,\ \Ext^3(E,E)_0
\end{align*}
and thus the natural candidate complex for a perfect obstruction theory has three terms, making the results of \cite{BehFun, LiTian} and the theory of virtual fundamental cycles not directly applicable.

In order to define Donaldson-Thomas invariants, Cao-Leung \cite{CaoLeungDT4} first suggested a gauge-theoretic approach, which produces invariants in certain cases. Later, in their seminal paper \cite{BorisovJoyce}, Borisov-Joyce generalized this approach to define invariants in great generality, making crucial use of the fact that $X$ is the truncation of a $(-2)$-shifted symplectic derived scheme \cite{PTVV} and using the theory of derived differential geometry \cite{JoyceDMan}. The virtual fundamental classes they define are thus of differential and not algebraic nature and depend on a choice of orientation, which exists by \cite{CaoGrossJoyceOrient}. In \cite{OhThomasI, OhThomasII}, Oh-Thomas develop a definition strictly within the realm of algebraic geometry and prove that their virtual fundamental cycle coincides with the class of Borisov-Joyce.

\medskip

Computing Donaldson-Thomas invariants and invariants of Donaldson-Thomas type for Calabi-Yau fourfolds has attracted significant interest recently, cf. \cite{CaoKoolZero, CaoKoolDT4PT4, CaoKoolMonavariKDT4PT4, CaoKoolMonavariPT4LocalCY, CaoTodaDerivedDT4, CaoTodaGVDescendent, CaoGV0Fano, CaoMaulikToda, CaoMaulikToda2}.

One of the most important methods in handling such invariants in algebraic geometry has been the localization of virtual fundamental cycles by cosections introduced by Kiem-Li \cite{KiemLiCosection}.

The purpose of this paper is to establish cosection localization and vanishing results for the virtual fundamental class appearing in the context of Borisov-Joyce, which will aid in working with these invariants, and at the same time enhance our understanding of virtual classes in the context of derived differential geometry. To do so, we combine the theory of derived manifolds by Joyce \cite{JoyceDMan} with the analytic and topological versions of cosection localization by Kiem-Li \cite{KiemLiSmoothCosection,KiemLiCosection,KiemLiQuantum}. The corresponding algebraic theory for the virtual fundamental cycle constructed by Oh-Thomas has recently been developed by Kiem-Park in \cite{KiemPark}. We expect that our localized virtual fundamental class coincides with the one constructed by Kiem-Park under the appropriate comparison result proved in \cite{OhThomasII}.  

\subsection*{Statement of results} Here we give brief statements of our main results. These fall into three categories depending on the assumptions made: vanishing of the virtual fundamental class in the presence of a surjective weak real cosection, localization by a real cosection, and localization by complex cosections which ``come from geometry".

We have the following vanishing result.

\begin{thm*}[Theorem~\ref{vanishing for surjective real cosection}]
Let $\xX$ be a compact, oriented d-manifold with a continuous family of surjective $\bR$-linear maps $\sigma_x \colon h^1(\bT_{\xX}|_x) \to \bR$ and underlying topological space $X$. Then its virtual fundamental class is zero, i.e. $[\xX]\virt = 0 \in H_{\mathrm{vir.dim.}}(X)$.
\end{thm*}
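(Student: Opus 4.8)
The plan is to reduce the statement to a local Euler-class computation and then run the cosection-localization mechanism of Kiem--Li, for which surjectivity of $\sigma$ forces the localizing support to be empty. First I would invoke Joyce's local structure theory: every point of $\xX$ has a Kuranishi neighbourhood modelled on data $(V,E,s)$, with $s$ a smooth section of a vector bundle $E \to V$ over a manifold $V$, $\xX$ locally equivalent to $s^{-1}(0)=:Z$, and $\bT_{\xX} \simeq [TV \xrightarrow{ds} E]$, so that $h^1(\bT_{\xX}|_x) = \coker(ds_x)$. Using continuity of the family $\{\sigma_x\}$ together with surjectivity, I would lift $\sigma$ on each chart to a bundle surjection $\bar\sigma \colon E \to \underline{\bR}$ defined near $Z$ with $\bar\sigma \circ ds = 0$ along $Z$ (so that it genuinely factors through the cokernel); then $K := \ker\bar\sigma \subset E$ is a corank-one subbundle and $\bar\sigma$ canonically trivialises the quotient $E/K \cong \underline{\bR}$ wherever $\sigma$ is surjective.

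The heart of the argument is that $\bar\sigma$ lets one peel off the trivial $\underline{\bR}$-direction. Writing $s = (s_0,s_1)$ for a splitting $E \cong K \oplus \underline{\bR}$, one has $s_1 = \bar\sigma \circ s$, which vanishes to second order on $Z$ since $d s_1|_Z = \bar\sigma \circ ds|_Z = 0$. A compactly supported perturbation $\tilde s = s + \rho\, c_0$ by a small nowhere-zero section $c_0$ of the $\underline{\bR}$-factor, cut off by a bump function $\rho \equiv 1$ near $Z$, then has $\bar\sigma(\tilde s) = s_1 + \bar\sigma(c_0)$ bounded away from $0$ near $Z$, and no new zeros elsewhere when $c_0$ is small (by compactness $s$ is bounded below off a neighbourhood of $Z$); hence $\tilde s^{-1}(0) = \varnothing$. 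Because $\sigma$ is a \emph{global} continuous family and the quotient line is canonically trivial wherever $\sigma$ is surjective, these local perturbations assemble into exactly the data packaged by Kiem--Li's topological and analytic cosection-localized Euler class. I would therefore adapt their construction to Joyce's d-manifolds to produce a localized class $[\xX]\virt_{\loc}$ supported on the degeneracy locus $\xX(\sigma) = \{x : \sigma_x \text{ not surjective}\}$ and satisfying $\iota_\ast [\xX]\virt_{\loc} = [\xX]\virt$ for the inclusion $\iota \colon \xX(\sigma) \hookrightarrow X$; equivalently, the normal cone carrying the virtual class lies in $\ker\bar\sigma$ away from $\xX(\sigma)$, so the ordinary Gysin pushforward factors through $H_\ast(\xX(\sigma))$.

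To conclude, surjectivity of every $\sigma_x$ means $\xX(\sigma) = \varnothing$, so $[\xX]\virt_{\loc} \in H_{\mathrm{vir.dim.}}(\varnothing) = 0$, whence $[\xX]\virt = \iota_\ast [\xX]\virt_{\loc} = 0$. From the perturbation viewpoint this is transparent: $\tilde s$ is an everywhere-nonzero, compactly supported deformation of $s$, giving a d-bordism from $\xX$ to the empty d-manifold, so $[\xX]\virt = 0$ by deformation invariance of the virtual class.

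The main obstacle is not the surjective case \emph{per se} but setting up the localized class intrinsically on a d-manifold. One must glue the chart-wise data $(V,E,s,\bar\sigma)$ along Joyce's coordinate changes, check that the lift $\bar\sigma$ and the trivialisation of $E/\ker\bar\sigma$ are compatible with these changes using only that $\sigma$ is a continuous family of linear maps on the $h^1(\bT_\xX|_x)$ (a \emph{weak} cosection, not an honest global bundle map), and verify that the resulting topological localized Euler class is independent of all choices and compatible with the chosen orientation so as to land in integral homology. Transporting Kiem--Li's cone-reduction and localized Gysin map into Joyce's derived-differential-geometric category is the step requiring the most care; once it is in place, the vanishing follows formally from the emptiness of $\xX(\sigma)$.
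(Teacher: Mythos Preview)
Your strategy is sound but substantially more elaborate than what the paper does, and you are missing the key simplifying ingredient that makes the surjective case nearly trivial: Joyce's Whitney embedding theorem for \emph{compact} d-manifolds (Theorem-Definition~\ref{presentation as a d-manifold}) gives a \emph{global} presentation $\xX \simeq \sS_{Y,E,s}$ with $Y$ open in $\bR^N$. There is no need to work chart-by-chart and then worry about gluing Kuranishi data or local perturbations; the step you flag as ``requiring the most care'' simply does not arise.

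With a single global model in hand, the paper's argument is a one-liner. The weak cosection composes with the surjection $E|_{X_{\Coo}}\twoheadrightarrow \Ob_{\xX}$ to give a continuous surjection $E|_X\to\bR_X$, which by compactness of $X$ extends to a surjection $E\to\bR_Y$ after shrinking $Y$. Since $f_*[\xX]\virt = e(E)\cap[Y]$ by the definition of the virtual class, and $E$ now has a trivial rank-one quotient, $e(E)=0$ and hence $[\xX]\virt=0$. Your perturbation paragraph is essentially a hands-on proof of this same Euler-class vanishing, and on a global chart it is correct; but you never need normal cones, cone reduction, a localized Gysin map, or any Kiem--Li machinery in the surjective case. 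Indeed, the paper explicitly postpones all of that to the general (non-surjective) case in Section~\ref{Real Cosections Loc Section}, precisely because the surjective case is so much more elementary.
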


This statement is essentially already embedded in the literature, following from the definition of the virtual fundamental class of $\xX$ given in \cite{JoyceDMan}.

More generally, when a d-manifold admits a real cosection, we may localize the virtual fundamental class to its degeneracy locus.

\begin{thm*}[Theorem~\ref{localization by real cosection}]
Let $\xX$ be a compact, oriented d-manifold with underlying topological space $X$, equipped with a real cosection $\sigma \colon h^1(\bT_{\xX}|_X) \to \bR_X$ (cf. Definition~\ref{cosection for d-manifold}). Let $X(\sigma)$ be the locus where $\sigma$ is not surjective and $i \colon X(\sigma) \to X$ be the inclusion map. Then the virtual fundamental class localizes to $X(\sigma)$, i.e. there exists a cosection localized virtual fundamental class $[\xX]\virt_{\loc, \sigma} \in H_{\mathrm{vir.dim.}}(X(\sigma))$ satisfying
$$i_* [\xX]\virt_{\loc, \sigma} = [\xX]\virt \in H_{\mathrm{vir.dim.}}(X).$$
\end{thm*}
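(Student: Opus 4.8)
The plan is to reduce to the local structure of d-manifolds and to adapt the cosection-localized Euler class of Kiem--Li. First I would invoke Joyce's local structure theorem to cover $\xX$ by standard-model charts $\mathbf{S}_{V_i, E_i, s_i}$, where $V_i$ is a smooth manifold, $E_i \to V_i$ a vector bundle, $s_i$ a smooth section, and $X_i = s_i^{-1}(0) \subset X$. On such a chart the tangent complex restricted to $X_i$ is represented by $[\,TV_i \xrightarrow{\dd s_i} E_i\,]$ in degrees $0,1$, so that $h^1(\bT_\xX|_{X_i}) \cong \coker(\dd s_i)$ and $\mathrm{vir.dim.} = \dim V_i - \rk E_i$. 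The virtual class of the chart is represented by the homological localized Euler class $e(E_i, s_i) \in H_{\dim V_i - \rk E_i}(X_i)$, which is the local building block of $[\xX]\virt$.

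The second step is the cosection mechanism. The given $\sigma$ yields, on each chart, a surjection $\bar\sigma_i \colon E_i|_{X_i} \to \bR$ over $X_i \setminus X(\sigma)$, obtained by composing $E_i|_{X_i} \twoheadrightarrow \coker(\dd s_i) = h^1(\bT_\xX|_{X_i})$ with $\sigma$; by construction $\bar\sigma_i \circ \dd s_i = 0$. Extending to a smooth $\tilde\sigma_i \colon E_i \to \bR$ on a neighborhood of $X_i$, the relation $\bar\sigma_i \circ \dd s_i = 0$ together with $s_i|_{X_i} = 0$ forces $\tilde\sigma_i(s_i)$ to vanish to second order along $X_i$: writing $s_i = \dd s_i \cdot v + O(|v|^2)$ in a normal coordinate $v$ to $X_i$, one gets $\tilde\sigma_i(s_i) = O(|v|^2)$. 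Over $V_i \setminus X(\sigma)$, where $\tilde\sigma_i$ is surjective, $\ker\tilde\sigma_i$ is a corank-one subbundle and $E_i \cong \ker\tilde\sigma_i \oplus \bR$ via $\tilde\sigma_i$. Choosing a section $n_i$ with $\tilde\sigma_i(n_i) \equiv 1$ and perturbing $s_i \rightsquigarrow s_i' = s_i + \epsilon\, n_i$, the $\bR$-component of $s_i'$ is bounded below by $\epsilon - O(|v|^2)$, hence nonvanishing on a neighborhood of $X_i \setminus X(\sigma)$. Therefore $(s_i')^{-1}(0)$ is forced into an arbitrarily small neighborhood of $X(\sigma)$ --- this is precisely the Kiem--Li phenomenon that the Euler class of $E_i$ localizes to the degeneracy locus because the quotient line bundle $E_i/\ker\tilde\sigma_i$ is trivialized by $\sigma$.

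The third step is globalization. Using Joyce's theory of good coordinate systems and transverse (possibly branched) perturbations --- which guarantees that the local perturbations $s_i'$ can be chosen compatibly on overlaps while respecting the chosen orientation of $\xX$ --- I would assemble the $(s_i')^{-1}(0)$ into a single compact, oriented cycle $Z_\epsilon$, of dimension equal to the virtual dimension, supported in a prescribed neighborhood of $X(\sigma)$. Letting the neighborhoods shrink and passing to the limit in homology (using compactness of $X(\sigma)$ and continuity of singular homology) produces a class $[\xX]\virt_{\loc,\sigma} \in H_{\mathrm{vir.dim.}}(X(\sigma))$. Since the same family of cycles, viewed in $X$, represents the unlocalized virtual class, the pushforward identity $i_*[\xX]\virt_{\loc,\sigma} = [\xX]\virt$ follows; independence of the class from the choices of charts, extensions $\tilde\sigma_i$, sections $n_i$, and $\epsilon$ is checked by the usual bordism argument between two systems of choices.

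\textbf{The main obstacle} I expect is the global compatibility of the cosection-constrained perturbations: one must simultaneously satisfy Joyce's gluing conditions for the $s_i'$ across charts and the Kiem--Li requirement that each perturbation be nonvanishing in the $\tilde\sigma_i$-direction near $X_i \setminus X(\sigma)$, all while keeping the extensions $\tilde\sigma_i$ coherent on overlaps and the orientations consistent. Threading the cosection condition through Joyce's perturbation-and-bordism machinery, and verifying that the resulting class is well defined and natural, is the technical heart of the argument; the surjective case $X(\sigma) = \emptyset$ recovers the vanishing theorem and serves as a consistency check.
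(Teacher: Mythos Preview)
Your approach differs substantially from the paper's, and the difference is instructive.

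The paper exploits compactness of $\xX$ to invoke Joyce's Whitney embedding theorem and obtain a \emph{single global} presentation $\xX \simeq \sS_{Y,E,s}$ with $Y$ open in some $\bR^N$. This eliminates at a stroke the gluing problem you flag as the main obstacle: there are no charts to patch, no good coordinate system to assemble, and independence from choices reduces to comparing two global presentations (which is handled by a standard-form submersion argument). The paper then proceeds via the topological normal cone $C(s) \subset E|_X$ and the Borel--Moore intersection pairing. The key technical input is a cone reduction lemma: $C(s)$ lies entirely in $E(\sigma) = E|_{X(\sigma)} \cup \ker(\sigma')|_{U(\sigma)}$. Granting this, one chooses a small section $\check\sigma_Y$ of $E$ whose $\sigma'$-component is nonzero on $X\setminus V$ for a fixed retracting neighbourhood $V \supset X(\sigma)$, and sets $[\xX]_{\loc,\sigma}\virt := [C(s)] \cap [\xi]$, where $\xi$ is the graph of $\check\sigma_Y$; cone reduction forces the support of this intersection into $V$, and the ENR property (or, failing that, a shrinking-neighbourhood limit via Milnor continuity) identifies $H_*(V) \cong H_*(X(\sigma))$.

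Your perturbation scheme is morally the same Kiem--Li idea --- perturbing in the $\sigma$-direction is exactly what $\check\sigma_Y$ accomplishes --- but your second step has a genuine gap: the expansion ``$s_i = \dd s_i \cdot v + O(|v|^2)$ in a normal coordinate $v$ to $X_i$'' presupposes that $X_i$ is smooth enough to admit a tubular neighbourhood, which it generally is not. The correct replacement for this heuristic is precisely the cone reduction statement, whose proof in the paper tracks limits of rescaled graphs $t \cdot s$ as $t \to \infty$ and uses the relation $\sigma \circ \dd s = 0$ only along $X$, via an integration argument rather than a Taylor expansion normal to $X$. With cone reduction in hand, the Borel--Moore intersection replaces your transverse-perturbation-plus-bordism package, and the global presentation replaces your good-coordinate gluing --- so both hard parts of your plan are bypassed. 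What your route would buy, if carried through, is a construction not requiring a global Kuranishi chart; but for compact d-manifolds that generality is unnecessary.
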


In \cite{BorisovJoyce}, the virtual fundamental class of a $(-2)$-shifted symplectic derived scheme $(\unxX, \omega_{\unxX})$ is obtained by truncating $\unxX$ in a suitable way to produce an associated d-manifold $\xX_{dm}$ and then taking its virtual fundamental class. In this setting, using the above theorems gives the following vanishing and localization results for cosections on $\unxX$ that are non-degenerate with respect to the symplectic form, since we can show that such cosections induce cosections on $\xX_{dm}$ which are surjective on their non-degeneracy locus.

\begin{thm*}[Theorem~\ref{vanishing for non-degenerate cosection}, Theorem~\ref{cos loc for derived sch}]
Let $(\unxX, \omega_{\unxX})$ be a proper, oriented $(-2)$-shifted symplectic derived scheme with a cosection $\sigma \colon \bT_{\unxX} |_X [1] \to \oO_X$ (cf. Definition~\ref{cosection for derived schemes}). 

If $\sigma$ is non-degenerate on $\unxX$, then the virtual fundamental class vanishes, i.e. $[\xX_{dm}]\virt = 0 \in H_{\mathrm{vir.dim.}}(X)$.

More generally, the virtual fundamental class localizes to the degeneracy locus $X(\sigma)$ of $\sigma$, so that there exists a cosection localized class $[\xX]\virt_{\loc, \sigma} \in H_{\mathrm{vir.dim.}}(X(\sigma))$ satisfying
$$i_* [\xX]\virt_{\loc, \sigma} = [\xX]\virt \in H_{\mathrm{vir.dim.}}(X),$$
where $i \colon X(\sigma) \to X$ denotes the inclusion.
\end{thm*}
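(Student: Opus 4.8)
The plan is to reduce both assertions to the two real-cosection theorems already established, Theorem~\ref{vanishing for surjective real cosection} and Theorem~\ref{localization by real cosection}, by manufacturing from the complex cosection $\sigma$ on $\unxX$ an honest real cosection $\sigma_{dm}$ of the associated Borisov–Joyce d-manifold $\xX_{dm}$ whose non-surjectivity locus is contained in $X(\sigma)$. The vanishing statement is then the special case in which $\sigma$ is non-degenerate everywhere, so that $X(\sigma)=\emptyset$ and $\sigma_{dm}$ is surjective on all of $X$. First I record that the hypotheses transfer: since $(\unxX,\omega_{\unxX})$ is proper and oriented, the truncation $\xX_{dm}$ of \cite{BorisovJoyce} is a compact, oriented d-manifold, so the two real theorems apply to it as soon as a real cosection is produced.

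Second, I construct $\sigma_{dm}$. Recall from \cite{BorisovJoyce} that the $(-2)$-shifted symplectic form endows each obstruction space $V_x\defeq h^1(\bT_{\unxX}|_x)$ with a non-degenerate $\bC$-bilinear symmetric form $Q_x$, and that the truncation is built, after a choice of compatible Hermitian metrics and using the orientation, by selecting a maximal positive-definite real subbundle $O\subset h^1(\bT_{\unxX}|_X)$ for $\re Q$; the obstruction bundle $h^1(\bT_{\xX_{dm}}|_X)$ of the d-manifold is precisely $O$. Passing to cohomology sheaves (equivalently applying $h^0$ to the shift) turns $\sigma\colon \bT_{\unxX}|_X[1]\to \oO_X$ into a map $h^1(\bT_{\unxX}|_X)\to \oO_X$; restricting it to $O$ and composing with $\re\colon \bC\to\bR$ yields an $\bR$-linear bundle map $\sigma_{dm}\colon h^1(\bT_{\xX_{dm}}|_X)\to \bR_X$, i.e. a real cosection in the sense of Definition~\ref{cosection for d-manifold}. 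The work here is to check that this recipe is compatible with the Borisov–Joyce local models and their transition data, so that the pointwise functionals assemble into a continuous family over $X$; this is exactly where the coherence of the orientation and metric choices underlying $\xX_{dm}$ is used.

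Third — and this is the crux — I must show that $\sigma_{dm}$ is surjective at every point $x$ at which $\sigma$ is non-degenerate, i.e. that the non-surjectivity locus of $\sigma_{dm}$ is contained in $X(\sigma)$. This is a pointwise linear-algebra statement relating the non-degeneracy notion of Definition~\ref{cosection for derived schemes}, phrased through $Q_x$ (equivalently through the vector $\xi_x\in V_x$ dual to $\sigma_x$ under $Q_x$), to the non-vanishing of $\re(\sigma_x|_{O_x})$ on the positive subspace $O_x$. The heart of the matter is that non-degeneracy forces $\xi_x$ to have a nontrivial component along $O_x$, so that $\re(\sigma_x|_{O_x})\neq 0$; verifying that the definition of non-degeneracy is exactly strong enough to guarantee this, uniformly in the chart and metric data, is the main obstacle, and is the precise content of the phrase ``$\sigma$ non-degenerate with respect to $\omega_{\unxX}$'' matching surjectivity of the induced real cosection. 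Only this one containment of loci is needed; the reverse inclusion, which is sensitive to the choice of $O$, is never required.

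Finally I conclude. Applying Theorem~\ref{localization by real cosection} to $\sigma_{dm}$ produces a localized class $[\xX_{dm}]\virt_{\loc,\sigma_{dm}}\in H_{\mathrm{vir.dim.}}(X(\sigma_{dm}))$ with $j_*[\xX_{dm}]\virt_{\loc,\sigma_{dm}}=[\xX_{dm}]\virt$ for the inclusion $j\colon X(\sigma_{dm})\to X$; pushing forward along the inclusion $X(\sigma_{dm})\hookrightarrow X(\sigma)$, which is legitimate by the containment proved in the previous step, yields the desired class $[\xX]\virt_{\loc,\sigma}\in H_{\mathrm{vir.dim.}}(X(\sigma))$ satisfying $i_*[\xX]\virt_{\loc,\sigma}=[\xX]\virt$. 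When $\sigma$ is non-degenerate everywhere we have $X(\sigma)=\emptyset$, so $\sigma_{dm}$ is a surjective real cosection on all of $X$, and Theorem~\ref{vanishing for surjective real cosection} gives $[\xX_{dm}]\virt=0$.
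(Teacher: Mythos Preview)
Your overall strategy — manufacture a real cosection on $\xX_{dm}$ from $\sigma$ and then invoke the two real theorems — is exactly what the paper does. The gap is in the third step, the pointwise linear algebra. Your claim that non-degeneracy of $\sigma_x$ forces $\xi_x$ to have nontrivial component along an \emph{arbitrary} maximal positive subspace $O_x$ is false. Take $V_x=\bC$ with $Q_x(z,w)=zw$, so $\re Q_x(a+ib)=a^2-b^2$, and let $\sigma_x(z)=iz$. Then $\xi_x=i$ and $Q_x(\xi_x,\xi_x)=-1\neq 0$, so $\sigma$ is non-degenerate in the sense of Definition~\ref{cosection for derived schemes}; yet for the perfectly good maximal positive subspace $O_x=\bR\subset\bC$ one has $\re(\sigma_x|_{O_x})(a)=\re(ia)=0$. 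Thus your $\sigma_{dm}$ can vanish at non-degenerate points, and the containment $X(\sigma_{dm})\subset X(\sigma)$ fails.

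The paper avoids this by not fixing $\xX_{dm}$ in advance and then hoping $\sigma$ cooperates, but rather by \emph{adapting the choice of $q$-adapted subbundle $E^-$ to $\sigma$} on each Darboux chart. Concretely, using the non-degeneracy one splits $E=\oO_V\oplus\ker(\sigma_E)$ $q$-orthogonally, reparametrizes so that $q=\sum y_j^2$ with $y_1$ the coordinate on the $\oO_V$ summand, and then takes $E^-$ to be the imaginary parts in these coordinates. With this specific choice, $\sigma_E(E^-)$ lies in a fixed real line in $\bC$, so $\sigma_u$ descends to a surjection $\Ob(\unxX,u)/\im\Pi_u\to\bR$; this is the real cosection. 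The freedom to make such a choice is precisely what Proposition~\ref{virtual class is canonical} buys. (Relatedly, $\Ob_{\xX_{dm}}|_u$ is the \emph{quotient} $\Ob(\unxX,u)/\im\Pi_u$, not a subbundle; identifying it with $(\im\Pi_u)^\perp$ via $\re q_u$ is fine, but the induced map is then the one that \emph{descends} to the quotient, which is why the choice of $E^-$ matters.) For the localization statement the same adapted choice is made only on charts meeting $U(\sigma)$, yielding a surjective real cosection on an open sub-d-manifold as in Remark~\ref{meromorphic cosection remark}, after which Theorem-Definition~\ref{localization by real cosection} applies.
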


As a corollary, we obtain the vanishing of stable pair invariants of hyperk\"{a}hler fourfolds.

\begin{cor*}[Theorem~\ref{vanishing of PT4}, {\cite[Claim~2.19]{CaoMaulikToda}}]
Let $P_n(W, \beta)$ be the moduli space parameterizing stable pairs $I = [\oO_W \xrightarrow{s} F]$ on a hyperk\"{a}hler fourfold $W$ satisfying $[F] = \beta \in H_2(W), \chi(F) = n$. If $\beta \neq 0$ or $n \neq 0$, then the virtual fundamental class is zero, i.e. $[P_n(W, \beta)]\virt = 0$.
\end{cor*}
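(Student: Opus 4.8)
The plan is to realize $P_n(W,\beta)$ as the classical truncation of a proper, oriented $(-2)$-shifted symplectic derived scheme equipped with a holomorphic cosection that is non-degenerate at every point, and then to invoke Theorem~\ref{vanishing for non-degenerate cosection} directly.

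First I would set up the derived geometry. Since $W$ is a holomorphic symplectic, hence Calabi-Yau, fourfold, the derived moduli scheme $\unxX$ parameterizing stable pairs $I=[\oO_W\xrightarrow{s}F]$ carries a $(-2)$-shifted symplectic form by \cite{PTVV}, with tangent complex $\bT_{\unxX}\simeq\RHom_{\pi}(\bI,\bI)_0[1]$ for the universal pair $\bI$ on $\unxX\times W$. At a closed point $[I]$ one has $h^1(\bT_{\unxX}|_{[I]})=\Ext^2(I,I)_0$, and the $(-2)$-shifted symplectic pairing restricts there to the Serre duality pairing $\Ext^2(I,I)_0\times\Ext^2(I,I)_0\to H^4(\oO_W)\cong\bC$ coming from the trivialization $K_W\cong\oO_W$. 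The moduli scheme is projective, hence $\unxX$ is proper, and an orientation is provided by \cite{CaoGrossJoyceOrient}, so the hypotheses of Theorem~\ref{vanishing for non-degenerate cosection} are in place once a cosection is produced.

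Next I would construct the cosection $\sigma\colon\bT_{\unxX}|_X[1]\to\oO_X$ (in the sense of Definition~\ref{cosection for derived schemes}) from the holomorphic symplectic form $\theta\in H^0(W,\Omega^2_W)$, following Cao-Maulik-Toda \cite{CaoMaulikToda}. Using the symplectic isomorphism $\Omega^1_W\cong\bT_W$ to view $\theta$ as a bivector $\theta^{\sharp}\in H^0(W,\wedge^2\bT_W)$, contraction against the square of the Atiyah class of $\bI$ yields a distinguished section $v_\theta=\langle\At(\bI)^2,\theta^{\sharp}\rangle$ of $h^1(\bT_{\unxX})$, and Serre duality against $v_\theta$ defines $\sigma$. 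I would also check that $\sigma$ is a legitimate cosection in the analytic framework of this paper and that its degeneracy locus coincides with the vanishing locus of the section $v_\theta$.

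The hard part will be verifying that $\sigma$ is non-degenerate on all of $\unxX$, i.e. that $v_\theta\neq 0$ in $\Ext^2(I,I)_0$ at every closed point $[I]$ under the hypothesis $(\beta,n)\neq(0,0)$; this is exactly what upgrades a localization statement to outright vanishing, so the crux is to show that the degeneracy locus is \emph{empty} rather than merely of positive codimension. I would reduce this pointwise non-vanishing to a Hodge-theoretic pairing assembled from $\theta$ and $\ch(I)$, and argue as follows: the condition $(\beta,n)\neq(0,0)$ forces $F\neq 0$, hence $\At(\bI)\neq 0$ carries nontrivial information along the fourfold, while $\theta$ is nowhere-degenerate as a two-form on $W$; combining these keeps $\langle\At(\bI)^2,\theta^{\sharp}\rangle$ nonzero at each point. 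The remaining delicate point is to match the analytic cosection localization set up here with the algebraic $\theta$-induced obstruction calculus of \cite{CaoMaulikToda}, after which Theorem~\ref{vanishing for non-degenerate cosection} applies and gives $[P_n(W,\beta)]\virt=[\xX_{dm}]\virt=0$.
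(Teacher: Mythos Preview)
Your overall strategy matches the paper's, but there is a genuine gap in the verification of non-degeneracy. You identify non-degeneracy of $\sigma$ (Definition~\ref{cosection for derived schemes}) with the non-vanishing $v_\theta\neq 0$, i.e.\ with surjectivity of $\sigma_I$. Non-degeneracy is strictly stronger: it requires the Serre duality form $q_I^\vee$ to be non-degenerate on $\im\sigma_I^\vee\subset\Ext^2(I,I)_0^\vee$. In fact, when $n=0$ (equivalently $\ch_4(I)=0$) the cosection $\sigma$ you construct from $\At(\bI)^2$ and $\theta$ is surjective but \emph{isotropic}: the computation in \cite{CaoMaulikToda,CaoMaulikToda2} shows that on the span of $\sigma_I^\vee$ and the dual of a second cosection $\tau_I$ the Serre pairing has matrix $\left(\begin{smallmatrix}0&2\\2&0\end{smallmatrix}\right)$, so $q_I^\vee$ vanishes on $\im\sigma_I^\vee$ alone. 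Hence Theorem~\ref{vanishing for non-degenerate cosection} does not apply to $\sigma$ by itself in the case $n=0$, $\beta\neq 0$, and the heuristic ``$\At(\bI)\neq 0$ and $\theta$ is nowhere degenerate, so the contraction stays nonzero'' cannot detect or repair this isotropy.

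The paper's remedy is precisely to introduce the second cosection $\tau$, built from $\At(\bI)$ and the Kodaira--Spencer class dual to $\ch_3(I)=\beta$, and to use the sum $\sigma+\tau$: on $\im(\sigma_I+\tau_I)^\vee$ the pairing now evaluates to $4\neq 0$, so $\sigma+\tau$ is non-degenerate whenever $\beta\neq 0$. With this additional cosection in hand, Theorem~\ref{vanishing for non-degenerate cosection} applies in both regimes ($n\neq 0$ via $\sigma$; $n=0$, $\beta\neq 0$ via $\sigma+\tau$), which is exactly how the paper concludes.
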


Finally, when a derived manifold $\xX$ and a cosection $\sigma$ satisfy conditions bringing them closer to being complex analytic or obtained from analytic or algebraic data, we show that can localize the virtual fundamental class by the cosection in a more explicit way of algebraic flavor (Theorem-Definition~\ref{localization by complex cosection}).

\subsection*{Layout of the paper} In \S2, we provide necessary background on d-manifolds, $(-2)$-shifted symplectic schemes and their relationship. \S3 contains material on the topological definition of normal cones and the virtual fundamental class of a d-manifold. In \S4 we establish vanishing results for surjective real cosections and in \S5 we treat the general case of real cosections making use of the results of \S4, especially for their application in derived symplectic algebraic geometry. In \S6 we study complex cosections coming from geometry in more detail. Finally, in \S7 we obtain the vanishing of stable pair invariants of hyperk\"{a}hler fourfolds.

\subsection*{Acknowledgements} The author would like to greatly thank Dominic Joyce, Young-Hoon Kiem and Richard Thomas for helpful conversations related to this work. He would also like to thank the anonymous referee for their comments and suggestions for improvement.

\subsection*{Notation and conventions} 
For topological purposes, we work with Borel-Moore homology and singular (co)homology with integer coefficients. These are equal for compact spaces and afford us the necessary flexibility to work with fundamental classes of (non)compact spaces. We refer the reader to \cite{Bredon, Iversen} for a comprehensive treatment. 

We work with the theory of $\Coo$-schemes and derived manifolds developed by Joyce \cite{JoyceDMan}. In analogy to usual smooth manifolds, our $\Coo$-schemes will be separated, second countable and locally fair, meaning that their underlying topological space is Hausdorff, second countable and they are locally modelled by the spectrum of a finitely generated $\Coo$-ring. By definition, such $\Coo$-schemes can be the underlying scheme of a d-manifold. Unless otherwise stated, the underlying topological spaces of $\Coo$-schemes and d-manifolds are assumed to be Euclidean Neighbourhood Retracts (ENR) for simplicity. This is automatically the case for the complex analytic spaces or algebraic schemes appearing throughout the paper.

All algebraic and derived schemes are defined over the field of complex numbers $\bC$. Classical and derived algebraic schemes and complex analytic spaces are separated and of finite type. The reader can consult \cite{Toen} for more general background on derived algebraic geometry and shifted symplectic structures or \cite{BorisovJoyce} for the necessary material for our applications.

$\unxX$ will typically denote a derived scheme and $\omega_{\unxX}$ a $(-2)$-shifted symplectic form on $\unxX$. $\xX$ will typically denote a d-manifold and $X_{\Coo}$ its underlying $\Coo$-scheme. $X$ will always denote the underlying topological space of $\unxX$ or $\xX$ and sometimes, by abuse of notation, the classical truncation of $\unxX$ as well. In that case, $X^{an}$ denotes the complex analytic space associated to classical truncation $X$ of $\unxX$. These distinctions will be explicitly stated or should be clear from context throughout the paper. In general, we typically use the notation $X_{\Coo}$ when we need to use the existence of quasicoherent sheaves on a $\Coo$-scheme, whereas we only consider homology groups of the underlying topological spaces.

\section{Background on D-Manifolds and $(-2)$-Shifted Symplectic Derived Schemes} \label{Background section}

In this section, we collect some requisite background and introduce terminology that will be used in the rest of the paper.

\subsection{$\cC^\infty$-schemes and d-manifolds}  The main references for the theory of $\Coo$-schemes and d-manifolds are the books \cite{JoyceCoo} and \cite{JoyceDMan} by Joyce. Here we give a brief informal account of their structure.

Roughly speaking, under the conventions of this paper, a $\Coo$-scheme is a locally ringed space that is locally isomorphic to the (real) spectrum of the $\Coo$-ring $\Coo(\bR^n) / I$, where $I$ is an ideal of smooth functions on $\bR^n$. There is a theory of (quasi)coherent sheaves and vector bundles on $\Coo$-schemes, which resembles that of sheaves on algebraic schemes.

D-manifolds form a theory of derived differential geometry being analogous to quasi-smooth derived or dg-schemes. Their local structure is given as follows.

\begin{defi} \emph{\cite[Example~1.4.4]{JoyceDMan}}
A \emph{principal d-manifold} $\xX = \sS_{Y,E,s}$ is determined by the data of a smooth manifold $Y$, a smooth vector bundle $E \to Y$ and $s \in \cC^\infty (E)$ as follows: Its underlying $\Coo$-scheme $X_{\Coo}$ is the affine scheme obtained as the spectrum of the $\Coo$-ring $\cC^\infty(Y)/I$, where $I=(s)$ is the ideal generated by the section $s$. The higher data are given by the morphism $E^\vee |_{X_{\Coo}} \xrightarrow{s^\vee} I / I^2$ of $\oO_{X_{\Coo}}$-modules, so that the underlying topological space $X$ is the zero locus of the section $s$. The tangent complex (or virtual tangent bundle) of $\xX$ is the two-term complex
$$\bT_{\xX} = [ T_Y |_{X_{\Coo}} \xrightarrow{\dd s} E|_{X_{\Coo}}]$$
where for any choice of connection $\nabla$ on $E^\vee$, $\dd s$ is the restriction to $X_{\Coo}$ of $\nabla s \colon T_Y \to E$.
\end{defi}

In general, every d-manifold $\xX$ is locally equivalent to a principal d-manifold. When $\xX$ is compact (i.e. its underlying topological space is compact), Joyce proves a Whitney embedding theorem, which in particular implies that this is true globally. Since we will be fundamentally interested in compact d-manifolds in this paper, this allows us in practice to consider the case of principal d-manifolds, which are a considerable simplification.

\begin{thm-defi} \emph{\cite[Theorems~4.29, 4.34]{JoyceDMan}} \label{presentation as a d-manifold}
Let $\xX$ be a compact d-manifold. Then $\xX \simeq \sS_{Y,E,s}$ where $Y$ is an open subset of some Euclidean space $\bR^N$. We refer to this equivalence as a presentation of $\xX$ as a principal d-manifold. The virtual dimension of $\xX$ is then equal to $N - \rk E$.
\end{thm-defi}

In analogy with the case of quasi-smooth derived schemes or schemes with a perfect obstruction theory \cite{BehFan}, we give the following definition.

\begin{defi}
The \emph{obstruction sheaf} of $\xX$ is the coherent sheaf $\Ob_{\xX} := h^1(\mathbb{T}_\xX) \in \Coh(X_{\cC^\infty})$, where $\mathbb{T}_\xX$ is the tangent complex of $\xX$.
\end{defi}

Finally we mention that there is a notion of orientation for a d-manifold $\xX$, which in the case of a principal d-manifold $\sS_{Y,E,s}$ roughly amounts to an orientation of the determinant line bundle $\det \bT_\xX$. As in the case of manifolds, compact, oriented d-manifolds admit a virtual fundamental class.

\begin{thm} \emph{\cite[Section~13]{JoyceDMan}}
Let $\xX$ be a compact, oriented d-manifold with underlying topological space $X$. Then there exists a virtual fundamental class $[\xX]\virt \in H_{\mathrm{vir.dim.}}(X)$. Moreover, $[\xX]\virt$ only depends on the bordism class of $\xX$.
\end{thm}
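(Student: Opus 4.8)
The plan is to reduce everything to a global principal presentation and then construct the class by a topological version of the Behrend--Fantechi normal-cone formalism. By Theorem-Definition~\ref{presentation as a d-manifold}, compactness of $\xX$ lets me fix an equivalence $\xX \simeq \sS_{Y,E,s}$ with $Y \subset \bR^N$ open, $E \to Y$ a smooth vector bundle of rank $r = N - \mathrm{vir.dim.}$, and $X = s^{-1}(0)$ the underlying (compact) space. First I would record how the orientation enters: since $\bT_\xX = [T_Y|_X \to E|_X]$, one has $\det \bT_\xX \cong \det(T_Y|_X) \otimes \det(E|_X)^\vee$, so, using the standard orientation of $Y \subset \bR^N$, an orientation of $\det \bT_\xX$ is the same datum as an orientation of the bundle $E$. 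This is exactly what is needed to produce a Thom class and a Gysin map for $E$.

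\textbf{Construction of the class.} Via the surjection $E^\vee|_X \xrightarrow{s^\vee} I/I^2$, the normal cone $C_{X/Y}$ sits as a closed subset of the total space of $E|_X$, of pure real dimension $N$. Deformation to the normal cone yields, through the specialization map on Borel--Moore homology, a fundamental class $[C_{X/Y}] \in H^{\mathrm{BM}}_N(E|_X)$ inherited from the oriented manifold $Y$. I would then set
\[
[\xX]\virt := 0^!_{E|_X}\,[C_{X/Y}] \in H^{\mathrm{BM}}_{N-r}(X),
\]
where $0^!_{E|_X}$ is the refined Gysin map intersecting with the zero section of the oriented rank-$r$ bundle $E|_X$. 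As $N - r = \mathrm{vir.dim.}$ and $X$ is compact, $H^{\mathrm{BM}}_{N-r}(X) = H_{\mathrm{vir.dim.}}(X)$, giving the desired class. Equivalently, one may perturb $s$ to a section $\tilde s$ transverse to the zero section and agreeing with $s$ off a compact set, take the fundamental class of the compact oriented manifold $(\tilde s)^{-1}(0)$, and push it into $H_\ast(X)$ using that $X$ is an ENR and hence a neighbourhood deformation retract.

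\textbf{Well-definedness.} The substance of the argument is independence of the chosen presentation. Any two principal presentations of $\xX$ are related by a chain of stabilizations $(Y,E,s) \rightsquigarrow (Y \times \bR^k,\, E \boxplus \bR^k,\, s \oplus \mathrm{pr})$ together with equivalences; I would verify that stabilization leaves $X$ and the cone construction unchanged up to the canonical Thom isomorphism, and that an equivalence identifies the normal cones compatibly with the induced orientations, so that the class is invariant. In the perturbation picture the parallel statement is that two transverse perturbations are joined by a generic homotopy whose zero locus is an oriented bordism. I expect this comparison --- reconciling the different ambient $Y$'s and controlling non-compactness so that everything lands canonically in $H_\ast(X)$ rather than in an ambient manifold --- to be the main obstacle; the ENR hypothesis and the Borel--Moore formalism are precisely what make the passage to the limit over shrinking neighbourhoods of $X$ well-behaved.

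\textbf{Bordism invariance.} Finally, given a compact oriented d-bordism $\wW$ with $\partial \wW \simeq \xX_0 \sqcup \overline{\xX_1}$, I would present $\wW$ as a principal d-manifold with boundary and run the same construction relatively, choosing the defining section (or its transverse perturbation) to restrict on the boundary to the data for $\xX_0$ and $\xX_1$. The resulting relative class exhibits an equality $j_{0\ast}[\xX_0]\virt = j_{1\ast}[\xX_1]\virt$ inside $H_\ast(W)$, which is exactly the assertion that $[\xX]\virt$ depends only on the bordism class of $\xX$.
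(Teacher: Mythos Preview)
The paper does not prove this statement: it is quoted as a background result from \cite[Section~13]{JoyceDMan}, and both well-definedness and bordism invariance are deferred to Joyce. What the paper \emph{does} do, in Subsection~\ref{virtual fundamental cycle subsection}, is give two constructions of the class for a fixed principal presentation --- the ``J-virtual'' class $(f_*)^{-1} f'_*[X']$ obtained by perturbing $s$ to a transverse section $s'$, and the ``BF-virtual'' class $0^!_{E|_X}[C(s)]$ using Siebert's topological normal cone --- and then verify that these two agree. Your construction is precisely the BF-definition, and your ``equivalently'' clause is precisely the J-definition, so at the level of producing the class you are in exact agreement with the paper.

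Your sketch goes further than the paper by outlining independence of presentation and bordism invariance, which the paper simply cites from Joyce. Two remarks on the details. First, when the paper does need to compare two principal presentations (e.g.\ in the proof of Proposition~\ref{prop 5.2} for the localized class), it does not use stabilizations $(Y,E,s) \rightsquigarrow (Y\times\bR^k, E\boxplus\bR^k, s\oplus \mathrm{pr})$ but rather Joyce's standard form for an equivalence: a submersion $f \colon Y_1 \to Y_2$ together with a surjective bundle map $\hat f \colon E_1 \to f^*E_2$ satisfying $\hat f \circ s_1 = f^*s_2$. This is the bookkeeping that actually comes out of \cite{JoyceDMan} and makes the comparison of cones clean. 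Second, in the purely $C^\infty$ setting there is no algebraic normal cone $C_{X/Y} = \Spec \bigoplus I^n/I^{n+1}$; the paper uses Siebert's topological cone $C(s) = \lim_{t\to\infty} \Gamma_{t\cdot s}$ and its homological class $[C(s)]$ (Theorem-Definitions~\ref{topological intrinsic normal cone definition} and~\ref{homological intrinsic normal cone}), so your formula should be read with $C(s)$ in place of $C_{X/Y}$.
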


\subsection{$(-2)$-shifted symplectic derived schemes} In this subsection, we establish some terminology and describe local structure results that will be used in the rest of the paper. For an introduction to derived algebraic geometry, we refer the reader to \cite{Toen}. Shifted symplectic structures were introduced in the seminal paper \cite{PTVV}.

In a nutshell, a derived scheme $\unxX$ is locally modelled by the (derived) spectrum $\dspec A$ where $A$ is a commutative differential (negatively) graded $\bC$-algebra. A $(-2)$-shifted symplectic structure on $\unxX$ is given by a 2-form
\begin{equation*}
    \omega_{\unxX} \colon \bT_{\unxX} \wedge \bT_{\unxX} \lr \oO_{\unxX}[-2]
\end{equation*}
which is non-degenerate and is equipped with extra data that make it closed.

A $(-2)$-shifted symplectic scheme is a pair $(\unxX, \omega_{\unxX})$ consisting of a derived scheme $\unxX$ and a symplectic structure on it. There is also an appropriate notion of orientation on a $(-2)$-shifted symplectic scheme (cf. \cite{BorisovJoyce}). Given the above, we introduce the following terminology.

\begin{defi}
The obstruction sheaf of $\unxX$ is the sheaf $\Ob_{\unxX} = h^1(\mathbb{T}_{\unxX} |_{X})$. The obstruction space of $\underline{\xX}$ at a point $x \in \underline{\xX}$ is the vector space $\Ob(\underline{\xX}, x) = h^1(\mathbb{T}_{\underline{\xX}}|_x)$.
\end{defi}

In particular, the symplectic form $\omega_{\underline{\xX}}$ induces a family of non-degenerate quadratic forms $q_x \colon \Ob(\unxX, x) \to \bC$.

We conclude this subsection with a local description of $(-2)$-shifted symplectic schemes that will be essential later on. In \cite{JoyceSch}, the local structure of $(\unxX, \omega_{\unxX})$ around every point $x \in X$ is described by charts in Darboux form as follows.

\begin{thm-defi} \emph{\cite[Example~5.16]{JoyceSch}} \label{Darboux chart}
Let $x \in \unxX$ be a point of a $(-2)$-shifted symplectic derived scheme $(\unxX, \omega_{\unxX})$. Then there exists a Zariski open neighbourhood $\underline{U} = \dspec A \to \unxX$ around $x$ such that:
\begin{enumerate}
    \item $(A, \delta)$ is a commutative differential graded algebra, whose degree zero part $A^0$ is a smooth $\bC$-algebra of dimension $m$, with a set of \'{e}tale coordinates $\{ x_i \}_{i=1}^m$. 
    \item $A$ is freely generated over $A^0$ by variables $\{ y_j \}_{j=1}^n$ and $\{ z_i \}_{i=1}^m$ in degrees $-1$ and $-2$ respectively.
    \item Let $\omega_A$ be the pullback of $\omega_{\unxX}$ to $\dspec A$. There exist invertible elements $q_1, ..., q_n \in A^0$ such that
    \begin{equation*}
        \omega_A = \sum_{i=1}^m \dd z_i \dd x_i + \sum_{j=1}^n \dd (q_j y_j) \dd y_j .
    \end{equation*}
    \item The differential $\delta$ is determined by the equations
    \begin{align*}
        \delta x_i = 0, \quad \delta y_j = s_j, \quad \delta z_i = \sum_{j=1}^n y_j \left( 2q_j \frac{\partial s_j}{\partial x_i} + s_j \frac{\partial q_j}{\partial x_i} \right)
    \end{align*}
    where the elements $s_j \in A^0$ satisfy
    \begin{equation*}
        q_1 s_1^2 + ... + q_n s_n^2 = 0.
    \end{equation*}
\end{enumerate}

We let $V = \Spec A^0$, $E$ the trivial vector bundle of rank $n$ whose dual has basis given by the variables $y_j$ and $F$ the trivial vector bundle whose dual has basis given by the variables $z_i$. We refer to all of the above data as a \emph{derived algebraic Darboux chart} for $\unxX$ at $x$.

At the classical level, the truncation $U = h^0(\underline{U}) \sub X$ is equipped with the following data:
\begin{enumerate}
    \item A smooth affine scheme $V$ of dimension $m$.
    \item A trivial vector bundle $E$ on $V$ of rank $n$ with a non-degenerate quadratic form $q$ which is given by
    \begin{align*}
        q_u(y_1, ..., y_n) = q_1(u) y_1^2 + ... + q_n(u) y_n^2
    \end{align*}
    on each fiber of $E$ over $u \in U$.
    \item A $q$-isotropic section $s \in \Gamma(E)$ whose scheme-theoretic zero locus is $U \sub V$.
    \item A three-term perfect complex of amplitude $[0,2]$
    \begin{align*}
        G := \bT_{\underline{U}}|_U = [T_V|_U \xrightarrow{ds} E|_U \xrightarrow{t} F|_U].
    \end{align*}
    such that $q_u$ gives a non-degenerate quadratic form on each obstruction space $h^1(G|_u) = \Ob(\unxX, u)$.
\end{enumerate}

We refer to the data $(V,E,G,q,s)$ as an \emph{algebraic Darboux chart} for $X$ at $x$.

By working in the complex analytic category, since the functions $q_j$ are non-zero at $x$, by possibly shrinking $V$ around $x$, they admit square roots $q_j = r_j^2$ and after re-parametrizing $y_j \mapsto \frac{y_j}{r_j}$ we may assume that $q_j(u) = 1$. We then refer to the data $(V,E,G,q,s)$ as an \emph{analytic Darboux chart} for $X$ at $x$.
\end{thm-defi}

Different Darboux charts glue and thus are compatible through appropriate data of homotopical nature. These compatibilities are important, however we will not need them explicitly. Knowing that they exist and that local constructions play well with them (which is ensured by the results we quote) will be sufficient for our purposes.

\subsection{From $(-2)$-shifted symplectic schemes to d-manifolds} \label{subsection 2.3} Consider a $(-2)$-shifted symplectic derived scheme $(\underline{\xX}, \omega_{\unxX})$. Let $X = h^0(\unxX)$ be its classical truncation and, by abuse of notation, we write $X$ for the underlying topological space as well.

In this subsection, we briefly recall the main results of \cite{BorisovJoyce}, where the authors produce a (non-uniquely determined) d-manifold $\xX_{dm}$ associated to $(\underline{\xX}, \omega_{\unxX})$ and thus a virtual fundamental class $[\xX_{dm}]\virt \in H_*(X)$.

In order to construct a d-manifold starting with a $(-2)$-shifted symplectic derived scheme, it is necessary to take appropriate (real smooth) truncations of Darboux charts.

\begin{defi} \emph{\cite[Definition~3.6]{BorisovJoyce}} \label{q-adapted definition} Let $(V,E,G,q,s)$ be data of an analytic Darboux chart for $X$ at the point $x$. We say that a real subbundle $E^- \sub E$ is \emph{$q$-adapted} if for all $u \in U$ the following conditions are met:
\begin{enumerate}
    \item $\im(ds|_u) \cap E^- |_u = \{ 0 \} \sub E|_u$.
    \item $t|_u (E^-|_u) = t|_u(E|_u) \sub F|_u$.
    \item The image of the linear map 
    $$\Pi_u \colon \ker(t|_u) \cap E^-|_u \lr \ker(t|_u) \lr h^1(G|_u) = \Ob(\unxX,u)$$
    is a real maximal negative definite vector subspace of $\Ob(\unxX,u)$ with respect to the quadratic form $\re q_u$.
\end{enumerate}
\end{defi}

Here is a (very) brief outline of the steps involved in the construction of the d-manifold $\xX_{dm}$.

\begin{constr} \label{construction of d-man from derived scheme} \text{ } 
\begin{enumerate}
    \item $(\unxX, \omega_{\unxX})$ admits a cover by derived algebraic Darboux charts, which are compatible in a suitable homotopy-theoretic sense.
    \item The classical truncation of these charts gives a cover of $X$ by analytic Darboux charts.
    \item For each such chart $(V,E,G,q,s)$ at a point $x \in X$, up to possible shrinking of $V$ around $x$, there exists a $q$-adapted $E^- \sub E$. Fix any such choice and let $E^+ = E / E^-$ and $s^+ = s \text{ mod } E^-$.
    \item Using the compatibility data between the derived algebraic Darboux charts, the principal d-manifolds $\sS_{V,E^+,s^+}$ glue to a d-manifold $\xX_{dm}$ with underlying topological space $X^{an}$.
\end{enumerate}
\end{constr}

Moreover, orientations of $(\unxX, \omega_{\unxX})$ are in bijection with orientations of $\xX_{dm}$. Thus a proper, oriented $(\unxX, \omega_{\unxX})$ gives rise to a compact, oriented d-manifold $\xX_{dm}$ and a virtual fundamental class $[\xX_{dm}]\virt \in H_*(X)$.

A priori, depending on the choices of $q$-adapted subbundles $E^- \sub E$, there are many possibilities for the d-manifold $\xX_{dm}$ produced by this process. However, it is shown in \cite{BorisovJoyce} that all such d-manifolds have the same bordism class and therefore must have the same virtual fundamental class. We record this in the following proposition.

\begin{prop} \emph{\cite[Corollary~3.19]{BorisovJoyce}} \label{virtual class is canonical}
The virtual fundamental class $[\xX_{dm}]\virt \in H_*(X)$ associated to a proper, oriented $(-2)$-shifted symplectic derived scheme $(\unxX, \omega_{\unxX})$ is independent of any choices involved in the construction of the d-manifold $\xX_{dm}$ and thus canonical.
\end{prop}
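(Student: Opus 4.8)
The strategy is to reduce the statement to the bordism invariance of the virtual fundamental class recorded above, namely that $[\xX]\virt$ depends only on the bordism class of a compact oriented d-manifold $\xX$. It then suffices to prove that any two d-manifolds $\xX_{dm}$ and $\xX_{dm}'$ obtained from Construction~\ref{construction of d-man from derived scheme} via different choices are bordant, through a compact oriented d-manifold with boundary that restricts to $\xX_{dm}$ and $\xX_{dm}'$ on its two boundary components and carries the constant map to $X$.

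The choices involved are the cover by derived algebraic Darboux charts, the $q$-adapted real subbundles $E^- \subset E$ on each chart, and the homotopy-coherent gluing data. First I would pass to a common refinement of any two given covers, so that it is enough to compare the d-manifolds attached to a single fixed cover but with differing choices of $E^-$. Recall that on each chart $E^-$ determines the principal d-manifold $\sS_{V, E^+, s^+}$ with $E^+ = E/E^-$ and $s^+ = s \bmod E^-$.

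The core of the proof is a path-interpolation. Given two $q$-adapted subbundles $E^-_0, E^-_1 \subset E$, I would produce a smooth family $E^-_t$, $t \in [0,1]$, of real subbundles of $E$ that is $q$-adapted for all $t$ and joins $E^-_0$ to $E^-_1$. The existence of such a path hinges on connectedness of the space of admissible choices: condition~(3) of Definition~\ref{q-adapted definition} asks that $\Pi_u(\ker(t|_u) \cap E^-|_u)$ be a maximal negative definite subspace of $(\Ob(\unxX, u), \re q_u)$, and for a nondegenerate real quadratic form the space of maximal negative definite subspaces is contractible (identifying with a Riemannian symmetric space of noncompact type), hence path-connected fiberwise over $U$; conditions~(1) and~(2) are open and are preserved along a carefully chosen path, subdividing $[0,1]$ and perturbing where necessary. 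The family $E^-_t$ then yields over $V \times [0,1]$ a principal d-manifold with boundary $\sS_{V \times [0,1], E^+_t, s^+_t}$ whose two boundary components are precisely $\sS_{V, E^+_0, s^+_0}$ and $\sS_{V, E^+_1, s^+_1}$; equipped with the induced orientation and the constant projection to $X$, it furnishes the local bordism.

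The main obstacle is the global assembly: the local bordisms must be glued into a single compact oriented d-manifold with boundary using the same homotopy-coherent data that glued the $\sS_{V, E^+, s^+}$ into $\xX_{dm}$, which requires the interpolating choices to be mutually compatible on overlaps. Here contractibility of the space of $q$-adapted subbundles does double duty: it not only gives fiberwise path-connectedness but shows that these choices form a sheaf of contractible spaces, so the interpolations can be chosen coherently and glued. With the oriented bordism $\xX_{dm} \sqcup (-\xX_{dm}')$ constructed, bordism invariance yields $[\xX_{dm}]\virt = [\xX_{dm}']\virt$; compatibility of the orientation identifications follows from the stated bijection between orientations of $(\unxX, \omega_{\unxX})$ and of $\xX_{dm}$.
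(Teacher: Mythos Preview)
The paper does not give its own proof of this proposition: it is simply quoted from \cite[Corollary~3.19]{BorisovJoyce}, with the sentence just before it recording the key input, namely that all d-manifolds $\xX_{dm}$ produced by Construction~\ref{construction of d-man from derived scheme} lie in the same bordism class, so that bordism invariance of $[\xX]\virt$ gives the conclusion. Your proposal is precisely this strategy, so in that sense it agrees with the paper.

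As a sketch of the Borisov--Joyce argument your outline is on the right track, but a couple of points are stated more optimistically than the actual proof allows. First, two $q$-adapted subbundles $E_0^-, E_1^-$ need not be joined by a path of $q$-adapted subbundles on the same chart: conditions (1) and (2) in Definition~\ref{q-adapted definition} are not merely open but also constrain the rank of $E^-$ relative to $\im(ds)$ and $\ker(t)$ pointwise, and the contractibility you invoke concerns only the induced maximal negative definite subspaces in $\Ob(\unxX,u)$, not the full subbundles of $E$. In \cite{BorisovJoyce} one does not interpolate directly between $E_0^-$ and $E_1^-$ but rather compares the resulting principal d-manifolds through intermediate stabilizations and equivalences, which is where the serious work lies. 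Second, the claim that the local interpolations glue because ``the choices form a sheaf of contractible spaces'' is not something you can read off from Definition~\ref{q-adapted definition} alone; the global gluing in \cite{BorisovJoyce} uses their full homotopy-coherent comparison data between Darboux charts and is the most delicate part of the argument. So your proposal identifies the correct architecture (bordism via a one-parameter family), but the two steps you flag as routine are exactly the substantive content of the cited result.
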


\section{Normal Cones and Virtual Fundamental Classes} \label{Normal Cones Section}

In this short section, we first briefly recall a topological version of the algebraic intrinsic normal cone \cite{BehFan} and its basic properties, following \cite[Section~3]{Siebert}. We then use the normal cone to give an alternative definition of the virtual fundamental class of a compact, oriented d-manifold $\xX$, which is equivalent to the one given in \cite{JoyceDMan}.

\subsection{Topological and homological normal cone} Let $Y$ be an oriented, topological manifold of dimension $N$ and $E$ a topological vector bundle on $Y$ with projection map $q \colon E \to Y$. Let furthermore $s$ be a continuous section of $E$ with zero locus $X = Z(s) \sub Y$.

For any $\ell > 0$, let $p_\ell \colon Y \times \bR^\ell \to Y$ be the projection and define a section $s_\ell \in \Gamma \left( Y \times (\bR^\ell \backslash \lbrace 0 \rbrace), p_\ell^* E \right)$ by the formula
\begin{align*}
    s_\ell (y, v) = |v|^{-1} \cdot s(y).
\end{align*}

Let $\overline{\Gamma}_{s_\ell} \sub p_\ell^* E$ denote the closure of the graph of $s_\ell$ in $p_\ell^* E$.

\begin{thm-defi} \cite[Definition~3.1, Proposition~3.2]{Siebert}\label{topological intrinsic normal cone definition}
The \emph{topological normal cone} $C(s) \sub E$ associated to $s$ is defined as
\begin{align} \label{formula of top intr normal cone}
    C(s) = \overline{\Gamma}_{s_\ell} \cap (E \times \lbrace 0 \rbrace).
\end{align}
This is independent of $\ell$ and satisfies:
\begin{enumerate}
    \item $C(s) = \lim_{t \to \infty} \Gamma_{t \cdot s}$, where $C(s)$ and $\Gamma_{t \cdot s}$ are considered as closed subsets of $E$.
    \item $C(s)$ lies over $X$ and $q(C(s)) = X$.
\end{enumerate}
\end{thm-defi}

The long exact sequence in homology gives 
\begin{align*}
    H_{N+\ell}(C(s)) \lr H_{N+\ell}(\overline{\Gamma}_{s_\ell}) \lr H_{N+\ell}({\Gamma}_{s_\ell}) \lr H_{N+\ell-1}(C(s)) 
\end{align*} 
and therefore for large enough $\ell$ the two outer homology groups vanish and the homology class $$(s_\ell)_* [Y \times (\bR^\ell \backslash \lbrace 0 \rbrace)] \in H_{N+\ell}({\Gamma}_{s_\ell})$$ 
gives rise to a fundamental class 
\begin{align} \label{loc 3.2}
    [\overline{\Gamma}_{s_\ell}] \in H_{N+\ell}(\overline{\Gamma}_{s_\ell}).
\end{align}

Here we have chosen an orientation of $\bR^\ell$. This choice will not affect the homological normal cone, defined below.

Let $\delta_0 \in H_{\lbrace 0 \rbrace}^\ell (\bR^\ell)$ be Poincar\'{e} dual to $\lbrace 0 \rbrace \sub \bR^\ell$ and $q_\ell \colon p_\ell^* E \to \bR^\ell$ be the projection.

\begin{thm-defi} \cite[Definition~3.3, Proposition~3.4]{Siebert} \label{homological intrinsic normal cone} The \emph{homological normal cone} associated to $s$ is defined as
\begin{align} \label{definition of homological normal cone}
    [C(s)] = [\overline{\Gamma}_{s_\ell}] \cap q_\ell^* \delta_0 \in H_N ( C(s) ).
\end{align}
It is independent of $\ell$ and satisfies $[C(s)] = [\Gamma_{t \cdot s}] \in H_N ( E )$ for any $t \neq 0$.
\end{thm-defi}

\begin{rmk}
The homological normal cone should be regarded as the fundamental class of $C(s)$. This is motivated by the fact that formula \eqref{formula of top intr normal cone} can obviously be equivalently written as $C(s) = \overline{\Gamma}_{s_\ell} \cap q_\ell^{-1}(0)$.
\end{rmk}

Finally, we remark that the topological normal cone is compatible with the intrinsic normal cone used in complex algebraic geometry. Suppose that $Y$ is a complex manifold, $E$ is a holomorphic vector bundle and $s$ a holomorphic section. Let $I$ be the ideal sheaf of $X$ in $Y$. The normal cone of $X$ associated to $s$ is then defined as
$$ C_{X/Y} = \Spec_{\oO_X} \left( \oplus_{n \geq 0} I^n/I^{n+1} \right)$$
and is a closed subcone of the vector bundle $E$. Here $\Spec$ is used to stand for $\mathrm{MaxSpec}$, so that as a topological space the points of $C_{X/Y}$ correspond to maximal ideals.

\begin{prop} \cite[Proposition~3.5]{Siebert} \label{compatibility with algebraic normal cone}
$C_{X/Y} = C(s) \sub E$ and $[C_{X/Y}] = [C(s)] \in H_N(E)$.
\end{prop}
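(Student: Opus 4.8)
The plan is to prove both assertions at once by recognizing that the complex-analytic normal cone $C_{X/Y}$ \emph{is} the algebraic flat limit of the graphs $\Gamma_{t\cdot s}$ as $t\to\infty$, and then quoting the two limit descriptions already established: $C(s)=\lim_{t\to\infty}\Gamma_{t\cdot s}$ as closed subsets of $E$ (Theorem-Definition~\ref{topological intrinsic normal cone definition}(1)) and $[C(s)]=[\Gamma_{t\cdot s}]\in H_N(E)$ for $t\neq 0$ (Theorem-Definition~\ref{homological intrinsic normal cone}). With these in hand, the set-theoretic claim reduces to $C_{X/Y}=\lim_{t\to\infty}\Gamma_{t\cdot s}$ and the homological claim reduces to $[C_{X/Y}]=[\Gamma_{t\cdot s}]\in H_N(E)$, so that everything is governed by a single deformation-to-the-normal-cone family.

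To build this family I would work locally on $Y$, where $E$ is trivial with fiber coordinates $w_1,\dots,w_r$ and $s=(s_1,\dots,s_r)$ generates the ideal $I$ of $X$, and form the extended Rees algebra $\mathcal{R}=\bigoplus_{n\in\bZ} I^n u^{-n}\subset \oO_Y[u,u^{-1}]$ (with $I^n=\oO_Y$ for $n\le 0$). The surjection $\oO_Y[w_1,\dots,w_r]\to \mathcal{R}$, $w_a\mapsto s_a u^{-1}$, presents $M^\circ=\SPEC_Y\mathcal{R}$ as a closed subscheme of $E\times\bA^1_u$; these patch over $Y$ to a global closed $M^\circ\subset E\times\bA^1$. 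Since $\mathcal{R}$ is $u$-torsion-free, $M^\circ\to\bA^1$ is flat, its fiber over $u\neq 0$ is $\{w_a=s_a/u\}=\Gamma_{(1/u)\cdot s}$, and, because $\mathcal{R}/u\mathcal{R}\cong\bigoplus_{n\ge 0}I^n/I^{n+1}$, its fiber over $u=0$ is $C_{X/Y}$. As $M^\circ$ is closed in $E\times\bA^1$, the points $(y,s(y)/u)$ over $\{s\neq 0\}$ escape to infinity in the fiber direction and are simply not retained in the limit; the support of the special fiber is therefore exactly the set of finite limit points of $\Gamma_{(1/u)\cdot s}$, i.e.\ $\lim_{t\to\infty}\Gamma_{t\cdot s}$. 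Setting $t=1/u$ gives $C_{X/Y}=\lim_{t\to\infty}\Gamma_{t\cdot s}=C(s)$ as subsets of $E$.

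For the homological statement I would use conservation of number along the flat family $M^\circ\to\bA^1$. Working in Borel–Moore homology of the non-compact ambient $E\times\bA^1$ — which is precisely what discards the mass escaping to infinity — the Borel–Moore fundamental class of the fiber is locally constant in $u$ after pushing into $H_N(E)$ (Verdier/Fulton specialization for a flat family over a smooth curve). Hence $[C_{X/Y}]=[\Gamma_{(1/u)\cdot s}]=[\Gamma_{t\cdot s}]$ in $H_N(E)$; here $[C_{X/Y}]$ is the analytic cycle class $\sum_W m_W[W]$ with its scheme-theoretic multiplicities, and these multiplicities are automatically the correct ones because the generic fiber $\Gamma_{t\cdot s}\cong Y$ is reduced of multiplicity one and specializes to the fundamental cycle of the scheme $C_{X/Y}$. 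Combining with $[\Gamma_{t\cdot s}]=[C(s)]$ yields $[C_{X/Y}]=[C(s)]\in H_N(E)$.

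The main obstacle I anticipate is reconciling Siebert's purely topological objects ($C(s)$ as a limit of sets and $[C(s)]$ as a Borel–Moore class) with the algebro-geometric normal cone carrying scheme-theoretic multiplicities: one must verify both that the two have identical support and that the homology classes match with the right multiplicities, all while controlling the loss of mass to infinity as $t\to\infty$. The deformation-to-the-normal-cone family resolves this cleanly, since it is simultaneously flat over $\bA^1$ and closed in $E\times\bA^1$; flatness transports the class of the generic graph to the cycle class of the cone, and closedness in the non-compact $E\times\bA^1$ together with the use of Borel–Moore homology ensures that the escaping mass over $\{s\neq 0\}$ never enters either the limiting set or the limiting class. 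A minor point to keep track of is that $Y$ may be non-compact (an open subset of $\bC^d$), which is exactly why one works throughout with Borel–Moore rather than singular homology.
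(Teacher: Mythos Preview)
The paper does not supply its own proof of this proposition; it simply quotes \cite[Proposition~3.5]{Siebert}. Your argument via the deformation to the normal cone is precisely the standard one and is essentially what Siebert does: the extended Rees algebra gives a flat family $M^\circ\subset E\times\bA^1$ whose general fiber is $\Gamma_{(1/u)\cdot s}$ and whose special fiber is the scheme $C_{X/Y}$, and then one reads off both the set-theoretic and the homological limit from this single family, invoking the already-established identities $C(s)=\lim_{t\to\infty}\Gamma_{t\cdot s}$ and $[C(s)]=[\Gamma_{t\cdot s}]$ to close the loop. So your proposal is correct and aligned with the cited source.

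One small point worth tightening: you assert that the support of the special fiber of $M^\circ$ is \emph{exactly} the topological limit of the graphs. The inclusion ``limit points $\subset M^\circ_0$'' is immediate from closedness of $M^\circ$; the reverse inclusion uses that $M^\circ$, being $u$-torsion-free, equals the (scheme-theoretic, hence also topological) closure of $M^\circ|_{u\neq 0}$, so every point of $M^\circ_0$ is approached by points in nearby fibers. You have the right hypothesis in hand ($u$-torsion-freeness), but it would be cleaner to state explicitly that this forces $M^\circ=\overline{M^\circ|_{u\neq 0}}$ before drawing the conclusion. Similarly, the passage from the complex parameter $u\in\bA^1_\bC$ to Siebert's real parameter $v\in\bR^\ell$ is handled correctly by routing through a fixed finite $t$, but you might remark that this is where the two constructions are being matched.
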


\subsection{Virtual fundamental class} \label{virtual fundamental cycle subsection} Let now $\xX$ be a compact, oriented d-manifold with underlying topological space $X$ and virtual dimension $n$.

By Theorem-Definition~\ref{presentation as a d-manifold}, $\xX$ is equivalent to a principal d-manifold, meaning that there exists an open subset $Y \sub \bR^N$, a smooth vector bundle $E$ on $Y$ of rank $r$ and a smooth section $s$ of $E$ whose zero locus is $X$. 

Moreover, we may assume that $Y$ retracts onto $X$ and 
$$f_* \colon H_n (X) \to H_n (Y)$$ 
is an isomorphism, where $f \colon X \to Y$ is the inclusion map and $n=N-r$.

The line bundle $L = \Lambda^r E \otimes \Lambda^N T^* Y$ is oriented. Since $\Lambda^N T^* Y$ admits an orientation by the definition of $Y$, this is equivalent to a choice of orientation of the bundle $E$. We may therefore further assume that $E$ is oriented. This choice of orientation does not affect the virtual fundamental class below. 

By a small perturbation of the section $s$ of $E$, we get a section $s'$ of $E$ intersecting the zero section $Y$ transversely. Thus its zero locus $X'$ is a compact, oriented smooth manifold. Write $f' \colon X' \to Y$ for the inclusion.

\begin{defi} \cite{JoyceDMan}
The J-virtual fundamental class of $\xX$ is defined as
\begin{align} \label{Joyce definition of virtual cycle}
    [\xX]_J\virt = (f_*)^{-1} \circ f'_* [X'] \in H_{n}(X).
\end{align}
\end{defi}

It is shown in \cite{JoyceDMan} that the J-virtual fundamental class is independent of all choices involved and only depends on the bordism class of $\xX$. 

\begin{rmk}
The assumption that $Y$ retracts onto $X$ and the pushforward $f_*$ induces an isomorphism on homology uses the fact that $X$ is assumed to be a Euclidean Neighbourhood Retract (ENR). This is however not strictly necessary in order to define $[\xX]_J\virt$, as is explained for example in \cite[Subsection~2.6.5]{BorisovJoyce}.
\end{rmk}

Using the homological normal cone, we can also define a virtual fundamental class as follows.

\begin{defi} \label{BF definition}
The BF-virtual fundamental class of $\xX$ is defined as 
\begin{align} \label{BF definition of virtual cycle}
    [\xX]_{BF}\virt = 0_{E|_X}^! [C(s)] \in H_{n}(X).
\end{align}
\end{defi}

The next proposition shows that the two definitions are equivalent. In particular, the BF-virtual fundamental class is well defined and independent of any choices made above.

\begin{prop}
$[\xX]_J\virt = [\xX]_{BF}\virt$.
\end{prop}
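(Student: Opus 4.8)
The plan is to compare both classes after pushing them forward to $H_n(Y)$, where the retraction assumption makes $f_*$ an isomorphism, and to exploit the fact that the homological normal cone becomes insensitive to the choice of section once it is pushed into $H_N(E)$. The three ingredients I would assemble are: a computation of the transverse model $s'$, a compatibility between the \emph{localized} zero-section Gysin map and pushforward into the total space, and the section-independence statement from Theorem-Definition~\ref{homological intrinsic normal cone}.

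First I would analyze the transverse model. Since $s'$ meets the zero section transversely, $X' = Z(s')$ is a smooth submanifold of $Y$ and $ds'$ identifies the topological normal cone $C(s')$ with the normal bundle $N_{X'/Y}$, embedded in $E|_{X'}$ through the fiberwise isomorphism $ds'\colon N_{X'/Y} \xrightarrow{\sim} E|_{X'}$. Under this identification $[C(s')]$ is the Thom-class image of $[X']$, so that $0_{E|_{X'}}^![C(s')] = [X'] \in H_n(X')$, and its pushforward to $Y$ is $f'_*[X'] = f_* [\xX]_J\virt$.

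Second, the decisive point is a compatibility between the refined zero-section Gysin map and pushforward into the ambient bundle: for a closed cone $C \subset E|_Z$ lying over a compact $Z$ and meeting the zero section exactly along $Z$, the composite of $0_{E|_Z}^!\colon H_N(C) \to H_n(Z)$ with the inclusion pushforward $H_n(Z) \to H_n(Y)$ agrees with the global Gysin map $0_E^!\colon H_N(E) \to H_n(Y)$ applied to the image of $[C]$ in $H_N(E)$. I would deduce this by realizing $0^!$ as cap product with the pulled-back Thom class and invoking the projection formula, the Thom class being supported along the zero section. Applying this to both $(s,X)$ and $(s',X')$ reduces the theorem to the single identity $[C(s)] = [C(s')] \in H_N(E)$. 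For the last identity I would use Theorem-Definition~\ref{homological intrinsic normal cone}, which gives $[C(s)] = [\Gamma_{t\cdot s}]$ and $[C(s')] = [\Gamma_{t\cdot s'}]$ in $H_N(E)$ for any $t \neq 0$; the graph of any section is the image of $Y$ under a section map that is properly homotopic to the zero section through the family $u \mapsto \Gamma_{u t s}$, so in Borel--Moore homology every graph class equals $0_*[Y]$. Hence both cone classes equal $0_*[Y]$ and coincide. Combining the steps yields $f_*[\xX]_{BF}\virt = 0_E^![C(s)] = 0_E^![C(s')] = f'_*[X'] = f_*[\xX]_J\virt$, and injectivity of $f_*$ on $H_n$ gives the claim.

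The main obstacle I anticipate lies in the second step: the localized map $0_{E|_X}^!$ is built from the normal-cone deformation over $X$ alone, and one must verify carefully, in Borel--Moore homology and with $E$ noncompact, that localizing to $X$ and then pushing to $Y$ recovers the global intersection with the zero section. The projection-formula argument is routine in the algebraic setting, but in the topological category it requires attention to the support conditions on the Thom class and to properness of the homotopies used in the final section-independence step.
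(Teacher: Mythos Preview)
Your proposal is correct and follows essentially the same route as the paper: push both classes to $H_n(Y)$, use the compatibility $f_* \circ 0_{E|_X}^! = 0_E^! \circ g_*$ (which the paper cites from \cite[Equation~(2.23)]{KiemLiQuantum} rather than deriving via the Thom class and projection formula), invoke Theorem-Definition~\ref{homological intrinsic normal cone} to replace $g_*[C(s)]$ by $[\Gamma_s]$ in $H_N(E)$, and then use that $[\Gamma_s] = [\Gamma_{s'}]$ together with transversality of $s'$ to identify $0_E^![\Gamma_{s'}]$ with $f'_*[X']$. The only cosmetic difference is that the paper goes directly from $0_E^![\Gamma_{s'}]$ to $f'_*[X']$ by transversality, whereas you pass through $0_{E|_{X'}}^![C(s')] = [X']$ first.
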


\begin{proof}
By \eqref{Joyce definition of virtual cycle} and \eqref{BF definition of virtual cycle}, we need to show
\begin{align} \label{loc 3.4}
   f'_* [X'] = f_* 0_{E|_X}^! [C(s)] \in H_n(Y).
\end{align}
By the Cartesian diagram
\begin{align*}
    \xymatrix{
    E|_X \ar[d] \ar[r]^g & E \ar[d] \\
    X \ar[r]_f & Y
    }
\end{align*}
we have (cf. for example \cite[Equation~(2.23)]{KiemLiQuantum}) 
\begin{align} \label{loc 3.5}
    f_* \circ 0_{E|_X}^! = 0_E^! \circ g_* .
\end{align}

By Theorem-Definition~\ref{homological intrinsic normal cone}, $g_* [C(s)] = [\Gamma_s] \in H_N(E)$ and for a small perturbation $s'$ of $s$ we have $[\Gamma_s] = [\Gamma_{s'}] \in H_N (E)$. Therefore
\begin{align} \label{loc 3.6}
    0_E^! \ g_* [C(s)] = 0_E^! [\Gamma_{s'}] = f_*'[X'] \in H_n(Y)
\end{align}
since $s'$ is taken to be transverse to the zero section of $E$.

Combining \eqref{loc 3.4}, \eqref{loc 3.5} and \eqref{loc 3.6} completes the proof.
\end{proof}

From now on, we write $[\xX]\virt$ for the virtual fundamental class of the d-manifold $\xX$.

\begin{rmk}
The construction of $[\xX]\virt$ does not strictly require that $\xX$ is compact, but rather the existence of a presentation $\xX \simeq \sS_{Y,E,s}$ or more generally of a closed embedding of $\xX$ as a d-manifold into a smooth manifold. The compactness of $\xX$ implies both of these conditions and simplifies our arguments, so we have opted to assume it here and subsequently in the paper to facilitate the exposition.
\end{rmk}

\begin{rmk}
Suppose that $Y$ is a complex manifold and $E$ and $s$ are holomorphic. Since the homological normal cone $[C(s)]$ coincides with the normal cone $[C_{X/Y}]$ by Proposition~\ref{compatibility with algebraic normal cone}, it is clear by Definition~\ref{BF definition} that the virtual fundamental class $[\xX]\virt$ coincides with the definition of the virtual fundamental cycle in \cite{BehFan}.
\end{rmk}

\section{Vanishing for Surjective Real Cosections} \label{Surjective Cosections Section}

Let $\xX$ be a d-manifold whose truncation is the $\cC^\infty$-scheme $X_{\cC^\infty}$. We typically denote the underlying topological space by $X$.

To simplify notation, we will write $\bR_X$ to denote both the trivial line bundle on $X_{\cC^\infty}$ as an $\oO_{X_{\Coo}}$-module and the trivial real line bundle on the topological space $X$. 

In this section, we introduce the appropriate notions of cosections that will be considered throughout the paper and prove vanishing theorems for the virtual fundamental classes of d-manifolds and $(-2)$-shifted symplectic schemes in the presence of a surjective or non-degenerate cosection respectively, which constitutes the simplest possible scenario.

Besides setting up notation, we treat the case of surjective cosections separately from the general case, which is the subject of Section~\ref{Real Cosections Loc Section}, for two reasons. The first is technical and has to do with the fact that to prove vanishing we don't need to work with normal cones nor employ homological considerations for intersections or the topology of the vanishing locus of a cosection. This simplifies the arguments and also allows us to prove a vanishing result for not just surjective real cosections of d-manifolds, but for the relaxed version of surjective weak real cosections. The second reason is that for the purposes of applying the theory to $(-2)$-shifted symplectic derived schemes, the surjective case (Theorem~\ref{vanishing for non-degenerate cosection}) plays an important role as the first step in the proof of the general case (Theorem~\ref{localization for derived scheme non-degenerate cos}). We therefore elect to treat it separately in order to clarify the argument for the convenience of the reader.

\subsection{Localization by surjective real cosections of d-manifolds} Let $\xX$ be a d-manifold with underlying $\cC^\infty$-scheme $X_{\cC^\infty}$. We give the following definition. 

\begin{defi} \label{cosection for d-manifold}
A \emph{real cosection} is a morphism $\sigma \colon \Ob_{\xX} \to \bR_X$ of coherent sheaves on $X_{\cC^\infty}$.

A \emph{weak real cosection} is a collection of linear maps $\sigma_x \colon \Ob_{\xX}|_x \to \bR$ for $x \in X$ that vary continuously, i.e. for any open subset $U \sub X$ and $\Coo$-vector bundle $E_U$ on $U$ with a surjection $p_U \colon E_U \to \Ob_{\xX}|_U$ of sheaves on $U_{\cC^\infty}$ the linear maps $\sigma_x \circ p_U|_x \colon E_U|_x \to \bR$ define a continuous map of vector bundles $E_U \to \bR_U$.

A (weak) real cosection is surjective if the corresponding morphism(s) in its definition is (are) surjective.
\end{defi}

\begin{rmk}
Clearly a real cosection induces data of a weak real cosection.
\end{rmk}

We then have the following vanishing theorem.

\begin{thm} \label{vanishing for surjective real cosection}
Let $\xX$ be a compact, oriented d-manifold of virtual dimension $n$ with a surjective weak real cosection. Then 
$$[\xX]\virt = 0 \in H_n (X),$$
i.e. the virtual fundamental class of $\xX$ is zero.
\end{thm}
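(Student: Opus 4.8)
The plan is to reduce to a principal d-manifold and then compute $[\xX]\virt$ through Joyce's perturbative definition, using a Kiem--Li-type perturbation in the direction singled out by the cosection. First I would invoke Theorem-Definition~\ref{presentation as a d-manifold} to present $\xX\simeq\sS_{Y,E,s}$, where $Y$ is a relatively compact open neighbourhood of $X$ in some $\bR^N$ that retracts onto $X$, $E\to Y$ is an oriented smooth bundle of rank $r$, $s\in\Coo(E)$ has zero locus $X$, and $n=N-\rk E$. Here $\Ob_\xX=\coker(ds\colon T_Y|_X\to E|_X)$, and $[\xX]\virt=(f_*)^{-1}f'_*[X']$ for a small transverse perturbation $s'$ of $s$. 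The surjective weak cosection gives, at each $x\in X$, a surjection $\sigma_x\colon\Ob_\xX|_x\to\bR$; precomposing with the quotient $E|_x\twoheadrightarrow\Ob_\xX|_x$ yields a nowhere-zero $\bar\sigma\in\Gamma(E^\vee|_X)$, the continuity hypothesis of Definition~\ref{cosection for d-manifold} being exactly what makes $\bar\sigma$ continuous. I would extend $\bar\sigma$ to a bundle map $\tilde\sigma\colon E\to\bR$ over $Y$; by openness of surjectivity and compactness of $X$, after shrinking $Y$ the map $\tilde\sigma$ is nowhere zero on $\overline Y$, so the degeneracy locus $X(\sigma)=\{x:\bar\sigma_x=0\}$ is empty. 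Fixing a metric on $E$, let $\eta\in\Gamma(E)$ be the metric dual of $\tilde\sigma$, so that $\tilde\sigma(\cdot)=\langle\eta,\cdot\rangle$ with $|\eta|\ge c>0$ on $\overline Y$, and set $\phi=\tilde\sigma\circ s$, which vanishes on $X$.

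The heart of the argument is to perturb $s$ in the direction $\eta$, following the analytic cosection localization of Kiem--Li \cite{KiemLiSmoothCosection}. I would consider $s_t=s+t\rho\,\eta$ for a cut-off $\rho$ equal to $1$ near $X$ and with $\mathrm{supp}\,\rho\subset\subset Y$, and control its zeros through $\tilde\sigma(s_t)=\phi+t\rho\,|\eta|^2$. The cut-off guarantees that the homotopy $u\mapsto s+u\,t\rho\,\eta$ has zero locus inside $\mathrm{supp}\,\rho\times[0,1]\subset\subset Y\times[0,1]$, so that no zeros escape through $\partial Y$; by the bordism-invariance of $[\xX]\virt$ established in \cite{JoyceDMan}, the section $s_t$ (made transverse by a further small perturbation) therefore still computes $[\xX]\virt$. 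Near $X$, where $\rho=1$ and $\phi$ is small, we have $\tilde\sigma(s_t)\ge tc^2-\sup|\phi|>0$ for $t$ large, so $s_t$ has no zeros there; thus the zeros of $s_t$ are pushed off $X$ and concentrate near the degeneracy locus $X(\sigma)$. Since $X(\sigma)=\varnothing$, the localized class lives in $H_n(X(\sigma))=H_n(\varnothing)=0$, forcing $[\xX]\virt=0$.

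The step I expect to be the main obstacle is the final one: showing, without invoking the normal cone, that the class carried by the perturbed zero locus is genuinely supported near the empty degeneracy locus and hence vanishes, rather than surviving as a nontrivial class in the annular region where $\rho$ is nonconstant. This is precisely where the surjectivity of $\sigma$ (so that $X(\sigma)=\varnothing$) and the compactness of $X$ are indispensable: the elementary example $\sS_{\bR^2,\bR^2,(x,y)}$, for which $\Ob_\xX=0$ and no surjective cosection exists, shows that without a genuine cosection the perturbed zeros merely escape through $\partial Y$ and the class need not vanish. I would discharge this point by appealing directly to the smooth and topological cosection localization framework of Kiem--Li \cite{KiemLiSmoothCosection,KiemLiQuantum}, whose conclusion places the localized class in $H_n(X(\sigma))$, keeping the treatment of surjective cosections deliberately free of the homological analysis of cones required for the general localization theorem of Section~\ref{Real Cosections Loc Section}.
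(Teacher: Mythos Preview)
Your setup is correct and in fact contains all the ingredients for the paper's proof, but you then take a long detour that ends by invoking precisely the machinery the paper is trying to avoid in this section. Once you have extended the surjective cosection to a nowhere-zero bundle map $\tilde\sigma\colon E\to\bR_Y$ on a (shrunken) $Y$, you are done: by Joyce's definition, $f_*[\xX]\virt = e(E)\cap[Y]$, and a rank-$r$ oriented bundle with a trivial quotient has $e(E)=0$. That is the entire proof in the paper. You actually construct the nowhere-zero section $\eta\in\Gamma(E)$ dual to $\tilde\sigma$, which is exactly what kills the Euler class, but instead of stopping there you use $\eta$ to build an explicit perturbation $s_t=s+t\rho\eta$ and try to track its zeros.

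This detour is where the difficulty you flag arises. The annular-region problem is real: in the transition zone where $0<\rho<1$, the quantity $\tilde\sigma(s_t)=\phi+t\rho|\eta|^2$ need not be positive for any fixed $t$, since $\rho$ can be arbitrarily small there, so $s_t$ can genuinely have zeros away from $X$. Your resolution is to cite Kiem--Li's localization as a black box placing the class in $H_n(X(\sigma))$. In the context of this paper that is backwards: the d-manifold version of that localization is what Section~\ref{Real Cosections Loc Section} is set up to establish, and the paper explicitly states that the surjective case is treated separately precisely to give an argument free of normal cones and the homological apparatus of that section. Invoking the general theorem to prove its simplest special case defeats the purpose. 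The missing observation is simply that the virtual class \emph{is} the Euler class, so a nowhere-vanishing section (which you already have) finishes the proof in one line.
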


\begin{proof}
By Theorem-Definition~\ref{presentation as a d-manifold}, $\xX$ is equivalent to a principal d-manifold. Therefore, we have an open subset $Y \sub \bR^N$, a smooth, oriented vector bundle $E$ on $Y$ of rank $r$ and a smooth section $s$ of $E$ with $X$ being its zero locus, which define an embedding of $\cC^\infty$-schemes $X_{\cC^\infty} \to Y$.

The tangent complex of $\xX$ is given by
\begin{align*}
    \mathbb{T}_\xX = [T_Y|_{X_{\cC^\infty}} \xrightarrow{\dd s} E|_{X_{\cC^\infty}}]
\end{align*}
and hence we have a surjection $p \colon E|_{X_{\cC^\infty}} \to \Ob_{\xX}$. By the definition of weak cosection, the maps $\sigma_x \circ p|_x \colon E|_x \to \bR$ for $x \in X$ vary continously and thus induce a continuous surjection of vector bundles
\begin{align} \label{loc 4.1}
   \sigma \circ p \colon E|_X \to \bR_X. 
\end{align}

Keeping the same notation around \eqref{Joyce definition of virtual cycle}, with $f \colon X \to Y$ denoting the inclusion of $X$ inside $Y$, we have by definition that 
$$f_\ast [\xX]\virt = e(E) \cap [Y] \in H_n(Y)$$
where $e(E)$ denotes the Euler class of $E$. 

After possibly shrinking $Y$ around $X$, since $X$ is compact, we may assume that the surjection \eqref{loc 4.1} extends to a continuous surjection $E \to \bR_Y$. Since $E$ has a trivial quotient, $e(E) = 0 \in H^{r}(Y)$ and we obtain 
$$[\xX]\virt = (f_\ast)^{-1} \left( e(E) \cap [Y] \right) = 0 \in H_n(X).$$ \end{proof}

\subsection{Vanishing for $(-2)$-shifted symplectic derived schemes} We can give the following general definition of a cosection for any derived scheme $\unxX$. 

\begin{defi} \label{cosection for derived schemes}
A \emph{cosection} is a morphism $\sigma \colon \mathbb{T}_{\unxX} |_X[1]  \to \oO_{X}$ in $D^b(\Coh X)$. We say that $\sigma$ is surjective if the morphism $h^0(\sigma) \colon \Ob_{\unxX} \to \oO_X$ is surjective, or equivalently if the collection of $\bC$-linear maps $\sigma_x \colon \Ob(\unxX, x) \to \bC$ for every $x \in X$ are surjective. 

Now suppose that $\omega_{\unxX}$ is a $(-2)$-shifted symplectic form on $\unxX$. We say that $\sigma$ is non-degenerate if the induced quadratic form $q_x^\vee$ on $\Ob(\unxX, x)^\vee$ is non-degenerate on the image of $\sigma_x^\vee$ for all $x \in X$. In particular, $\sigma$ must be surjective.
\end{defi}

\begin{rmk}
When $\unxX$ is quasi-smooth so that $\bT_{\unxX}|_X$ is a perfect complex of amplitude $[0,1]$, the above definition of a cosection coincides with the standard definition for schemes with perfect obstruction theory, as $h^1(\bT_{\unxX}|_X)$ is the obstruction sheaf of the perfect obstruction theory $\bL_{\unxX}|_X \to \bL_X$ on $X$, in the sense of Behrend-Fantechi \cite{BehFan}.
\end{rmk}

According to our discussion in Subsection~\ref{subsection 2.3}, a $(-2)$-shifted symplectic derived scheme $(\unxX, \omega_{\unxX})$ gives rise to an associated d-manifold $\xX_{dm}$. When $(\unxX, \omega_{\unxX})$ is moreover proper and oriented, there is a well defined virtual fundamental class $[\xX_{dm}]\virt \in H_*(X)$ in the homology of the underlying topological space $X$.

We then have the following theorem, saying that cosection localization applies to the setting of non-degenerate cosections, implying the vanishing of the virtual fundamental class.

\begin{thm} \label{vanishing for non-degenerate cosection}
Suppose that a proper, oriented $(-2)$-shifted symplectic scheme $(\unxX, \omega_{\unxX})$ admits a non-degenerate cosection $\sigma$. These data give rise to a d-manifold $\xX_{dm}$ with an induced surjective real cosection and hence a vanishing virtual fundamental class, i.e. $[\xX_{dm}]\virt = 0 \in H_{\mathrm{vir.dim.}}(X)$.
\end{thm}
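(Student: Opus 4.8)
The plan is to descend the complex cosection $\sigma$ to a surjective weak real cosection on the associated d-manifold $\xX_{dm}$ and then invoke Theorem~\ref{vanishing for surjective real cosection}. Since the virtual class is independent of all choices in Construction~\ref{construction of d-man from derived scheme} by Proposition~\ref{virtual class is canonical}, I am free to fix a convenient cover by analytic Darboux charts $(V,E,G,q,s)$ and convenient $q$-adapted subbundles $E^-\sub E$. Throughout I would identify, via the non-degenerate quadratic form $q$, the cosection with a global continuous section $v_\sigma$ of $\Ob_{\unxX}$ characterised by $\sigma_u = q_u(v_\sigma(u),-)$; the non-degeneracy hypothesis is exactly the statement that $c(u) := q_u(v_\sigma(u),v_\sigma(u))$ is nowhere zero on $X$.

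The first step is a pointwise linear-algebra computation identifying the obstruction space of $\xX_{dm}$. In a chart, the projection $E\to E^+=E/E^-$ together with $F\to 0$ defines a map of tangent complexes $\bT_{\unxX}|_U\to\bT_{\xX_{dm}}|_U$, inducing on cohomology a map $\bar\psi\colon\Ob(\unxX,u)=\ker(t|_u)/\im(ds|_u)\to\Ob_{\xX_{dm}}|_u = E|_u/(E^-|_u+\im ds|_u)$. Using conditions (1)--(3) of Definition~\ref{q-adapted definition}, I would check that $\bar\psi$ is surjective (a real dimension count from condition (2), namely $\ker t|_u + E^-|_u = E|_u$) with kernel the maximal $\re q_u$-negative definite subspace $P^-_u = \im\Pi_u$. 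Since $\re q_u$ is non-degenerate of signature $(d,d)$ on $\Ob(\unxX,u)$, this yields an orthogonal splitting $\Ob(\unxX,u)=P^-_u\oplus P^+_u$ with $\bar\psi$ restricting to an isomorphism $P^+_u\xrightarrow{\sim}\Ob_{\xX_{dm}}|_u$, and $\re q_u|_{P^+_u}$ positive definite.

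With this in hand, I would define the induced real cosection as the descent $\tau_u := \re\sigma_u\circ\mathrm{proj}_{P^+_u}$, where $\mathrm{proj}_{P^+_u}$ is the $\re q_u$-orthogonal projection; since $\tau_u$ vanishes on $P^-_u=\ker\bar\psi$, it factors through $\Ob_{\xX_{dm}}|_u$, and $\tau_u(v^+_\sigma)=\re q_u(v^+_\sigma,v^+_\sigma)$ with $v^+_\sigma=\mathrm{proj}_{P^+_u}(v_\sigma)$, so surjectivity of $\tau_u$ is equivalent to $v_\sigma(u)\notin P^-_u$. Here non-degeneracy enters decisively: because $c(u)\neq0$, the complex line $\bC v_\sigma(u)$ is a hyperbolic plane for $\re q_u$ (its Gram determinant in the basis $\{v_\sigma, i v_\sigma\}$ is $-|c(u)|^2<0$), so it carries a well-defined $\re q_u$-negative eigenline $L^-_u$ that varies continuously and, crucially, without sign monodromy, since only the unoriented line is needed. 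I would then arrange the $q$-adapted $E^-$ so that the induced $P^-_u$ contains $L^-_u$ and is otherwise a maximal negative subspace of $(\bC v_\sigma(u))^{\perp}$; this keeps the positive component of $v_\sigma$ nonzero everywhere, making $\tau$ a surjective weak real cosection. Since a proper, oriented $(-2)$-shifted symplectic scheme yields a compact, oriented $\xX_{dm}$ (Subsection~\ref{subsection 2.3}), Theorem~\ref{vanishing for surjective real cosection} then gives $[\xX_{dm}]\virt=0$.

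The main obstacle is the global compatibility in the last step: one must produce an honest $q$-adapted subbundle $E^-\sub E$ satisfying conditions (1) and (2) (not merely realising the prescribed image $P^-$) whose induced negative subspace contains the canonical line $L^-$, and do so compatibly with the homotopy-coherent gluing of the Darboux charts underlying Construction~\ref{construction of d-man from derived scheme}. The non-degeneracy hypothesis is precisely what removes the monodromy obstruction to this choice and forces $\tau$ to be globally surjective; without it, $\bC v_\sigma$ degenerates exactly along $\{c=0\}$, which is the degeneracy locus to which the class localizes in the more general statement.
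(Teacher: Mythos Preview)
Your overall plan matches the paper's: descend $\sigma$ to a surjective real cosection on $\xX_{dm}$ via the identification $\Ob_{\xX_{dm}}|_u \cong \Ob(\unxX,u)/\im\Pi_u$ and apply Theorem~\ref{vanishing for surjective real cosection}. That identification, and the reduction of surjectivity of the descended map to the position of $v_\sigma(u)$ relative to the maximal negative subspace $P^-_u$, are correct and appear in the paper as well.

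There is, however, a genuine gap in your choice of $P^-_u$. With $L^-_u$ the negative eigenline of the Gram matrix $\left(\begin{smallmatrix}\re c & -\im c\\ -\im c & -\re c\end{smallmatrix}\right)$ of $\re q_u$ in the basis $\{v_\sigma,iv_\sigma\}$, the eigenvector for $-|c|$ is $(1,0)$ precisely when $c(u)$ is real and negative; then $L^-_u=\bR\,v_\sigma(u)$, your prescription forces $v_\sigma(u)\in P^-_u$, and $\tau_u\equiv 0$. Worse, this cannot be repaired by any continuous choice of negative line in $\bC v_\sigma(u)$: along a loop in $X$ on which $c$ winds once around $0$, the fixed real line $\bR v_\sigma$ sweeps across the entire negative arc of the hyperbolic plane from one null direction to the other, and an intermediate-value argument forces every continuous section to meet it. So the assertion that ``this keeps the positive component of $v_\sigma$ nonzero everywhere'' is not just unproved but false in general.

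The paper avoids this by making the choice one level up, on the bundle $E$ of an analytic Darboux chart rather than intrinsically on $\Ob(\unxX,u)$. Non-degeneracy yields a $q$-orthogonal splitting $E=\oO_V\oplus\ker\sigma_E$; after an analytic reparametrisation putting $q=\sum_j y_j^2$ with $y_1$ the coordinate on the $\oO_V$-summand, one takes $E^-$ to be the direct sum of the imaginary axes of the coordinate lines. Since $\sigma_E$ is projection to $y_1$, this forces $\sigma_u(\im\Pi_u)\subseteq i\bR$ for every $u$ in the chart, so $\re\sigma_u$ itself descends to $\Ob_{\xX_{dm}}|_u$ --- in your language, $v_\sigma(u)\in P^+_u$, which is strictly stronger than $v_\sigma(u)\notin P^-_u$. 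Surjectivity then comes from the dimension count you also had in mind: a $d$-dimensional $\re q_u$-negative subspace cannot lie inside $\ker\sigma_u$, the latter having signature $(d-1,d-1)$ by the non-degeneracy hypothesis.
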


\begin{proof} We use the terminology introduced in Subsection~\ref{subsection 2.3}.

By Proposition~\ref{virtual class is canonical}, we are free to make any choice of $q$-adapted $E^- \sub E$ for an analytic Darboux chart $(V,E,G,q,s)$ around a point $x \in X$ in Costruction~\ref{construction of d-man from derived scheme}. 

Since $\bT_{\unxX}|_U$ is isomorphic to the complex $G$, $\sigma$ is determined by a morphism $E \to \oO_U$ and for every $u \in U$ we have the factorization
\begin{align*}
    \sigma_u \colon \Ob (\unxX, u) \sub E|_u / \im (ds|_u) \lr \oO_X|_u
\end{align*}
and hence the projection
\begin{align*}
    E|_u \lr E|_u / \im (ds|_u) \lr \oO_X |_u
\end{align*}
is surjective. Therefore we get a surjective morphism $E|_U \to \oO_U$, which up to possible shrinking of $V$ around $x$, extends to a surjective morphism $\sigma_E \colon E \to \oO_V$.

The dual $\sigma_E^\vee \colon \oO_V \to E^\vee$ restricts to $\sigma_x^\vee$ at $x$, as the differentials of the complex $G$ vanish at $x$ by construction. Thus, since $\sigma$ is non-degenerate, we may assume, up to further shrinking of $V$ around $x$, that the image of $\sigma_E^\vee$ is non-degenerate with respect to the quadratic form $q^\vee$ on $E^\vee$.

Taking the orthogonal complement of the image and dualizing, we get a $q$-orthogonal splitting $E = \oO_V \oplus \ker (\sigma_E)$, such that $q$ is non-degenerate on each summand. After re-parametrizing analytically, $q$ is equal to $$q_v(z) = y_1^2 + y_2^2 + ... + y_n^2$$ on each fiber over $v \in V$, where $y_1$ is the coordinate on the fiber of the first trivial summand $\oO_V$ and $y_2, ..., y_n$ are the fiber coordinates of the summand $\ker(\sigma_E)$.

We now take $E^-$ to be the direct sum of the imaginary parts of each individual summand. $E^-$ is easily seen to be $q$-adapted around $x \in X$ (it is obviously $q$-adapted at $x$ and being $q$-adapted is an open condition by \cite{BorisovJoyce}).

Then, for $u \in X$ close to $x$, the subspace $\im \Pi_u \sub \Ob(\unxX, u)$ must project onto $\bC$ through $\sigma_u$ non-trivially since it is maximal negative definite with respect to $\re q_u$ as $E^-$ is $q$-adapted (see Definition~\ref{q-adapted definition}). But, by the choice of $E^-$, $\im \Pi_u$ consists only of imaginary vectors and therefore its projection to $\bC$ is contained in $i\bR \sub \bC$, so we must have $\sigma_u (\im \Pi_u) = i \bR$. This implies that $\sigma_u$ induces a surjection
\begin{equation*}
    \sigma_u^+ \colon \Ob(\unxX, u) / \im \Pi_u \lr \bR.
\end{equation*}

For every such $u \in X$, the conditions in Definition~\ref{q-adapted definition} imply the existence of an exact sequence
\begin{equation*}
    T_V|_u \xrightarrow{ds^+} E^+ |_u \lr \Ob(\unxX, u) / \im \Pi_u \lr 0.
\end{equation*}

Since $\xX_{dm}$ is locally equivalent to the principal d-manifold $\sS_{V, E^+, s^+}$, the maps $\sigma_u^+ \colon \Ob_{\xX_{dm}}|_u \to \bR$ vary smoothly and, in particular, give a surjective real cosection for $\xX_{dm}$. Thus, by Theorem~\ref{vanishing for surjective real cosection}, we get the desired vanishing $[\xX_{dm}]\virt = 0 \in H_* (X)$.
\end{proof}

\section{Localization by Real Cosections} \label{Real Cosections Loc Section}

Keeping the notation of the previous section, let $\xX$ be a d-manifold whose truncation is the $\cC^\infty$-scheme $X_{\cC^\infty}$ and whose underlying topological space is $X$. Let $\sigma \colon \Ob_{\xX} \to \bR_X$ be a real cosection on $\xX$ (cf. Definition~\ref{cosection for d-manifold}). 

In this section, we use the intersection pairing in Borel-Moore homology in order to localize the virtual fundamental class $[\xX]\virt$ to a class $[\xX]_{\loc, \sigma}\virt \in H_*(X(\sigma))$, where $X(\sigma)$ is the degeneracy locus of $\sigma$, i.e. the locus of points $x \in X$ for which ${\sigma}|_x$ is equal to zero (or more generally possibly undefined). To do this, we first establish the appropriate necessary cone reduction criterion and then adapt the construction first introduced in \cite{KiemLiSmoothCosection} and \cite[Appendix]{KiemLiCosection}. We finally obtain a corresponding cosection localization formalism for $(-2)$-shifted symplectic derived schemes, combining the results of this section with the proof of Theorem~\ref{vanishing for non-degenerate cosection}.

\subsection{Cone reduction by real cosections} In this subsection, we verify that the arguments of \cite[Section~4]{KiemLiCosection} go through in our context. 
\medskip

Let $Y$ be a smooth manifold of dimension $N$, $E$ a smooth vector bundle on $Y$ and $s$ a smooth section of $E$ with zero locus $X = Z(s) \sub Y$. Locally, let $s_1, ..., s_r \in \cC^\infty(Y)$ be the components of the section $s$ and $y_1, ..., y_N$ coordinates on $Y$.

Now consider the principal d-manifold $\xX = \sS_{Y,E,s}$ determined by the above data. Write $p \colon E|_{X_{\Coo}} \to \Ob_{\xX}$ for the projection morphism. We have the following cone reduction statement (cf. \cite[Corollary~4.5]{KiemLiCosection}).

\begin{prop} \label{support of cone}
Let $\bar{\sigma} \colon \Ob_{\xX} \to \bR_X$ be a morphism of coherent sheaves on $X_{\cC^\infty}$. Let $X(\sigma)$ be the locus where $\bar{\sigma} \circ p$ is zero and $U(\sigma) = X - X(\sigma)$ its complement, where $\bar{\sigma} \circ p$ is surjective. Then $C(s)$ is contained entirely in the kernel bundle $E(\sigma) = \ker ( \bar{\sigma} \circ p ) = E|_{X(\sigma)} \cup \ker (\bar{\sigma} \circ p)|_{U(\sigma)}$. 
\end{prop}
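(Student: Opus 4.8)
The plan is to work pointwise over the two strata of $X$ and reduce the containment to a single limit computation built from the defining relation of a cosection. Over $X(\sigma)$ the statement is vacuous, since there $\bar{\sigma}\circ p = 0$ and hence $E(\sigma) = E|_{X(\sigma)}$ contains everything lying over $X(\sigma)$; as $C(s)$ lies over $X$ by Theorem-Definition~\ref{topological intrinsic normal cone definition}, it suffices to prove that $C(s)\cap E|_{U(\sigma)}\subset\ker(\bar{\sigma}\circ p)|_{U(\sigma)}$. Fix $x\in U(\sigma)$. Using the description $C(s) = \lim_{t\to\infty}\Gamma_{t\cdot s}$, a point $v\in C(s)$ lying over $x$ has the form $v = \lim_i t_i\, s(y_i)$ with $y_i\to x$ in $Y$ and $t_i\to\infty$; in particular $t_i|s(y_i)|$ is bounded, so $|s(y_i)|\to 0$.

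First I would produce a local lift of the cosection. Since $x\in U(\sigma)$, the map $\bar{\sigma}\circ p\colon E|_X\to\bR_X$ is surjective near $x$ and factors through $\Ob_{\xX} = \coker(\dd s)$, so on a neighbourhood in $Y$ it extends to a smooth, surjective bundle map $\hat\sigma\colon E\to\bR_Y$ with $\hat\sigma|_X = \bar{\sigma}\circ p$; after shrinking, $\ker(\bar{\sigma}\circ p)|_{U(\sigma)}$ agrees with $\ker(\hat\sigma)|_X$. By continuity of $\hat\sigma$ and $t_i s(y_i)\to v$, the containment $v\in\ker(\hat\sigma_x)$ is equivalent to $\hat\sigma_x(v) = \lim_i t_i\,\hat\sigma_{y_i}(s(y_i)) = \lim_i t_i\, g(y_i) = 0$, where $g = \hat\sigma(s)\in\Coo(Y)$. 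Writing $s = (s_1,\dots,s_r)$ and $\hat\sigma = (\hat\sigma_1,\dots,\hat\sigma_r)$ in a local frame of $E$, we have $g = \sum_j\hat\sigma_j s_j$, and the cosection property that $\hat\sigma|_X$ annihilates $\im(\dd s|_X)$ translates into the module relations $\sum_j\hat\sigma_j\,\partial_k s_j\in I$ for every coordinate $y_k$ on $Y$, where $I = (s_1,\dots,s_r)\subset\Coo(Y)$ is the ideal cutting out $X_{\Coo}$.

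The heart of the argument is a cone reduction statement, the $\Coo$-analogue of \cite[Section~4]{KiemLiCosection}: near $x$ the function $g$ lies in $I\cdot\mathfrak{a}$, where $\mathfrak{a}$ is the ideal of functions vanishing along $X$; concretely $g = \sum_k s_k\, h_k$ with each $h_k$ vanishing on $X$. Granting this, the limit is immediate: $t_i g(y_i) = \sum_k\bigl(t_i s_k(y_i)\bigr)\,h_k(y_i)$, and since $t_i s_k(y_i)$ is bounded (it converges to $v_k$) while $h_k(y_i)\to h_k(x) = 0$, we obtain $t_i g(y_i)\to 0$, hence $\hat\sigma_x(v) = 0$. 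As $x\in U(\sigma)$ and $v$ were arbitrary, this proves $C(s)\subset E(\sigma)$.

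I expect the factorization $g\in I\cdot\mathfrak{a}$ to be the main obstacle. It is exactly here that the argument of \cite{KiemLiCosection} must be transported from the algebraic to the $\Coo$-category: the relations $\sum_j\hat\sigma_j\partial_k s_j\in I$ together with $g = \sum_j\hat\sigma_j s_j$ give that $g$ and all its first derivatives lie in $I$, but, as simple examples show, this is strictly weaker than $g\in I^2$, so one cannot merely invoke second-order vanishing at the point $x$ (which would only control $g$ by $|y-x|^2$ and is insufficient once $|s|$ degenerates faster than $\mathrm{dist}(\cdot,X)$). Instead I would combine these relations with Hadamard's lemma to exhibit $g$ as an $I$-combination whose coefficients are forced to vanish on the common zero locus $X$, working throughout with the module structure over the $\Coo$-ring $\Coo(Y)/I$ rather than with Noetherian or analytic inputs. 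A secondary, routine point is to verify that $U(\sigma)$ is open, so that the local extension $\hat\sigma$ and the kernel subbundle $\ker(\hat\sigma)$ exist; this follows since non-vanishing of the bundle map $\bar{\sigma}\circ p$ is an open condition.
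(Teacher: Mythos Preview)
Your reduction matches the paper's: extend the cosection to a surjective $\hat\sigma \colon E \to \bR_Y$ near $x$, set $g = \hat\sigma(s) = \sum_j \hat\sigma_j s_j$, record $\sum_j \hat\sigma_j\, \partial_k s_j \in I$ for every coordinate $y_k$, and reduce the claim to $t_i\, g(y_i) \to 0$ whenever $y_i \to x$ with $t_i\, s(y_i) \to v$. The gap is exactly where you locate it. The factorisation $g = \sum_k s_k h_k$ with $h_k|_X = 0$ does not follow from $g, \nabla g \in I$ by any routine Hadamard manoeuvre: Hadamard gives $g(y) = \sum_k (y_k - x_k)\int_0^1 \partial_k g\bigl(x + \lambda(y-x)\bigr)\,\dd\lambda$, and although $\partial_k g \in I$ lets you write the integrand as $\sum_j b_{kj}\, s_j$ evaluated at the intermediate points $x + \lambda(y-x)$, those values are not smooth multiples of $s_j(y)$ in any uniform way. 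You are forced to control $s$ along the whole segment rather than only at its endpoint, which is precisely the information a pointwise ideal factorisation does not provide. Your sketch (``Hadamard plus the module structure over $\Coo(Y)/I$'') does not explain how to convert this segment control into coefficients $h_k$ defined on $Y$, and I do not see how to close it in the $\Coo$-category without additional hypotheses on $I$.

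The paper does not attempt the factorisation. It threads a continuous path $\gamma \colon [0,\epsilon) \to Y$ with $\gamma(0) = x$, differentiable for positive times, through (a subsequence of) the $y_i$, reparametrised so that the $y_i$ sit at parameters $\tau_i = 1/t_i \to 0$. The cosection relation combined with the chain rule yields $\frac{\dd}{\dd t}\, g(\gamma(t)) = \sum_j a_j(t)\, s_j(\gamma(t))$ for suitable continuous $a_j$, whence
\[
g(\gamma(\tau_i)) = \int_0^{\tau_i} \sum_j a_j(\lambda)\, s_j(\gamma(\lambda))\,\dd\lambda;
\]
dividing by $\tau_i$ and using $s_j(\gamma(0)) = 0$ gives the vanishing of the limit. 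Along a one-dimensional path the needed second-order control of $g$ is manufactured by integrating a first-order identity, with no ideal-theoretic input beyond membership in $I$. This path-integration device is the replacement for your missing factorisation, and is the new idea you would need to supply.
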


\begin{proof}
Let $c = (x, e) \in C(s)$. We need to show that if $x \notin X(\sigma)$, then $c \in \ker (\bar{\sigma} \circ p)|_{U(\sigma)}$. As $X(\sigma)$ is closed in $X$, by shrinking around $x$ we may assume that $\sigma := \bar{\sigma} \circ p \colon E|_{X_{\Coo}} \to \bR_X$ is surjective and extends to a smooth surjective map $E \to \bR_Y$ on $Y$ and that we have fixed a connection $\nabla$ giving the tangent complex of $\xX$, $\dd s \colon T_Y|_{X_{\Coo}} \to E|_{X_{\Coo}}$. 

We have by definition $\sigma|_{X_{\Coo}} \circ \dd s|_{X_{\Coo}} = 0$. This implies that for every variable $y_j$ of $Y$
\begin{equation} \label{loc 4.3''}
    \sum_{i=1}^r \sigma_i \frac{\partial s_i}{\partial y_j} = O(s)
\end{equation}
where $O(s)$ stands for a linear combination of the functions $s_1, ..., s_r$ with coefficients smooth functions, and $\sigma_i$ are the components of $\sigma$.

Fix $\ell = 1$ and $c_1 = (x, 0, e) \in C(s) \sub p_1^* E$. By the definition of $C(s)$, we have a sequence $(y_k, v_k) \in Y \times \bR$ such that, as $k \to \infty$, $y_k \to x, v_k \to 0$ and for $i=1, ..., r$
\begin{equation} \label{loc 4.3'''}
    \lim_{k \to \infty}  \frac{s_i(y_k)}{|v_k|} = e_i.
\end{equation}

Take any continuous path $f(t) = (\gamma(t), \zeta(t)) \colon [0, \epsilon) \to Y \times \bR$ such that $f(0) = (x, 0)$, $f$ is differentiable on $(0, \epsilon)$, $\zeta(t)$ is positive and strictly increasing on $(0, \epsilon)$ and there exists a sequence $t_k \to 0$ in $(0, \epsilon)$ such that $f(t_k) = (y_k, v_k)$.

By re-parametrizing we may assume that $\zeta(t) = t$ and $t_k = v_k = |v_k|$.

\eqref{loc 4.3''} gives then for $t \neq 0$
\begin{equation*}
    \sum_{i=1}^r \sigma_i(\gamma(t)) \frac{\partial s_i}{\partial y_j} (\gamma(t)) \gamma_j'(t) = O(s(\gamma (t)))
\end{equation*}
and hence by summing over $j=1,...,N$
\begin{equation*}
    \sum_{i=1}^r \sigma_i(\gamma(t)) \frac{\dd}{\dd t} s_i(\gamma(t)) = O(s(\gamma(t))).
\end{equation*}
It is immediate that
\begin{equation*}
    \frac{\dd}{\dd t} \sum_{i=1}^r \sigma_i(\gamma(t)) s_i(\gamma(t)) = \sum_{i=1}^r a_i(t) s_i(\gamma(t))
\end{equation*}
for some continuous functions $a_i(t)$, smooth on $(0, \epsilon)$. We integrate to get
\begin{equation*}
    \sum_{i=1}^r \sigma_i(\gamma(t)) s_i(\gamma(t)) = \int_0^t \sum_{i=1}^r a_i(\lambda) s_i(\gamma(\lambda)) \dd \lambda.
\end{equation*}
Evaluating at $t_k$ and dividing by $t_k$ gives
\begin{equation*}
    \sum_{i=1}^r \sigma_i(y_k) \frac{s_i(y_k)}{t_k} = \frac{1}{t_k} \int_0^{t_k} \sum_{i=1}^r a_i(\lambda) s_i(\gamma(\lambda)) \dd \lambda
\end{equation*}
so letting $k \to \infty$ it follows by \eqref{loc 4.3'''} that
\begin{equation*}
    \sigma(e) = \sum_{i=1}^r \sigma_i(x) e_i = \sum_{i=1}^r a_i(0) s_i(x) = 0
\end{equation*}
which is what we want.
\end{proof}

\subsection{Cosection localization for d-manifolds} 
Let us first recall very briefly the intersection pairing in Borel-Moore homology. For more details, we refer the reader to \cite{Iversen} or \cite{Bredon}.

Let $W$ be a smooth oriented manifold of dimension $d$. Then, for any two closed subsets $Z_1, Z_2$, there exists a (bilinear) intersection pairing
\begin{align} \label{intersection pairing}
\cap \colon H_i(Z_1) \times H_j(Z_2) \lr H_{i+j-d}(Z_1 \cap Z_2).
\end{align}
It has a concrete geometric interpretation when $W$ is real analytic and $Z_1, Z_2$ are closed real analytic subsets of $W$, given for example in \cite[Section~1]{Ginzburg} (cf. also \cite[Appendix~A.9]{IonelParker} for a short treatment of the pairing).
\medskip

Let now $\xX$ be a compact, oriented d-manifold of virtual dimension $n$. By Theorem-Definition~\ref{presentation as a d-manifold}, $\xX$ is equivalent to a principal d-manifold $\sS_{Y,E,s}$. We fix a choice of such presentation
\begin{equation} \label{principal model 5.2}
    \xX \simeq \sS_{Y,E,s}.
\end{equation}

Thus we have an embedding $X_{\cC^\infty} \to Y$, given by the data of an open $Y \sub \bR^N$, a smooth, oriented vector bundle $E$ on $Y$ of rank $r$, a smooth section $s$ of $E$ with $X$ being its zero locus and a surjection $p \colon E|_{X_{\Coo}} \to \Ob_{\xX}$. We have $n = N-r$.

Write $\sigma'$ for the composition ${\sigma} \circ p \colon E|_{X_{\Coo}} \to \bR_X$ and $U(\sigma) = X - X(\sigma)$, the locus where $\sigma'$ is surjective.
\medskip

Up to possible shrinking of $Y$, we may take $\sigma_Y' \colon E \to \bR_Y$ to be a smooth extension of $\sigma'$ to the ambient space $Y$ with degeneracy locus $Y(\sigma_Y')$ and complement $U(\sigma_Y') = Y - Y(\sigma_Y')$. 

We now assume that the degeneracy locus $X(\sigma)$ is a Euclidean Neighbourhood Retract (ENR). In particular, this implies that since $X(\sigma)$ is a closed subset of $X$, there is an open neighbourhood $V$ of $X(\sigma)$ in $X$ that retracts onto $X(\sigma)$ and such that the natural map 
\begin{align} \label{loc 5.5}
H_*(X(\sigma)) \lr H_*(V)
\end{align}
induced by the inclusion morphism is an isomorphism.

$\sigma'|_{X-V}$ is then surjective on the closed subset $X-V$ of $Y$ and extends smoothly to the open neighbourhood $U(\sigma_Y')$. Following \cite{KiemLiSmoothCosection}, after choosing a metric on $E$, we can pick a smooth section $\check{\sigma}_Y \in \cC^\infty(E)$ that is a small perturbation of the zero section of $E$ and moreover satisfies that the composition 
\begin{align} \label{def of sigma check}
\sigma'|_{X-V} \circ \check{\sigma}_Y|_{X-V}
\end{align} 
is nonzero everywhere on $X-V$. We denote the image of $\check{\sigma}_Y$ inside $E$ by $\xi$.

We now have homology classes 
\begin{align*}
[C(s)] \in H_N(C(s)) \text{  and  } [\xi] \in H_N(\xi).
\end{align*} 
The latter exists due to the fact that $\xi$ is diffeomorphic to $Y$, which is an oriented manifold. Clearly, being a section of a vector bundle, $\xi$ is a closed subset of the smooth manifold $E$.

By Proposition~\ref{support of cone}, $C(s)$ is contained in $E(\sigma) = E|_{X(\sigma)} \cup \ker(\sigma')|_{U(\sigma)}$, so by the definition of the section $\xi$, we deduce that $C(s) \cap \xi$ maps diffeomorphically under the bundle projection $q \colon E \to Y$ to a closed subspace of $V$.

With these in place, we may apply the homological intersection pairing \eqref{intersection pairing} to the closed subsets $C(s)$ and $\xi$ of the smooth, oriented manifold $E$ of dimension $N+r$ and the homology classes $[C(s)], [\xi]$ respectively to obtain a class
\begin{align} \label{loc 5.3}
[C(s)] \cap [\xi] \in H_{N-r}(C(s) \cap \xi) = H_n(C(s) \cap \xi).
\end{align}

Hence, pushing forward via $q$, the class \eqref{loc 5.3} gives rise to a homology class in $H_{n} (V)$ and hence, applying the isomorphism \eqref{loc 5.5}, to a class
\begin{align} \label{loc 5.7}
[\xX]_{\loc, \sigma}\virt := [C(s)] \cap [\xi] \in H_{n}(X(\sigma)).
\end{align}

By standard homological arguments, one may easily verify that $[\xX]_{\loc, \sigma}\virt$ is independent from the choice of extension $\sigma_Y'$ and section $\check{\sigma}_Y$. By \cite{KiemLiSmoothCosection}, it satisfies that 
$$
i_\ast [\xX]_{\loc, \sigma}\virt = [\xX]\virt \in H_n(X)
$$ 
where $i \colon X(\sigma) \to X$ is the natural inclusion map.

We moreover have the following proposition. 

\begin{prop} \label{prop 5.2}
$[\xX]_{\loc,\sigma}\virt$ is independent from the choice of presentation $\xX \simeq \sS_{Y,E,s}$ and thus well defined.
\end{prop}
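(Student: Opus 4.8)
The plan is to show that any two presentations of $\xX$ as a principal d-manifold yield the same localized class in $H_n(X(\sigma))$ by comparing them to a common refinement, exploiting the bordism-invariance philosophy that underlies the unlocalized class. The key point is that the localized class \eqref{loc 5.7} is constructed from the homological normal cone $[C(s)]$ and the perturbation $[\xi]$ via the intersection pairing in $E$, and both inputs behave predictably under the standard stabilization and equivalence moves on presentations.

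First I would reduce to a manageable set of moves relating two presentations $\xX \simeq \sS_{Y,E,s}$ and $\xX \simeq \sS_{Y',E',s'}$. By Joyce's results (Theorem-Definition~\ref{presentation as a d-manifold} and the underlying theory in \cite{JoyceDMan}), any two such presentations are related, after shrinking, by a sequence of elementary operations: restriction to open subsets, and stabilization $(Y,E,s) \mapsto (Y \times \bR^k, E \oplus \underline{\bR^k}, s \oplus \mathrm{id})$ by adding a trivial factor with a transverse linear section. It suffices to check invariance under each such move. Restriction is harmless because the relevant cone and perturbation data are local over a neighbourhood retracting onto $X$, and the class lands in $H_n(X(\sigma))$ which is intrinsic to $\xX$ and $\sigma$; the degeneracy locus $X(\sigma)$ is defined purely via $\Ob_{\xX}$ and $\sigma$, independent of the chart, so this is where the intrinsic nature of $\sigma \colon \Ob_{\xX} \to \bR_X$ does the work.

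The main step is the stabilization move, and here the guiding principle is compatibility of the normal cone with products. Under stabilization the section becomes $s \oplus \id$, so the normal cone satisfies $C(s \oplus \id) = C(s) \times \{0\} \subset E \oplus \underline{\bR^k}$, reflecting the fact that the new trivial directions are cut out transversely and contribute no cone. Correspondingly, I would arrange the extension $\sigma_Y'$ of the cosection to act only through the original $E$ factor (this is legitimate since the surjection $p$ factors through the original $E|_{X_{\Coo}} \to \Ob_{\xX}$, which is unchanged), and choose the perturbation $\xi$ on the stabilized chart compatibly. Then the intersection pairing $[C(s \oplus \id)] \cap [\xi]$ computed in $E \oplus \underline{\bR^k}$ reduces, by the multiplicativity of Borel-Moore intersection products over the transverse trivial factor, to $[C(s)] \cap [\xi]$ computed in $E$, landing in the same $H_n(C(s) \cap \xi)$ after the diffeomorphic identification under the bundle projection. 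Pushing forward via $q$ and applying \eqref{loc 5.5} then gives equality of the two localized classes in $H_n(X(\sigma))$.

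The hardest part will be making the reduction to elementary moves fully rigorous in the derived-differential setting and verifying that each move preserves not just the unlocalized class (which is already known by \cite{JoyceDMan}) but the finer localized refinement, including the freedom in the choices of extension $\sigma_Y'$ and perturbation $\check{\sigma}_Y$. The technical core is the product compatibility of the topological normal cone under stabilization together with the multiplicativity of the intersection pairing; one must ensure the support containment from Proposition~\ref{support of cone} is respected in the stabilized chart so that $C(s \oplus \id) \cap \xi$ still maps diffeomorphically into the neighbourhood $V$ of $X(\sigma)$. Once these compatibilities are in place, the independence of $[\xX]_{\loc,\sigma}\virt$ from the presentation follows, and combined with the already-established independence from the auxiliary choices of $\sigma_Y'$ and $\check{\sigma}_Y$, the class is well defined.
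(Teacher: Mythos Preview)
Your overall strategy matches the paper's: reduce to a standard comparison of two presentations, observe that the normal cone and the perturbation section transform compatibly, and conclude via the intersection pairing. The core computation you describe --- that under the added transverse factor the cone picks up only a trivial summand and the perturbation can be chosen compatibly --- is exactly what the paper does.

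The one point where your framing diverges is the reduction step. You assert that any two presentations are related by restrictions and global stabilizations $(Y,E,s)\mapsto (Y\times\bR^k,\,E\oplus\underline{\bR^k},\,s\oplus\id)$. The paper does not (and I believe cannot easily) reduce to such product moves. Instead it invokes \cite[Theorems~1.4.8, 4.9, 4.34]{JoyceDMan} to put an arbitrary equivalence into the \emph{submersion standard form}: a submersion $f\colon Y_1\to Y_2$ together with a surjective bundle map $\hat f\colon E_1\to f^*E_2$ with $\hat f\circ s_1=f^*s_2$. Only after choosing a splitting does this look \emph{locally} like your stabilization, with $E_1=f^*E_2\oplus E_3$ and $s_1=(s_2,y_{N_2+1},\dots,y_{N_1})$; globally $E_3=\ker\hat f$ need not be trivial and $Y_1$ need not be a product over $Y_2$. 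The paper then shows $C(s_1)=C(s_2)\oplus E_3|_X$ inside $f^*E_2\oplus E_3$, builds $\sigma_{Y_1}'$ and $\check\sigma_{Y_1}$ by pulling back along $f$ and $\hat f$, and compares the two intersection products directly.

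So your argument would go through once you replace the ``sequence of stabilization moves'' reduction by the single submersion standard form; the rest of what you wrote is then precisely the paper's computation.
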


\begin{proof}
Suppose that $\xX \simeq \sS_{Y_1,E_1,s_1} \simeq \sS_{Y_2,E_2,s_2}$. By \cite[Theorems~1.4.8, 4.9, 4.34]{JoyceDMan}, up to possibly shrinking $Y_1$ around $X$, we may reduce to the case of an equivalence in the following standard form: we have a submersion $f \colon Y_1 \to Y_2$ and a morphism of vector bundles $\hat{f} \colon E_1 \to f^* E_2$ such that $\hat{f} \circ s_1 = f^* s_2$. The morphism of tangent complexes
\begin{align} \label{loc 5.9}
    \xymatrix{
    T_{Y_1} |_X \ar[r]^-{\dd s_1} \ar[d]_-{\dd f} & E_1|_X \ar[d]^-{\hat{f}} \\
    f^* T_{Y_2} |_X \ar[r]^-{\dd (f^* s_2)} & f^\ast E_2 |_X
    }
\end{align}
is a quasi-isomorphism on fibers over $x \in X$ and thus we must have that $E_1|_X \xrightarrow{\hat{f}} f^* E_2|_X$ is surjective, so after further possible shrinking of $Y_1$, we may assume that $\hat{f}$ is surjective. 

This situation can be locally described as follows: Let $N_1 = \dim Y_1$, $N_2 = \dim Y_2 = N_1 -p$, $y_1, ..., y_{N_1}$ be local coordinates for $Y_1$, $y_1, ..., y_{N_2}$ local coordinates for $Y_2$ and $E_1 = f^* E_2 \oplus \bR_{Y_1}^p$ be a splitting for $\hat{f}$. Then, with respect to this splitting, we have $s_1(y_1, ..., y_{N_1}) = \left( s_2(y_1, ..., y_{N_2}), y_{N_2 + 1}, ..., y_{N_1} \right)$.

Let $i_1 \colon X \to Y_1$, $i_2 \colon X \to Y_2$ be the inclusion maps. They satisfy $i_2 = f \circ i_1$ and fit into a commutative diagram
\begin{align*}
\xymatrix{
X \ar[dr]_-{i_2} \ar[r]^-{i_1} & Y_1 \ar[d]^-{f} \\
 & Y_2.
}
\end{align*}

It follows that the projection
$$\hat{f} \colon E_1 \lr f^* E_2,$$ 
restricted to $X$ via pullback by the inclusion $i_1$, induces a projection map $\hat{f}|_X \colon C(s_1) \to C(s_2)$ which makes $C(s_1)$ into a vector bundle over $C(s_2)$.

By restricting to $X$ and using the inclusion $C(s_2) \sub E_2|_X$ (cf. Theorem-Definition~\ref{topological intrinsic normal cone definition}), we also have a commutative diagram
\begin{align*}
\xymatrix{
 & f^* E_2 \ar[d] \\
C(s_2) \ar[r]_-{j_2} \ar[ur]^-{j_1} & E_2.
}
\end{align*}

Now, let $\sigma_{Y_2}' \colon E_2 \to \bR_{Y_2}$ be a smooth extension of the composition $\sigma \circ p_2 \colon E_2|_{X_{\cC^\infty}} \to \bR_X$. Then, by \eqref{loc 5.9}, $\sigma_{Y_1}' := f^* \sigma_{Y_2} \circ \hat{f}$ is a smooth extension of the corresponding composition $\sigma \circ p_1 \colon E_1|_{X_{\cC^\infty}} \to \bR_X$. 

Pick a splitting $\check{f} \colon f^* E_2 \to E_1$ of $\hat{f}$. Moreover, let $\check{\sigma}_{Y_2}$ be a smooth section of $E_2$ as in \eqref{def of sigma check}. Then, up to rescaling $\check{f}$, the composition $\check{\sigma}_{Y_1} := \check{f} \circ f^* \check{\sigma}_{Y_2}$ also satisfies the same properties in regards to the cosection $\sigma_{Y_1}'$.

Using the splitting, write $E_1 = f^* E_2 \oplus E_3$ so that $\hat{f}$ is the projection onto the first factor and $\check{f}$ is the inclusion of the first factor.

We then have by definition 
$$C(s_1) = C(s_2) \oplus E_3|_X$$
where $C(s_2)$ is a closed subspace of $f^* E_2$ via the embedding $j_1$ and 
$$\check{\sigma}_{Y_1} = f^* \check{\sigma}_{Y_2} \oplus 0.$$

The intersections $C(s_1) \cap \xi_1, C(s_2) \cap \xi_2$ are closed subspaces of $V$, using Lemma~\ref{support of cone} and the construction of $\xi_1$ and $\xi_2$, and we see that they must be equal.

Under this identification it is routine to verify that the properties of the intersection pairing imply that
\begin{align*}
[C(s_1)] \cap [\xi_1] = [C(s_2)] \cap [\xi_2] \in H_n(X(\sigma))
\end{align*}
where we have suppressed the pushforward to $V$ and the isomorphism \eqref{loc 5.5} on homology. \end{proof}

If $X(\sigma)$ is not an ENR or more generally does not admit a neighbourhood $V$ in $X$ satisfying the above properties, we may still perform the construction with the following modifications, following \cite[Subsection~2.6.5]{BorisovJoyce}: Instead of $V$, we consider a sequence of open neighbourhoods 
$$V_1 \supset V_2 \supset V_3 \supset ...$$ 
of $X(\sigma)$ in $X$ satisfying
\begin{align} \label{loc 5.10}
\bigcap_{i=1}^\infty V_i = X(\sigma).
\end{align}

The above construction gives a sequence of homology classes for $i=1,2, \ldots$
$$[\xX]_{\loc, \sigma, i}\virt \in H_n(V_i)$$
compatible under restriction from $V_i$ to $V_{i+1}$ and hence an element
\begin{align} \label{loc 5.11}
[\xX]_{\loc, \sigma}\virt \in \varprojlim_{i \geq 1} H_n(V_i).
\end{align}

Borel-Moore homology satisfies Milnor's continuity axiom (cf. for example \cite[Appendix A]{IonelParker}) and hence 
\begin{align} \label{loc 5.12}
\varprojlim_{i \geq 1} H_n(V_i) = H_n(\cap_{i=1}^\infty V_i).
\end{align}
Combining \eqref{loc 5.10}, \eqref{loc 5.11} and \eqref{loc 5.12}, we again obtain a localized homology class
\begin{align} \label{loc 5.13}
[\xX]_{\loc, \sigma}\virt \in H_n(X(\sigma)).
\end{align}

Summarizing our discussion, we may now give the following definition.

\begin{thm-defi} \label{localization by real cosection}
Let $\xX$ be a compact, oriented d-manifold of virtual dimension $n$ and $\sigma \colon \Ob_{\xX} \to \bR_X$ a real cosection. Then the cosection localized virtual fundamental class $[\xX]_{\loc, \sigma}\virt$ of $\xX$ is defined by \eqref{loc  5.7} (and more generally by \eqref{loc 5.13} when $X(\sigma)$ is not a Euclidean Neighbourhood Retract) and satisfies
\begin{align*}
    i_* [\xX]_{\loc,\sigma}\virt = [\xX]\virt \in H_n(X)
\end{align*}
where $i \colon X(\sigma) \to X$ denotes the inclusion.
\end{thm-defi}

\begin{rmk} \label{meromorphic cosection remark}
In order to define the cosection localized virtual fundamental class, we do not need a globally defined cosection $\sigma$ on $\xX$, but only a surjective cosection $\sigma_{\uU(\sigma)} \colon \Ob_{\uU(\sigma)} \to \bR_{U(\sigma)}$ defined on the open sub-d-manifold $\uU(\sigma)$ of $\xX$ whose underlying topological space is an open subset $U(\sigma)$ inside $X$. We will use this observation in Subsection~\ref{cos loc for derived sch}.
\end{rmk}

\begin{rmk}
The above definition generalizes the statement of Theorem~\ref{vanishing for surjective real cosection} when one only considers cosections of d-manifolds (and not weak cosections).
\end{rmk}

\begin{exam}
Let $Y \sub \bR^N$ be the open unit disk, $E$ an oriented, rank $N$ vector bundle on $Y$, $s$ the zero section of $E$ and $X \simeq \sS_{Y,E,s}$ so that the underlying topological space is $X = Y$. Consider a cosection $\sigma \colon E \to \bR_Y$ whose zero locus $Y(\sigma)$ is the origin, viewed as a smooth manifold.

Clearly $C(s) = Y \sub E$ is the zero section. Let $V$ be a disk of sufficiently small radius $\epsilon$ centered at the origin $0$ . By trivializing $E \cong \bR^N \times \bR^N$ around $0$, we write $y=(y_1, ..., y_N)$ for the coordinates of $Y$ and $e=(e_1, ..., e_N)$ for the fiber coordinates of $E$. Then we may assume that $\sigma$ is the map
$$\sigma(y,e) = y_1 e_1 + \ldots + y_N e_N.$$

We can split $\sigma$ away from $0$ using the section $\check{\sigma} \colon \bR_V \to E|_V$ defined by
$$\check{\sigma} (y) = \begin{cases}
( y, \frac{y}{|y|} ) \text{ if } \frac{\epsilon}{2} < |y| < \epsilon \\
\\
( y, \frac{2}{\epsilon} y) \text{ if } |y| \leq \frac{\epsilon}{2}
\end{cases}$$
After smoothing and extending $\check{\sigma}$ to $Y$ (and possibly rescaling), we have a splitting and obtain a section $\xi$ of $E$ as the graph of $\check{\sigma}$. 

It is then immediate that $[C(s)] \cap [\xi] = \pm [0] \in H_0(Y(\sigma))$, where the sign is determined by the orientation of $E$ and hence
$$[Y]_{\loc,\sigma}\virt = \pm [0] \in H_0(Y(\sigma)).$$
\end{exam}

\subsection{Cosection localization for $(-2)$-shifted symplectic derived schemes} \label{cos loc for derived sch}

The methods developed in this section together with the strategy employed in the proof of Theorem~\ref{vanishing for non-degenerate cosection} allow us to significantly generalize its statement. Before we proceed, we introduce a bit of terminology.

\begin{defi}
A \emph{meromorphic cosection} on a derived scheme $\unxX$ is a cosection $\sigma$ defined on an open derived subscheme $\unuU (\sigma)$ of $\unxX$.

When $\unxX$ is $(-2)$-shifted symplectic with symplectic form $\omega_{\unxX}$, we say that $\sigma$ is non-degenerate if it is a non-degenerate cosection of the $(-2)$-shifted symplectic derived scheme $\left( \unuU(\sigma), \omega_{\unuU (\sigma)} \right)$, where $\omega_{\unuU (\sigma) } := \omega_{\unxX}|_{\unuU (\sigma) }$. If $U(\sigma)$ is the underlying topological space of $\unuU (\sigma)$, we use the notation $X(\sigma) = X - U (\sigma)$ for the locus where $\sigma$ is not defined or degenerate.
\end{defi}

We are now in position to generalize Theorem~\ref{vanishing for non-degenerate cosection}.

\begin{thm} \label{localization for derived scheme non-degenerate cos}
Let $(\unxX, \omega_{\unxX})$ be a proper, oriented $(-2)$-shifted symplectic derived scheme of virtual dimension $n$ with a meromorphic non-degenerate cosection $\sigma$. Then there exists a cosection localized virtual fundamental class $[\xX_{dm}]_{\loc, \sigma}\virt \in H_n(X(\sigma))$ satisfying
$$i_* [\xX_{dm}]_{\loc, \sigma}\virt = [\xX_{dm}]\virt \in H_n(X)$$
where $i \colon X(\sigma) \to X$ is the inclusion map.
\end{thm}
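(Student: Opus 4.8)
The plan is to combine the local construction from the proof of Theorem~\ref{vanishing for non-degenerate cosection} with the cosection localization machinery of Theorem-Definition~\ref{localization by real cosection}. The key observation is that the argument in Theorem~\ref{vanishing for non-degenerate cosection} was entirely local and produced, on each analytic Darboux chart where $\sigma$ is non-degenerate, a $q$-adapted subbundle $E^-$ together with an induced \emph{surjective} real cosection $\sigma_u^+ \colon \Ob_{\xX_{dm}}|_u \to \bR$. Crucially, that argument never used that $\sigma$ was defined everywhere — it only used non-degeneracy at the points under consideration. Here, by the definition of a meromorphic non-degenerate cosection, $\sigma$ is a genuine non-degenerate cosection of the open derived subscheme $\unuU(\sigma)$ with underlying topological space $U(\sigma) = X - X(\sigma)$.

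First I would apply Construction~\ref{construction of d-man from derived scheme} to produce the d-manifold $\xX_{dm}$ associated to $(\unxX, \omega_{\unxX})$, with underlying topological space $X$ (identified with $X^{an}$), using Proposition~\ref{virtual class is canonical} to remain free in the choices of $q$-adapted subbundles. Over $U(\sigma)$, where $\sigma$ is non-degenerate, I would make the choices of $E^-$ exactly as in the proof of Theorem~\ref{vanishing for non-degenerate cosection}: on each analytic Darboux chart meeting $U(\sigma)$, split $E = \oO_V \oplus \ker(\sigma_E)$ orthogonally with respect to $q$, and take $E^-$ to be the direct sum of the imaginary parts. This yields, exactly as before, a surjective real cosection
\begin{equation*}
    \sigma_{\uU(\sigma)} \colon \Ob_{\uU(\sigma)} \lr \bR_{U(\sigma)}
\end{equation*}
on the open sub-d-manifold $\uU(\sigma)$ of $\xX_{dm}$ lying over $U(\sigma)$, where here $\uU(\sigma)$ denotes the open sub-d-manifold of $\xX_{dm}$ whose underlying topological space is $U(\sigma)$.

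Next, by Remark~\ref{meromorphic cosection remark}, the construction of the cosection localized virtual fundamental class in Theorem-Definition~\ref{localization by real cosection} does not require a globally defined cosection on $\xX_{dm}$, but only a surjective real cosection defined on an open sub-d-manifold. I would therefore invoke Theorem-Definition~\ref{localization by real cosection}, applied to the surjective cosection $\sigma_{\uU(\sigma)}$, to obtain a localized class $[\xX_{dm}]_{\loc, \sigma}\virt \in H_n(X(\sigma))$ together with the required identity $i_* [\xX_{dm}]_{\loc, \sigma}\virt = [\xX_{dm}]\virt$. The degeneracy locus of the induced real cosection is precisely $X(\sigma)$, since $\sigma_u^+$ is surjective at every $u \in U(\sigma)$ by the non-degeneracy argument.

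The main obstacle I expect is not the homological construction, which is handed to us by Theorem-Definition~\ref{localization by real cosection}, but rather verifying the \emph{global} compatibility of the locally chosen $q$-adapted subbundles over $U(\sigma)$ so that they glue to a genuine real cosection on the open sub-d-manifold $\uU(\sigma)$, as opposed to only a weak cosection as in the surjective vanishing case. One must check that the surjections $\sigma_u^+$ assemble into a morphism of sheaves $\Ob_{\uU(\sigma)} \to \bR_{U(\sigma)}$ compatibly with the homotopy-coherent gluing data of the Darboux charts used in Construction~\ref{construction of d-man from derived scheme}; since $\sigma$ itself is a morphism in $D^b(\Coh U(\sigma))$ and descends through the quotients $E \to E^+$ defining $\xX_{dm}$, this compatibility follows from the functoriality of the construction in \cite{BorisovJoyce}, but it is the step requiring the most care. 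A secondary point is confirming that $X(\sigma)$ as defined here (the complement of $U(\sigma)$) agrees with the degeneracy locus appearing in Theorem-Definition~\ref{localization by real cosection}, which is immediate from the surjectivity of $\sigma_{\uU(\sigma)}$ on all of $U(\sigma)$.
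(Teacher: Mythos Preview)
Your proposal is essentially correct and follows the same strategy as the paper: reuse the local construction from Theorem~\ref{vanishing for non-degenerate cosection} over the non-degeneracy locus $U(\sigma)$ to produce a surjective real cosection on an open sub-d-manifold, then invoke Remark~\ref{meromorphic cosection remark} and Theorem-Definition~\ref{localization by real cosection}.

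There is one refinement worth noting. You aim to produce the surjective real cosection on \emph{all} of $\uU(\sigma)$, and you correctly flag the gluing of the locally defined $\sigma_u^+$ across charts as the delicate step. The paper sidesteps this verification entirely: since $X(\sigma)$ is Zariski closed it is an ENR, so one first fixes an open neighbourhood $V \supset X(\sigma)$ with $H_*(X(\sigma)) \cong H_*(V)$, and then runs the construction of Theorem~\ref{vanishing for non-degenerate cosection} only near the compact set $X - V \sub U(\sigma)$. This yields the surjective real cosection on some open $W$ with $X - V \sub W \sub U(\sigma)$, avoiding any need to control the choices of $E^-$ in charts straddling $X(\sigma)$. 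The localized class then lands in $H_n(X-W) \hookrightarrow H_n(V) \cong H_n(X(\sigma))$. So the paper trades your global gluing check over $U(\sigma)$ for a purely topological use of the ENR property; either route leads to the same conclusion, but the paper's buys you not having to argue functoriality of $\sigma_u^+$ through the Borisov--Joyce compatibility data.
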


\begin{proof}
Since $X(\sigma)$ is a Zariski closed subset of $X$, it is an ENR and therefore there exists an open neighbourhood $V$ of $X(\sigma)$ that retracts onto $X(\sigma)$ such that the natural map
$$H_*(X(\sigma)) \lr H_*(V)$$
induced by inclusion is an isomorphism.

We may now follow the same strategy as in the proof of Theorem~\ref{vanishing for non-degenerate cosection} to associate to $(\unxX, \omega_{\unxX})$ a d-manifold $\xX_{dm}$ together with the data of an open sub-d-manifold $\wW$ whose underlying topological space $W$ is contained in $U(\sigma)$ and is an open neighbourhood of $X-V$ inside $U(\sigma)$ and a surjective real cosection $\sigma_\wW$ on $\wW$.

Theorem-Definition~\ref{localization by real cosection} and Remark~\ref{meromorphic cosection remark} then imply the existence of a localized virtual fundamental class $[\xX_{dm}]_{\loc, \sigma}\virt$ in $H_n(X-W)$ and hence in $H_n(V) \cong H_*(X(\sigma))$, which is independent of all choices involved and thus well defined and satisfies the required properties.
\end{proof}

\section{Localization by Complex Cosections} \label{Complex Cosections Section}

As usual, let $\xX$ be a compact, oriented d-manifold, $X_{\cC^\infty}$ the associated $\cC^\infty$-scheme and $X$ the underlying topological space. In this section, we study a certain kind of complex cosection that has similar properties to a cosection for an algebraic scheme with perfect obstruction theory. This allows us to localize in a more robust and systematic way of algebraic flavor compared to the previous section. 

Similarly to the case of real cosections, we write $\bC_X$ to denote both the trivial complex line bundle on $X_{\cC^\infty}$ and the trivial complex line bundle on $X$.

\begin{defi}
A \emph{complex cosection} is a morphism $\sigma \colon \Ob_{\xX} \to \bC_{X}$ of sheaves on $X_{\cC^\infty}$.
\end{defi}

The motivation behind the results of this section is the following observation: Suppose that $X$ is a complex projective scheme equipped with a perfect obstruction theory with cosection $\sigma$. Then, writing as usual $i \colon X(\sigma) \to X$ for the inclusion, there is a cosection localized virtual fundamental cycle $[X]\virt_{\loc, \sigma} \in H_*(X(\sigma))$ satisfying $i_* [X]\virt_{\loc, \sigma} = [X]\virt \in H_*(X)$, defined by an explicit formula in \cite{KiemLiCosection} which utilizes the blowup of $X$ along $X(\sigma)$. 

In \cite[Section~14.5]{JoyceDMan}, Joyce defines a truncation functor from schemes with perfect obstruction theory to d-manifolds, which acts functorially on their virtual fundamental classes. Applying this functor to $X$, one gets a d-manifold $\xX$ with an induced complex cosection $\sigma$ and a virtual fundamental class, which is thus automatically naturally localized by the cosection $\sigma$ by an explicit formula (see equation \eqref{formula of localized Gysin map} and Theorem-Definition~\ref{localization by complex cosection} below for the formula in our context).

One can then ask how close a d-manifold $\xX$ with a complex cosection $\sigma$ can be to a truncation of an algebraic (or complex analytic) scheme with a perfect obstruction theory with cosection to admit this more robustly described cosection localized virtual fundamental class.

As we will see, a sufficient set of holomorphicity assumptions regarding the topology of $\xX$ and the cosection $\sigma$ is given by the following setup. From now on in this section, we proceed to make these assumptions.

\begin{setup} \label{holomorphicity assumption} We say that $\xX$ and $\sigma$ come from geometry if:
\begin{enumerate}
    \item $X$ admits the structure of a complex analytic space $X^{an}$ inducing a structure of a $\cC^\infty$-scheme on $X$, denoted by $X^{an}_{\cC^\infty}$. 
    \item There exists a closed embedding $X^{an} \hookrightarrow W$, where $W$ is a complex manifold.
    \item There exists a closed embedding of $\cC^\infty$-schemes $X_{\cC^\infty}^{an} \hookrightarrow X_{\cC^\infty}$. 
    \item The locus $X(\sigma)$, where $\sigma$ is not surjective, is a complex analytic closed subset of $X^{an}$.
    \item The image of $\sigma|_{X_{\Coo}^{an}}$ is an $\oO_{X_{ \cC^\infty}^{an}}$-submodule of $\bC_X$ generated by an ideal $I \sub \oO_{X^{an}}$ whose zero locus is $X(\sigma)$.
\end{enumerate}
\end{setup}

We briefly clarify the above conditions. 

In (1), if $X^{an}$ is analytically locally described as the vanishing locus of holomorphic functions $f_1, ..., f_r$ defined on a polydisc $D$ in $\bC^N$, $X_{\cC^\infty}^{an}$ is locally described by the spectrum of the $\cC^\infty$-ring $\cC^\infty(D) / (\mathrm{Re} f_1, \mathrm{Im} f_1, ..., \mathrm{Re} f_r, \mathrm{Im} f_r)$. Clearly holomorphic transition functions between such charts induce smooth transition functions so that we obtain a $\cC^\infty$-scheme.

In (5), if $I$ is locally generated by a set of holomorphic functions $f_1, ..., f_r$ on $X^{an}$ whose common zero set is $X(\sigma)$, we consider the $\oO_{X_{\Coo}^{an}}$-submodule of $\bC_X = \bR_X \oplus \bR_X$ generated by the pairs 
$$(\mathrm{Re} f_1, \mathrm{Im} f_1), (-\mathrm{Im} f_1, \mathrm{Re} f_1), ..., (\mathrm{Re} f_r, \mathrm{Im} f_r), (-\mathrm{Im} f_r, \mathrm{Re} f_r)$$
of functions on $X_{\Coo}^{an}$. (5) requires the image of $\sigma|_{X_{\Coo}^{an}}$ to be locally of this form for some choice of generators of $I$.

Therefore, Setup~\ref{holomorphicity assumption} essentially says that $\xX$ and the cosection $\sigma$ are determined by complex analytic data up to possible thickening of the underlying $\cC^\infty$-scheme. 

\begin{rmk}
By design, if $X^{an}$ is a complex analytic space with an embedding $X^{an} \to W$ into a complex manifold and a perfect obstruction theory with holomorphic cosection $\sigma^{an}$, then the d-manifold $\xX$ obtained by applying the truncation functor in \cite[Section~14.5]{JoyceDMan}, and its induced complex cosection $\sigma$ satisfy the conditions of Setup~\ref{holomorphicity assumption}. The same holds for a complex scheme $X$ endowed with the same data.
\end{rmk}

\medskip

By Theorem-Definition~\ref{presentation as a d-manifold}, $\xX$ is equivalent to a principal d-manifold $\sS_{Y,E,s}$. We fix a choice of such presentation
\begin{equation} \label{principal model}
    \xX \simeq \sS_{Y,E,s}.
\end{equation}

Thus we have an embedding $X_{\cC^\infty} \to Y$, given by the data of an open $Y \sub \bR^N$, a smooth, oriented vector bundle $E$ on $Y$ of rank $r$, a smooth section $s$ of $E$ with $X$ being its zero locus and a surjection $p \colon E|_{X_{\Coo}} \to \Ob_{\xX}$. 

Write $\sigma'$ for the composition $\sigma \circ p \colon E|_{X_{\Coo}} \to \bC_X$ and $U(\sigma) = X - X(\sigma)$, the locus where $\sigma'$ is surjective. Let 
$$E(\sigma) = E|_{X(\sigma)} \cup \ker \left( \sigma' |_{U(\sigma)} \right). $$

Write $j \colon E|_{X(\sigma)} \to E(\sigma)$ for the inclusion.

In order to localize $[\xX]\virt$, we need to use an appropriate notion of resolution of the cosection, as in \cite{KiemLiCosection} and \cite{KiemLiQuantum}.

\begin{defi}
We say that a proper map $\rho \colon \ti{X} \to X$ of topological spaces is a \emph{$\sigma$-regularizing map} with respect to the presentation \eqref{principal model} if:
\begin{enumerate}
    \item $\rho|_{\rho^{-1}(U(\sigma))} \colon \rho^{-1}(U(\sigma)) \to U(\sigma)$ is a homeomorphism. Write $D$ for the complement of $U(\sigma)$ in $\ti{X}$.
    \item The pullback $\rho^* \sigma' \colon \rho^*E|_X \to \bC_{\ti{X}}$ factors through a surjection 
    \begin{align} \label{loc 4.2'}    
        \rho^*E|_X \to L
    \end{align} 
    where $L$ is a complex line bundle on $\ti{X}$ together with a continuous section $t \in \Gamma(\ti{X}, L^\vee)$ whose zero locus is $D$.
\end{enumerate}

We write $\rho(\sigma) \colon D \to X(\sigma)$ for the induced map. Moreover let $E'$ be the kernel bundle of the surjection \eqref{loc 4.2'}. We get an induced commutative diagram of topological spaces
\begin{align} \label{lifting diagram}
    \xymatrix{
    E' \ar[r] \ar[d]_-{\rho'} & \rho^* E|_X \ar[d] \\
    E(\sigma) \ar[r] & E|_X.
    }
\end{align}

We say that a $\sigma$-regularizing map satisfies \emph{homological lifting} if any homology class $\gamma \in H_i(E(\sigma))$ can be written as
\begin{align} \label{decomposition of gamma}
    \gamma = j_* \alpha + \rho'_* \beta
\end{align}
for homology classes $\alpha \in H_i(E|_{X(\sigma)})$ and $\beta \in H_i(E')$.
\end{defi}

The next proposition shows that when $\xX$ and $\sigma$ come from geometry, there exists an induced $\sigma$-regularizing map that satisfies homological lifting.

\begin{prop} \label{prop 4.10}
Suppose that the conditions in Setup~\ref{holomorphicity assumption} hold and we have fixed a presentation \eqref{principal model}. Then there exists an induced $\sigma$-regularizing map $\rho \colon \ti{X} \to X$ that satisfies homological lifting.
\end{prop}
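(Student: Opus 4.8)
The plan is to take $\rho$ to be the topological space underlying the complex analytic blow-up of $X^{an}$ along the ideal $I$ provided by Setup~\ref{holomorphicity assumption}(5), and then to check the two $\sigma$-regularizing axioms together with homological lifting. Concretely, I would set $\rho\colon \ti X\to X$ to be the blow-down of $\pi\colon \mathrm{Bl}_I X^{an}\to X^{an}$, passing to underlying spaces. Properness of $\rho$ is immediate, as the analytic blow-up along a coherent ideal is projective. Since $Z(I)=X(\sigma)$ by Setup~\ref{holomorphicity assumption}(4)--(5), the ideal $I$ is invertible over $U(\sigma)=X-X(\sigma)$, so $\pi$, and hence $\rho$, restricts to an isomorphism over $U(\sigma)$; this is axiom (1), with $D=\rho^{-1}(X(\sigma))$ the exceptional Cartier divisor.

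For the factorization axiom (2), on $\ti X$ the inverse image ideal is the invertible sheaf $I\cdot\oO_{\ti X}=\oO_{\ti X}(-D)=:L$, and I would use the complex structure on $\bC_X$ from Setup~\ref{holomorphicity assumption}(1) to regard $L$ as a complex line bundle with $L^\vee\cong\oO_{\ti X}(D)$ and canonical section $t\in\Gamma(\ti X,L^\vee)$ cutting out $D$. Writing $\sigma'$ locally through the generators $f_1,\dots,f_r$ of $I$ as in the discussion after Setup~\ref{holomorphicity assumption}, on the blow-up each $f_k$ becomes $t\cdot g_k$ with $g_1,\dots,g_r$ having no common zero; dividing by $t$ exhibits $\rho^*\sigma'$ as a surjection $\rho^*E|_X\to L$ followed by multiplication $t\colon L\to\bC_{\ti X}$, with $Z(t)=D$. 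This produces the $\sigma$-regularizing structure, the kernel bundle $E'=\ker(\rho^*E|_X\to L)$, and the lifting diagram \eqref{lifting diagram}.

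For homological lifting, the plan is to reduce the decomposition \eqref{decomposition of gamma} to base spaces via the Thom isomorphism. Since $E|_{X(\sigma)}$, $E'$ and their restrictions over $U(\sigma)$ are oriented real vector bundles ($E$ is oriented and $L$ is complex, hence so is $E'$), the closed--open Borel--Moore localization sequences of the pairs $(E(\sigma),E|_{X(\sigma)})$ and $(E',E'|_D)$ form a commutative ladder under $\rho'$, which is an isomorphism over $U(\sigma)$. Chasing this ladder, a class $\gamma\in H_i(E(\sigma))$ admits a decomposition $\gamma=j_*\alpha+\rho'_*\beta$ once its restriction to $H_i\big(\ker(\sigma'|_{U(\sigma)})\big)$ lifts along $H_i(E')\to H_i(E'|_{\rho^{-1}(U(\sigma))})$; by the Thom isomorphism this amounts to surjectivity of $H_*(\ti X)\to H_*(\rho^{-1}(U(\sigma)))\cong H_*(U(\sigma))$.

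The hard part will be exactly this last surjectivity. In the algebraic theory of \cite{KiemLiCosection} the analogous lifting is automatic, because the localization sequence for Chow groups is right exact; in Borel--Moore homology the sequence is only long exact, so one must show the connecting map $H_*(U(\sigma))\to H_{*-1}(D)$ vanishes, equivalently that the pushforward $H_*(D)\to H_*(\ti X)$ is injective. I expect to establish this using the holomorphicity built into Setup~\ref{holomorphicity assumption}: realizing $D$ as a Cartier divisor inside a blow-up of the ambient complex manifold $W$ of Setup~\ref{holomorphicity assumption}(2), where the exceptional divisor carries a projective-bundle-type structure, and then detecting the exceptional classes by intersection with $c_1(\oO_{\ti X}(D))$ via the section $t$. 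This is the step where the ``coming from geometry'' hypotheses are genuinely used, and I regard it as the crux of the proposition.
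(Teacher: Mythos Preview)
Your route differs from the paper's, and the divergence matters precisely at the step you flag as the crux.

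The paper does \emph{not} blow up $X^{an}$ intrinsically along $I$. Instead it uses the ambient embedding $X^{an}\hookrightarrow W$ from the very start: it extends the topological bundle $E|_X$ and the cosection $\sigma'$ to a smooth bundle $E_V$ and a smooth map $\sigma_V\colon E_V\to\bC_V$ on an open neighbourhood $V$ of $X$ in $W$, blows up the complex manifold $V$ along $X(\sigma)$ to obtain $\tau\colon\tilde V\to V$, and then sets $\tilde X=\tilde V\times_V X$ with $L=\oO_{\tilde V}(-D_V)|_{\tilde X}$. The payoff is that the lifting diagram \eqref{lifting diagram} now sits inside a diagram
\[
\xymatrix{E'\ar[r]\ar[d]_{\rho'} & \tau^*E_V\ar[d]\\ E(\sigma)\ar[r] & E_V}
\]
whose right column consists of vector bundles over the ambient space and its blow-up. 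This is exactly the configuration needed for \cite[Lemma~2.3]{KiemLiQuantum}, so the paper simply cites that lemma verbatim for homological lifting and is done. No separate injectivity or surjectivity argument on $X$ or $\tilde X$ is needed.

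Your plan, by contrast, blows up the (possibly singular) $X^{an}$ along $I$ and postpones the use of $W$ to the end. Your reduction of homological lifting to the surjectivity of $H_*(\tilde X)\to H_*(U(\sigma))$ via the Thom isomorphism for $E'$ is correct and clean. But the remaining step, showing that $H_*(D)\to H_*(\tilde X)$ is injective, is left as a sketch that does not obviously go through: detecting exceptional classes by capping with $c_1(\oO_{\tilde X}(D))$ works when the centre is smooth (so $D$ is a projective bundle), but here both $X^{an}$ and $X(\sigma)$ may be singular and $D$ has no such description. To make your idea rigorous you would end up embedding $\tilde X$ into the blow-up of a neighbourhood in $W$ anyway, at which point you have essentially reconstructed the paper's setup after the fact. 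It is cleaner, and avoids the gap, to work in the ambient manifold from the outset as the paper does.

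A secondary point: your verification of axiom~(2) passes through Setup~\ref{holomorphicity assumption}(5), which only controls $\sigma|_{X^{an}_{\Coo}}$, not $\sigma'$ on the possibly thicker $X_{\Coo}$. The paper's extension of $E$ and $\sigma'$ to the smooth $V$ sidesteps this nilpotent discrepancy as well.
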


\begin{proof}
By parts (1) and (2) of Setup~\ref{holomorphicity assumption}, we have a closed embedding $X^{an} \to W$. In particular, at the level of $\cC^\infty$-schemes we get an embedding
\begin{align} \label{loc 4.3}
    X_{\cC^\infty}^{an} \lr W_{\cC^\infty}.
\end{align}

By definition, pulling back $E$ from $Y$ via the embedding $X_{\cC^\infty} \to Y$ gives a vector bundle $E|_{X_{\cC^\infty}}$ on $X_{\cC^\infty}$. 

By part (3) of Setup~\ref{holomorphicity assumption}, we can further pull it back to a vector bundle $E|_{X_{\cC^\infty}^{an}}$ on $X_{\cC^\infty}^{an}$. Pulling back $\sigma'$ gives a cosection
\begin{align} \label{loc 4.4}
    E|_{X_{\cC^\infty}^{an}} \lr \bC_{X_{\cC^\infty}^{an}}.
\end{align}

At the topological level, both vector bundles $E|_{X_{\cC^\infty}}$ and $E|_{X_{\cC^\infty}^{an}}$ give the vector bundle $E|_X$ on the underlying topological space $X$.

The existence of the embedding \eqref{loc 4.3} and the bundle $E|_{X_{\cC^\infty}^{an}}$ implies (for example, by the argument in the proof of \cite[Theorem~4.34]{JoyceDMan}) that there exists an open neighbourhood $V$ of $X$ in $W$ such that $E|_X$ extends to a smooth vector bundle $E_V$ on $V$. Moreover, using parts (4) and (5) of Setup~\ref{holomorphicity assumption} and the cosection \eqref{loc 4.4}, up to shrinking $V$, we may extend the cosection $\sigma' \colon E|_X \to \bC_X$ to a smooth morphism of vector bundles
\begin{align*}
    \sigma_V \colon E_V \lr \bC_V.
\end{align*}

Let $\tau \colon \ti{V} \to V$ be the blowup of $V$ along $X(\sigma)$ with exceptional divisor $D_V$ and set $\ti{X} = \ti{V} \times_V X$ with induced proper morphism $\rho \colon \ti{X} \to X$. 

For two such choices $V_1, V_2$ and blowup maps $\tau_1 \colon \ti{V}_1 \to V_1$ and $\tau_2 \colon \ti{V}_22 \to V_2$, let their intersection inside $W$ be $V_{12} = V_1 \cap V_2$ and $\tau_{12} \colon \ti{V_{12}} \to V_{12}$ be the blowup of $V_{12}$ along $X(\sigma)$. For $i=1,2$, since $X(\sigma) \sub V_{12}$ is a closed embedding and $V_{12}$ is open in both $V_1$ and $V_2$, we  have diagrams with Cartesian squares
\begin{align*}
\xymatrix{
\ti{X} \ar[r] \ar[d]^-{\rho} & \ti{V}_{12} \ar[r] \ar[d]^-{\tau_{12}} & \ti{V}_i \ar[d]^-{\tau_i} \\
X \ar[r] & V_{12} \ar[r] & V_i
}
\end{align*}
and therefore $\rho$ is independent from the choice of neighbourhood $V$. 

We now have a complex line bundle $L = \oO_{\ti{V}}(-D_V)|_{\ti{X}}$ and a morphism 
$$\tau^* \sigma_V \colon \tau^* E_V \to \oO_{\ti{V}}(-D_V)$$
whose restriction to $\ti{X}$ gives a surjection
\begin{align*}
    \rho^* \sigma' \colon \rho^* E|_X \lr L.
\end{align*}
We thus see that $\rho \colon \ti{X} \to X$ is $\sigma$-regularizing. 

By construction, the diagram~\eqref{lifting diagram} gives rise to a commutative diagram
\begin{align} \label{loc 4.10'}
    \xymatrix{
    E' \ar[r] \ar[d]_{\rho'} & \tau^* E_V \ar[d] \\
    E(\sigma) \ar[r] & E_V.
    }
\end{align}
Then the proof of \cite[Lemma~2.3]{KiemLiQuantum} applies verbatim to this setting and implies that $\rho$ satisfies homological lifting.
\end{proof}

\begin{rmk} \label{Remark 5.6}
If we have two closed embeddings $X^{an} \to W_1$ and $X^{an} \to W_2$ of $X^{an}$ into complex manifolds $W_1$ and $W_2$, a standard argument using the diagonal embedding $X^{an} \to W_1 \times W_2$ and the projections $W_1 \times W_2 \to W_i$ for $i=1,2$ shows that the map $\rho$ is independent from the particular choice of closed embedding $X^{an} \to W$ and thus determined canonically by the rest of the data of Setup~\ref{holomorphicity assumption}. 

What is essential is that such an embedding exists. This is true automatically for example when $X^{an}$ is quasi-projective.
\end{rmk}

Given a $\sigma$-regularizing map $\rho \colon \ti{X} \to X$ satisfying homological lifting, by \cite{KiemLiQuantum} we have a localized Gysin map
\begin{align} \label{localized Gysin map}
    0_{E|_{X}, \sigma}^! \colon H_*(E(\sigma)) \lr H_{*-r}(X(\sigma))
\end{align}
given by the formula
\begin{align} \label{formula of localized Gysin map}
    0_{E|_X, \sigma}^! \gamma = 0_{E|_{X(\sigma)}}^! \alpha - \rho(\sigma)_* \left( e(L^\vee, t) \cap 0_{E'}^! \beta \right)
\end{align}
where $\gamma \in H_i(E(\sigma))$ and we have written $\gamma = j_* \alpha + \rho'_* \beta$ for some $\alpha \in H_i(E|_{X(\sigma)})$ and $\beta \in H_i(E')$.

By the proof of \cite[Theorem~3.2]{KiemLiQuantum}, the Gysin map $0_{E|_X, \sigma}^!$ is independent from the particular choice of $\alpha$ and $\beta$.

If $i \colon X(\sigma) \to X$ denotes the inclusion, it is shown in \cite[Corollary~2.9]{KiemLiCosection} that it satisfies
\begin{align*}
    0_{E|_X}^! \circ j_* = i_* \circ 0_{E|_{X}, \sigma}^! \colon H_*(E|_X) \to H_{*-r}(X).
\end{align*}

Now, by Proposition~\ref{support of cone}, $C(s)$ is contained in $E(\sigma)$ and therefore by the definition \eqref{definition of homological normal cone} it follows that we obtain a class
\begin{align*}
    [C(s)] \in H_N(E(\sigma)).
\end{align*}

We are now in position to explicitly define the cosection localized virtual fundamental class of $\xX$.

\begin{thm-defi} \label{localization by complex cosection}
Let $\xX$ be a compact, oriented d-manifold of virtual dimension $n$ and $\sigma \colon \Ob_{\xX} \to \bC_X$ a complex cosection, satisfying the conditions of Setup~\ref{holomorphicity assumption}. Then the cosection localized virtual fundamental class of $\xX$ is defined by the formula
\begin{align*}
    [\xX]_{\loc,\sigma}\virt := 0_{E|_{X}, \sigma}^! [C(s)] \in H_{n}(X(\sigma)).
\end{align*}
It satisfies
\begin{align*}
    i_* [\xX]_{\loc,\sigma}\virt = [\xX]\virt \in H_n(X).
\end{align*}
\end{thm-defi}

Finally, we have not addressed the potential dependence of $[\xX]_{\loc,\sigma}\virt$ on the presentation \eqref{principal model} of $\xX$ as a principal d-manifold, i.e. on the data $Y$, $E$, $s$.

\begin{prop}
$[\xX]_{\loc,\sigma}\virt$ is independent from the choice of presentation $\xX \simeq \sS_{Y,E,s}$ and thus well defined.
\end{prop}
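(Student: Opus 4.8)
The plan is to mimic the proof of Proposition~\ref{prop 5.2} (independence for the real cosection case), reducing any two presentations to a standard form related by a submersion and adapting the comparison to the localized Gysin-map formula~\eqref{formula of localized Gysin map}. By Theorem-Definition~\ref{presentation as a d-manifold} and the cited results of \cite{JoyceDMan}, given two presentations $\xX \simeq \sS_{Y_1,E_1,s_1}$ and $\xX \simeq \sS_{Y_2,E_2,s_2}$, after shrinking $Y_1$ around $X$ we may assume there is a submersion $f \colon Y_1 \to Y_2$ and a surjection $\hat{f} \colon E_1 \to f^* E_2$ with $\hat{f} \circ s_1 = f^* s_2$, exactly as in the proof of Proposition~\ref{prop 5.2}. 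Choosing a splitting $E_1 = f^* E_2 \oplus E_3$ with $E_3$ trivial of rank $p = N_1 - N_2$, we get $C(s_1) = C(s_2) \oplus E_3|_X$ inside $E_1|_X$, so the cones are compatibly related by a vector-bundle projection.

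\textbf{Reduction to comparing the Gysin outputs.} First I would note that the cosection data is compatible: pulling back $\sigma'_2 = \sigma \circ p_2$ along $\hat{f}$ gives $\sigma'_1 = \sigma \circ p_1$, so both presentations have the \emph{same} degeneracy locus $X(\sigma)$ and the same $U(\sigma)$, and the kernel bundles satisfy $E_1(\sigma) = E_2(\sigma) \oplus E_3|_X$ over $U(\sigma)$ (and $E_1|_{X(\sigma)} = E_2|_{X(\sigma)} \oplus E_3|_{X(\sigma)}$ over the degeneracy locus). By Remark~\ref{Remark 5.6}, the $\sigma$-regularizing map $\rho \colon \ti{X} \to X$ built in Proposition~\ref{prop 4.10} is canonical and depends only on the analytic data of Setup~\ref{holomorphicity assumption}, not on the presentation; hence the same $\rho$, line bundle $L$, and section $t$ serve both presentations, and the kernel bundle $E'$ of~\eqref{loc 4.2'} for presentation $1$ is $E'_1 = E'_2 \oplus \rho^* E_3$. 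The goal then reduces to the identity
\begin{align*}
    0_{E_1|_X, \sigma}^! [C(s_1)] = 0_{E_2|_X, \sigma}^! [C(s_2)] \in H_n(X(\sigma)),
\end{align*}
where both sides land in the same homology group since the virtual dimension $n = N_1 - r_1 = N_2 - r_2$ is presentation-independent.

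\textbf{Carrying out the comparison.} The essential point is a functoriality of the localized Gysin map under adding a trivial bundle summand. Writing the decomposition $[C(s_1)] = j_{1*}\alpha_1 + \rho'_{1*}\beta_1$ compatibly with $[C(s_2)] = j_{2*}\alpha_2 + \rho'_{2*}\beta_2$ via the projection $E_1(\sigma) \to E_2(\sigma)$, I would apply the defining formula~\eqref{formula of localized Gysin map} to each. The standard-form calculation gives $\alpha_1 = 0_{E_3|_{X(\sigma)}}$-section applied to $\alpha_2$ and $\beta_1$ the analogous lift of $\beta_2$, so that the ordinary Gysin maps satisfy $0_{E_1|_{X(\sigma)}}^! \alpha_1 = 0_{E_2|_{X(\sigma)}}^! \alpha_2$ and $0_{E'_1}^! \beta_1 = 0_{E'_2}^! \beta_2$, using that the Gysin map for $E_1 = f^*E_2 \oplus E_3$ factors as the composition of the Gysin maps for the two summands (the Euler class of a trivial summand contributing trivially after the section-zero-locus identification, exactly as the Euler-class argument in the proof of Theorem~\ref{vanishing for surjective real cosection}). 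Since $L$, $t$, $\rho(\sigma)$ are shared, the $e(L^\vee, t) \cap (-)$ terms also agree, and formula~\eqref{formula of localized Gysin map} yields equal outputs. \textbf{The main obstacle} will be making precise that the homological-lifting decompositions $\gamma = j_*\alpha + \rho'_*\beta$ can be chosen \emph{compatibly} across the two presentations, rather than merely asserting equality of the final Gysin values; because the decomposition is non-unique, I would lean on the independence-of-$\alpha,\beta$ statement following~\eqref{formula of localized Gysin map} (from the proof of \cite[Theorem~3.2]{KiemLiQuantum}) to decouple the two choices, so that it suffices to exhibit \emph{one} matching pair, which the standard-form splitting $E_1 = f^*E_2 \oplus E_3$ provides explicitly.
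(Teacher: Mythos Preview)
Your proposal is correct and follows essentially the same approach as the paper: reduce to the standard submersion form, observe that the $\sigma$-regularizing map $\rho$ is presentation-independent (Remark~\ref{Remark 5.6}), relate the cones and kernel bundles via the splitting $E_1 = f^*E_2 \oplus E_3$, and then use functoriality of the (localized) Gysin maps together with the independence of formula~\eqref{formula of localized Gysin map} from the choice of decomposition. The paper phrases the comparison via pullback maps $g(\sigma)^*$ along the bundle projections in a commutative diagram with Cartesian squares rather than your direct-sum language, but this is only a cosmetic difference.
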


\begin{proof}
Firstly, the $\sigma$-regularizing map $\rho \colon \ti{X} \to X$ by construction can only a priori depend on the choice of closed embedding $X \hookrightarrow W$ and of an open neighbourhood $V$ of $X$ inside $W$. By the proof of Proposition~\ref{prop 4.10} and Remark~\ref{Remark 5.6}, it is independent of these choices and thus independent from the particular choice of presentation.

Suppose now that $\xX \simeq \sS_{Y_1,E_1,s_1} \simeq \sS_{Y_2,E_2,s_2}$. As in the proof of Proposition~\ref{prop 5.2}, up to possibly shrinking $Y_1$ around $X$, we can write this equivalence in the following standard form: we have a submersion $f \colon Y_1 \to Y_2$ and a morphism of vector bundles $\hat{f} \colon E_1 \to f^* E_2$ such that $\hat{f} \circ s_1 = f^* s_2$. The morphism of tangent complexes
\begin{align*}
    \xymatrix{
    T_{Y_1} |_X \ar[r]^-{\dd s_1} \ar[d]_-{\dd f} & E_1|_X \ar[d]^-{\hat{f}} \\
    f^* T_{Y_2} |_X \ar[r]^-{\dd (f^* s_2)} & f^\ast E_2 |_X
    }
\end{align*}
is a quasi-isomorphism on fibers over $x \in X$ and thus we must have that $E_1|_X \xrightarrow{\hat{f}} f^* E_2|_X$ is surjective, so after further possible shrinking of $Y_1$, we may assume that $\hat{f}$ is surjective. 

Using the notation of diagram \eqref{lifting diagram}, we have an induced diagram with Cartesian squares
\begin{align*}
    \xymatrix{
    E_1' \ar[d]_-{\rho_1'} \ar[r]^-{\hat{f}'} & (f^\ast E_2)' \ar[d] \ar[r]^-{f'} & E_2' \ar[d]^-{\rho_2'}\\
    E_1(\sigma) \ar[r]^-{\hat{f}(\sigma)} & (f^* E_2)(\sigma) \ar[r]^-{f(\sigma)} & E_2(\sigma) \\
    E_1|_{X(\sigma)} \ar[u]^-{i_1} \ar[r]^-{\hat{f}|_{X(\sigma)}} & (f^* E_2)|_{X(\sigma)} \ar[u] \ar[r]^-{f|_{X(\sigma)}} & E_2|_{X(\sigma)} \ar[u]_-{i_2}
    }
\end{align*}
where the horizontal arrows are smooth and the vertical arrows proper.

Write $\hat{g}' = f' \circ \hat{f}'$, $g(\sigma) = f(\sigma) \circ \hat{f}(\sigma)$ and $g|_{X(\sigma)} = f|_{X(\sigma)} \circ \hat{f}|_{X(\sigma)}$ for the (submersive) compositions of the horizontal arrows.

Using base change for Cartesian squares with vertical arrows that are submersions and proper horizontal arrows (cf. for example \cite[Equations~(2.12), (2.23)]{KiemLiQuantum}), the local description of the equivalence $\sS_{Y_1, E_1, s_1} \simeq \sS_{Y_2, E_2, s_2}$ given in the proof of Proposition~\ref{prop 5.2} implies that in the notation of \eqref{loc 3.2} we have
$$(s_{1,\ell})_* [Y_1 \times (\bR^\ell \backslash \lbrace 0 \rbrace] = \hat{g}^\ast (s_{2, \ell})_* [Y_2 \times (\bR^\ell \backslash \lbrace 0 \rbrace]$$ 
where $\hat{g}$ is the composition $E_1 \xrightarrow{\hat{f}} f^* E_2 \to E_2$.

It thus follows from the definition of the cones $[C(s_1)]$ and $[C(s_2)]$ and Proposition~\ref{support of cone} that
$$[C(s_1)] = \hat{f}(\sigma)^\ast [C(f^\ast s_2)] = \hat{f}(\sigma)^\ast f(\sigma)^\ast [C(s_2)] = g(\sigma)^\ast [C(s_2)] $$ 
and therefore if 
$$[C(s_2)] = (i_2)_\ast \alpha_2 + (\rho_2')_\ast \beta_2$$ in the notation of \eqref{decomposition of gamma}, we will have
$$[C(s_1)] = (i_1)_\ast \alpha_1 + (\rho_1')_\ast  \beta_1$$
where
$$\alpha_1 = (g|_{X(\sigma)})^\ast \alpha_2, \ \beta_1 = (\hat{g}')^* \beta_2. $$

By the usual functoriality properties of Gysin maps (cf. for example \cite[Section~2.4]{KiemLiQuantum}), we obtain
\begin{align*}
    0_{E_1|_{X(\sigma)}}^! \alpha_1 =  0_{E_2|_{X(\sigma)}}^! \alpha_2, \ 0_{E_1'}^! \beta_1 = 0_{E_2'}^! \beta_2
\end{align*}
so the formula of the localized Gysin map \eqref{formula of localized Gysin map} immediately implies that
$$ 0_{E_1|_X, \sigma}^! [C(s_1)] = 0_{E_2|_X, \sigma}^! [C(s_2)] $$
as we want.
\end{proof}

\begin{rmk}
In general, given a cosection $\sigma$ on $\xX$, it is not clear if it is possible to endow the locus $X(\sigma)$ with a natural choice of structure of a d-manifold.

This is intuitive from the point of view of complex analytic spaces with perfect obstruction theory, as a cosection does not necessarily induce a perfect obstruction theory on its vanishing locus.
\end{rmk}

\section{Application: Vanishing of Stable Pair Invariants of Hyperk\"{a}hler Fourfolds} \label{Application Section}

Let $W$ be a smooth, projective Calabi-Yau fourfold. In \cite{CaoMaulikToda}, the authors study the stable pair invariants of $W$ and claim that they vanish when $W$ is hyperk\"{a}hler, anticipating the existence of a cosection localization theory for d-manifolds. In this section, we prove their claim using the theory developed in the previous parts of the paper.\medskip

A stable pair on $W$ is the data of
\begin{equation*}
    (F, s), \ F \in \Coh W,\ s \colon \oO_W \lr F
\end{equation*}
where $F$ is a pure one-dimensional sheaf on $W$ and $s$ is surjective in dimension one. 

Fix $\beta \in H_2(W, \bZ)$ and $n \in \bZ$. Then the moduli space $P_n(W,\beta)$ is a projective scheme which parameterizes stable pairs viewed as two-term complexes 
\begin{align*}
    I = [ \oO_W \xrightarrow{s} F] \in D^b(\Coh W)
\end{align*}
satisfying $[F]=\beta$ and $\chi(F) = n$.

By \cite[Lemma~1.3]{CaoMaulikToda}, $X = P_n(W, \beta)$ is the truncation of a $(-2)$-shifted symplectic derived scheme $\unxX$, which admits a choice of orientation. Writing $\bI = [\oO_{W \times X} \to \bF]$ for the universal stable pair and $\pi_X \colon X \times W \to X, \pi_W \colon X \times W \to W$ for the projection maps, we have
\begin{align*}
    \bT_{\unxX}|_X = \RHOM_{\pi_X}(\bI,\bI)_0[1]
\end{align*}
so that at a point $x=[I] \in X$ we have $\Ob(\unxX, x) = \Ext^2(I,I)_0$ and the quadratic form $q_x = q_I$ induced by the symplectic form coincides with the Serre duality pairing.\medskip

Suppose now that $W$ is hyperk\"{a}hler with holomorphic symplectic form $\eta$. Then, using the Atiyah class of $\bI$ and the symplectic form, it is shown in \cite{CaoMaulikToda} that $\unxX$ admits a strong cosection, whose associated weak cosection is non-degenerate.

We briefly recall the construction for the convenience of the reader. 

Let 
$$\At(\bI) \in \Ext^1(\bI, \bI \otimes \pi_W^* \Omega_W )$$
be the relative Atiyah class of $\bI$. We then get morphisms
\begin{align*}
    \wedge \frac{\At(\bI)^2}{2} \colon & \bT_{\unxX}|_X[1] = \RHOM_{\pi_X}(\bI,\bI)_0[2] \lr \RHOM_{\pi_X} \left( \bI,\bI \otimes \pi_W \Omega_W^2 [2] \right)[2] \\
    \lrcorner \ \eta \colon & \RHOM_{\pi_X}\left( \bI,\bI \otimes \pi_W \Omega_W^2 [2] \right)[2] \lr \RHOM_{\pi_X}(\bI,\bI[2])[2] \\
    \tr \colon & \RHOM_{\pi_X}(\bI,\bI[2])[2] \lr \RHOM_{\pi_X}(\oO_{X\times W},\oO_{X\times W}[2])[2] = \oO_X \oplus \oO_X[-4]
\end{align*}
Projecting onto $\oO_X$, their composition gives a cosection
\begin{align*}
    \sigma \colon \bT_{\unxX}|_X [1] \lr \oO_X
\end{align*}
which at the point $x = [I] \in X$ corresponds to a morphism
\begin{align} \label{sigma I}
    \sigma_x = \sigma_I \colon \Ob(\unxX,x) = \Ext^2(I,I)_0 \lr H^4(W, \oO_W).
\end{align}

We have the following proposition.

\begin{prop} \emph{\cite[Proposition~2.9]{CaoMaulikToda2}} \emph{\cite[Proposition~2.18]{CaoMaulikToda}} 
Let $W$ be a smooth, projective hyperk\"{a}hler fourfold and $X$ a connected scheme. Let $\bI$ be any perfect complex on $X \times W$ and write $\bI |_x = I$ for $x \in X$. Then if $\ch_3(I) \neq 0$ or $\ch_4(I) \neq 0$, $\sigma_I$ is surjective and:
\begin{enumerate}
    \item If $\ch_4(I) \neq 0$, $\sigma_I$ admits a $q_I$-orthogonal splitting and $q_I$ is non-degenerate on $H^4(W, \oO_W)$.
    \item If $\ch_4(I)=0$ and $\ch_3(I) \neq 0$, let $\kappa_W \in H^1(W, T_W)$ be the Kodaira-Spencer class dual to $\ch_3(I)$. Similarly to the above, we obtain a second strong cosection
    \begin{align*}
        \tau \colon \pi_W^* \kappa_W \circ \left( \wedge \At(\bI) \right) \colon \bT_{\unxX}|_X[1] \lr \oO_X.
    \end{align*}
    The map $\sigma_I \oplus \tau_I$ is surjective  and admits a $q_I$-orthogonal splitting such that $q_I$ is non-degenerate on $H^4(W, \oO_W) \oplus H^4(W, \oO_W)$ with matrix $\begin{pmatrix} 0 & 2 \\ 2 & 0 \end{pmatrix}$. In particular, 
    \begin{equation}  \label{sigma I and tau I}
    \sigma_I + \tau_I \colon \Ext^2(I,I)_0 \lr H^4(W, \oO_W)
    \end{equation}
    admits a $q_I$-orthogonal splitting and $q_I$ is non-degenerate on $H^4(W, \oO_W)$.
\end{enumerate}
\end{prop}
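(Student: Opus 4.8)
The plan is to reduce every assertion to linear algebra for a single nondegenerate symmetric form. Since $K_W \cong \oO_W$, Serre duality makes $q_I$ a nondegenerate symmetric pairing on $V := \Ext^2(I,I)_0$, and after fixing an identification $H^4(W,\oO_W) \cong \bC$ (note $h^{0,4}(W)=1$ for a holomorphic symplectic fourfold) each cosection becomes a linear functional on $V$. I would write $v_\sigma, w_\tau \in V$ for the vectors representing $\sigma_I$ and $\tau_I$ under the isomorphism $V \cong V^\vee$ induced by $q_I$, so that $\sigma_I(\kappa) = q_I(\kappa, v_\sigma)$ and $\tau_I(\kappa) = q_I(\kappa, w_\tau)$. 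The claims then read: (i) $v_\sigma \neq 0$ (surjectivity); (ii) $q_I(v_\sigma, v_\sigma) \neq 0$, which is exactly the $q_I$-orthogonal splitting and nondegeneracy of Definition~\ref{cosection for derived schemes} since then $V = \bC v_\sigma \perp v_\sigma^\perp$; and in the degenerate range that the Gram matrix of $(v_\sigma, w_\tau)$ equals $\left(\begin{smallmatrix} 0 & 2 \\ 2 & 0 \end{smallmatrix}\right)$. Thus everything reduces to computing the three numbers $q_I(v_\sigma, v_\sigma)$, $q_I(v_\sigma, w_\tau)$ and $q_I(w_\tau, w_\tau)$.

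Second, I would identify the vectors $v_\sigma$ and $w_\tau$ explicitly through the Atiyah class. By construction $\sigma_I$ is the composite $\tr \circ (\lrcorner\,\eta) \circ (\wedge\, \tfrac{1}{2}\At(I)^2)$, and unwinding the Serre-duality adjunction shows that its representing vector is $v_\sigma = \tfrac{1}{2}\, \At(I)^2 \lrcorner\, \eta \in \Ext^2(I,I)$, which one checks is traceless and hence lies in $V$. Likewise $w_\tau = \At(I) \lrcorner\, \kappa_W \in \Ext^2(I,I)$ represents the second cosection $\tau_I$ built from the Kodaira--Spencer class $\kappa_W \in H^1(W, T_W)$. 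These identifications follow directly from the definitions of $\sigma_I$ and $\tau_I$ together with the compatibility of the trace pairing with Serre duality.

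Third, I would evaluate the three Gram entries as cohomological integrals over $W$, using the dictionary $\ch_k(I) = \tfrac{(-1)^k}{k!}\tr(\At(I)^k)$ and a Grothendieck--Riemann--Roch push--pull along $\pi_X$. Tracing turns $q_I(v_\sigma, v_\sigma)$ into the integral of $\eta^{\otimes 2}$ contracted against $\tr(\At(I)^4)$; since $\eta^{\otimes 2}$ trivializes $\wedge^4 T_W \cong K_W^\vee \cong \oO_W$, this is a nonzero multiple of $\int_W \ch_4(I)$. The mixed entry $q_I(v_\sigma, w_\tau)$ pairs $\eta$ and $\kappa_W$ against $\tr(\At(I)^3)$, hence is a nonzero multiple of the pairing $\langle \kappa_W, \ch_3(I)\rangle$, which is nonzero precisely because $\kappa_W$ was chosen dual to $\ch_3(I)$, the normalization being arranged so that this value is $2$. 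The remaining entry $q_I(w_\tau, w_\tau)$, involving $\kappa_W^{\otimes 2}$ against $\tr(\At(I)^2) = 2\,\ch_2(I)$, is computed to vanish, producing the two zeros on the diagonal of $\left(\begin{smallmatrix} 0 & 2 \\ 2 & 0 \end{smallmatrix}\right)$.

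Finally I would assemble the cases. If $\ch_4(I) \neq 0$ then $q_I(v_\sigma, v_\sigma) \neq 0$, giving both surjectivity and the $q_I$-orthogonal splitting of part~(1) at once. If $\ch_4(I) = 0$ but $\ch_3(I) \neq 0$, the Gram matrix $\left(\begin{smallmatrix} 0 & 2 \\ 2 & 0 \end{smallmatrix}\right)$ is nondegenerate, so $\sigma_I \oplus \tau_I$ is surjective with the stated splitting; moreover $\sigma_I + \tau_I$ is represented by $v_\sigma + w_\tau$ with $q_I(v_\sigma + w_\tau, v_\sigma + w_\tau) = 0 + 2\cdot 2 + 0 = 4 \neq 0$, yielding part~(2). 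The main obstacle is the third step: carrying out the Riemann--Roch computation that matches the trace pairings of Atiyah-class powers with the Chern-character integrals and pins down the constants exactly (the off-diagonal value $2$ and the vanishing of $q_I(w_\tau, w_\tau)$), since this is where the hyperk\"{a}hler geometry --- triviality of $K_W$, the symplectic contraction by $\eta$, and the one-dimensionality of $H^{0,4}(W)$ --- must be used in an essential and quantitatively precise way.
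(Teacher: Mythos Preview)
The paper does not contain a proof of this proposition: it is quoted verbatim from \cite[Proposition~2.18]{CaoMaulikToda} and \cite[Proposition~2.9]{CaoMaulikToda2} and used as a black box to deduce Theorem~\ref{vanishing of PT4}. There is therefore no proof in the present paper to compare your proposal against.

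That said, your outline is a faithful sketch of the argument in the cited references. The reduction to computing the Gram matrix of the representing vectors $v_\sigma, w_\tau \in \Ext^2(I,I)_0$ under the Serre-duality pairing is exactly how Cao--Maulik--Toda proceed, and the identification of the entries with pairings of $\ch_k(I)$ against powers of $\eta$ and $\kappa_W$ is the substance of their computation. Two points deserve care if you flesh this out: first, the vanishing of the bottom-right entry $q_I(w_\tau,w_\tau)$ is not merely ``computed to vanish'' but uses that $\kappa_W \cup \kappa_W$ lands in $H^2(W,\wedge^2 T_W)$ and, after contraction with $\ch_2(I)$, one must invoke the specific Hodge-theoretic structure of a hyperk\"ahler fourfold (in particular that $H^{1,1}(W)$ is $q$-orthogonal to the class in $H^{2,0}\oplus H^{0,2}$ arising from $\kappa_W$); second, the normalization giving the off-diagonal entry the exact value $2$ rather than some other nonzero constant depends on the precise choice of $\kappa_W$ dual to $\ch_3(I)$, and you should be explicit about which duality pairing is being used. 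Your own caveat about the third step correctly identifies where the genuine work lies.
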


Using the proposition, it is immediate by Definition~\ref{cosection for derived schemes} that if $n \neq 0$, the cosection $\sigma$ for $P_n(W, \beta)$ is non-degenerate, and if $\beta \neq 0$, the cosection $\sigma + \tau$ given pointwise by \eqref{sigma I and tau I} is non-degenerate. 

Therefore by Theorem~\ref{vanishing for non-degenerate cosection} we immediately get the following vanishing result for stable pair invariants on hyperk\"{a}hler fourfolds, which is precisely \cite[Claim~2.19]{CaoMaulikToda}.

\begin{thm} \label{vanishing of PT4}
Let $W$ be a smooth, projective hyperk\"{a}hler fourfold and $P_n(W, \beta)$ the moduli space of stable pairs with $n \neq 0$ or $\beta \neq 0$. Then $[P_n(W, \beta)]\virt = 0$.
\end{thm}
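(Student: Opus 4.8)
The plan is to produce, in each of the two cases $n\neq 0$ and $\beta\neq 0$, a non-degenerate cosection on the $(-2)$-shifted symplectic derived scheme $\unxX$ truncating to $P_n(W,\beta)$, and then to invoke Theorem~\ref{vanishing for non-degenerate cosection} directly. First I would record the input: by \cite[Lemma~1.3]{CaoMaulikToda}, $X=P_n(W,\beta)$ is the classical truncation of a proper (since $P_n(W,\beta)$ is projective), oriented $(-2)$-shifted symplectic derived scheme $\unxX$ with $\bT_{\unxX}|_X=\RHOM_{\pi_X}(\bI,\bI)_0[1]$, so that $\Ob(\unxX,[I])=\Ext^2(I,I)_0$ and $q_I$ is the Serre duality pairing. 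Thus all hypotheses of Theorem~\ref{vanishing for non-degenerate cosection} on $\unxX$ are already in place, and everything reduces to exhibiting a cosection that is non-degenerate in the sense of Definition~\ref{cosection for derived schemes}.

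The one genuinely computational step is to translate the numerical conditions into conditions on $\ch(I)$ for $I=[\oO_W\xrightarrow{s}F]$. In $K$-theory $[I]=[\oO_W]-[F]$, and since $F$ is pure of dimension one its Chern character is concentrated in degrees $\geq 3$: one has $\ch_0(F)=\ch_1(F)=\ch_2(F)=0$, while $\ch_3(F)$ is the cycle class associated to $\beta$. Using that $W$ is Calabi-Yau, so $\td_1(W)=0$, Hirzebruch-Riemann-Roch reduces to $\chi(F)=\int_W\ch_4(F)=n$. Hence $\ch_4(I)=-n$ and $\ch_3(I)=-\ch_3(F)$ detects $\beta$, giving the equivalences $\ch_4(I)\neq 0\iff n\neq 0$ and $\ch_3(I)\neq 0\iff\beta\neq 0$ at every point $[I]\in X$.

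Next I would run the case analysis via the cited Proposition of Cao-Maulik-Toda. If $n\neq 0$, then $\ch_4(I)\neq 0$ everywhere, so part (1) shows $q_I$ is non-degenerate on the image of $\sigma_I^\vee$; hence $\sigma$ is a non-degenerate cosection. If instead $n=0$ but $\beta\neq 0$, then $\ch_4(I)=0$ and $\ch_3(I)\neq 0$ everywhere, so the auxiliary cosection $\tau$ built from the Kodaira-Spencer class is defined and part (2) gives non-degeneracy of $\sigma+\tau$; when both $n\neq0$ and $\beta\neq0$ either choice works, and I simply use $\sigma$. In all cases Definition~\ref{cosection for derived schemes} is satisfied pointwise, hence globally. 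Finally I apply Theorem~\ref{vanishing for non-degenerate cosection} to $(\unxX,\omega_{\unxX})$ with this cosection to obtain $[\xX_{dm}]\virt=0$, and conclude by Subsection~\ref{subsection 2.3} and Proposition~\ref{virtual class is canonical} that $[P_n(W,\beta)]\virt=[\xX_{dm}]\virt=0$.

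I do not expect a serious obstacle here: the analytic heart of the argument, namely that the Serre-duality form is non-degenerate on the image of the Atiyah-class cosection, is entirely packaged in the quoted Proposition, and Theorem~\ref{vanishing for non-degenerate cosection} supplies the vanishing mechanism. The only points requiring care are the Chern-character bookkeeping above and the observation that pointwise non-degeneracy assembles into a single non-degenerate cosection on $\unxX$. The mildly subtle case is $\beta\neq 0$ with $n=0$, where one must ensure $\ch_3(I)\neq 0$ (so that $\tau$ exists) uniformly over $X$; this holds because $\ch_3(I)$ is deformation-invariant in the family and fixed by the class $\beta$.
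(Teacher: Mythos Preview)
Your proposal is correct and follows essentially the same route as the paper: translate the numerical conditions $n\neq 0$, $\beta\neq 0$ into $\ch_4(I)\neq 0$, $\ch_3(I)\neq 0$, invoke the Cao--Maulik--Toda Proposition to obtain a non-degenerate cosection ($\sigma$ in the first case, $\sigma+\tau$ in the second), and apply Theorem~\ref{vanishing for non-degenerate cosection}. Your explicit Riemann--Roch bookkeeping and the careful case split (using $\sigma+\tau$ only when $n=0$ and $\beta\neq 0$, so that part (2) of the Proposition genuinely applies) are in fact a bit more precise than the paper's terse phrasing, but the argument is otherwise identical.
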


\bibliography{Master}

\newcommand{\etalchar}[1]{$^{#1}$}
\begin{thebibliography}{BBD{\etalchar{+}}15}

\bibitem[BBD{\etalchar{+}}15]{JoyceCat}
C.~Brav, V.~Bussi, D.~Dupont, D.~Joyce, and B.~Szendr\H{o}i.
\newblock Symmetries and stabilization for sheaves of vanishing cycles.
\newblock {\em J. Singul.}, 11:85--151, 2015.
\newblock With an appendix by J\"{o}rg Sch\"{u}rmann.

\bibitem[BBJ19]{JoyceSch}
Christopher Brav, Vittoria Bussi, and Dominic Joyce.
\newblock A {D}arboux theorem for derived schemes with shifted symplectic
  structure.
\newblock {\em J. Amer. Math. Soc.}, 32(2):399--443, 2019.

\bibitem[Beh09]{BehFun}
Kai Behrend.
\newblock Donaldson-{T}homas type invariants via microlocal geometry.
\newblock {\em Ann. of Math. (2)}, 170(3):1307--1338, 2009.

\bibitem[BF97]{BehFan}
Kai Behrend and Barbara Fantechi.
\newblock The intrinsic normal cone.
\newblock {\em Invent. Math.}, 128(1):45--88, 1997.

\bibitem[BJ17]{BorisovJoyce}
Dennis Borisov and Dominic Joyce.
\newblock Virtual fundamental classes for moduli spaces of sheaves on
  {C}alabi-{Y}au four-folds.
\newblock {\em Geom. Topol.}, 21(6):3231--3311, 2017.

\bibitem[Bre97]{Bredon}
Glen~E. Bredon.
\newblock {\em Sheaf theory}, volume 170 of {\em Graduate Texts in
  Mathematics}.
\newblock Springer-Verlag, New York, second edition, 1997.

\bibitem[Cao20]{CaoGV0Fano}
Yalong Cao.
\newblock Genus zero {G}opakumar-{V}afa type invariants for {C}alabi-{Y}au
  4-folds {II}: {F}ano 3-folds.
\newblock {\em Commun. Contemp. Math.}, 22(7):1950060, 25, 2020.

\bibitem[CC14]{CaoLeungDT4}
Yalong {Cao} and Naichung {Conan Leung}.
\newblock {Donaldson-Thomas theory for Calabi-Yau 4-folds}.
\newblock {\em arXiv e-prints}, page arXiv:1407.7659, July 2014.

\bibitem[CGJ20]{CaoGrossJoyceOrient}
Yalong Cao, Jacob Gross, and Dominic Joyce.
\newblock Orientability of moduli spaces of {${\rm Spin}(7)$}-instantons and
  coherent sheaves on {C}alabi-{Y}au 4-folds.
\newblock {\em Adv. Math.}, 368:107134, 60, 2020.

\bibitem[CK18]{CaoKoolZero}
Yalong Cao and Martijn Kool.
\newblock Zero-dimensional {D}onaldson-{T}homas invariants of {C}alabi-{Y}au
  4-folds.
\newblock {\em Adv. Math.}, 338:601--648, 2018.

\bibitem[CK20]{CaoKoolDT4PT4}
Yalong Cao and Martijn Kool.
\newblock Curve counting and {DT}/{PT} correspondence for {C}alabi-{Y}au
  4-folds.
\newblock {\em Adv. Math.}, 375:107371, 49, 2020.

\bibitem[CKM19]{CaoKoolMonavariKDT4PT4}
Yalong {Cao}, Martijn {Kool}, and Sergej {Monavari}.
\newblock {$K$-theoretic DT/PT correspondence for toric Calabi-Yau 4-folds}.
\newblock {\em arXiv e-prints}, page arXiv:1906.07856, June 2019.

\bibitem[CKM20]{CaoKoolMonavariPT4LocalCY}
Yalong {Cao}, Martijn {Kool}, and Sergej {Monavari}.
\newblock {Stable pair invariants of local Calabi-Yau 4-folds}.
\newblock {\em arXiv e-prints}, page arXiv:2004.09355, April 2020.

\bibitem[CMT18]{CaoMaulikToda2}
Yalong Cao, Davesh Maulik, and Yukinobu Toda.
\newblock Genus zero {G}opakumar-{V}afa type invariants for {C}alabi-{Y}au
  4-folds.
\newblock {\em Adv. Math.}, 338:41--92, 2018.

\bibitem[CMT22]{CaoMaulikToda}
Yalong Cao, Davesh Maulik, and Yukinobu Toda.
\newblock Stable pairs and {G}opakumar--{V}afa type invariants for
  {C}alabi--{Y}au 4-folds.
\newblock {\em J. Eur. Math. Soc. (JEMS)}, 24(2):527--581, 2022.

\bibitem[CT19]{CaoTodaDerivedDT4}
Yalong {Cao} and Yukinobu {Toda}.
\newblock {Curve counting via stable objects in derived categories of
  Calabi-Yau 4-folds}.
\newblock {\em arXiv e-prints}, page arXiv:1909.04897, September 2019.

\bibitem[CT21]{CaoTodaGVDescendent}
Yalong Cao and Yukinobu Toda.
\newblock Gopakumar-{V}afa type invariants on {C}alabi-{Y}au 4-folds via
  descendent insertions.
\newblock {\em Comm. Math. Phys.}, 383(1):281--310, 2021.

\bibitem[Gin98]{Ginzburg}
Victor Ginzburg.
\newblock Geometric methods in the representation theory of {H}ecke algebras
  and quantum groups.
\newblock In {\em Representation theories and algebraic geometry ({M}ontreal,
  {PQ}, 1997)}, volume 514 of {\em NATO Adv. Sci. Inst. Ser. C Math. Phys.
  Sci.}, pages 127--183. Kluwer Acad. Publ., Dordrecht, 1998.
\newblock Notes by Vladimir Baranovsky [V. Yu. Baranovski\u{\i}].

\bibitem[IP19]{IonelParker}
Eleny-Nicoleta Ionel and Thomas~H. Parker.
\newblock Relating {VFC}s on thin compactifications.
\newblock {\em Math. Ann.}, 375(1-2):845--893, 2019.

\bibitem[Ive86]{Iversen}
Birger Iversen.
\newblock {\em Cohomology of sheaves}.
\newblock Universitext. Springer-Verlag, Berlin, 1986.

\bibitem[Joy]{JoyceDMan}
Dominic Joyce.
\newblock D-manifolds and d-orbifolds: a theory of derived differential
  geometry.
\newblock Available online at
  \url{https://http://people.maths.ox.ac.uk/~joyce/dmbook.pdf}.

\bibitem[Joy19]{JoyceCoo}
Dominic Joyce.
\newblock Algebraic geometry over {$C^{\infty}$}-rings.
\newblock {\em Mem. Amer. Math. Soc.}, 260(1256):v+139, 2019.

\bibitem[JS12]{JoyceSong}
Dominic Joyce and Yinan Song.
\newblock A theory of generalized {D}onaldson-{T}homas invariants.
\newblock {\em Mem. Amer. Math. Soc.}, 217(1020):iv+199, 2012.

\bibitem[KL07]{KiemLiSmoothCosection}
Young-Hoon {Kiem} and Jun {Li}.
\newblock {Gromov-Witten invariants of varieties with holomorphic 2-forms}.
\newblock {\em arXiv e-prints}, page arXiv:0707.2986, July 2007.

\bibitem[KL13]{KiemLiCosection}
Young-Hoon Kiem and Jun Li.
\newblock Localizing virtual cycles by cosections.
\newblock {\em J. Amer. Math. Soc.}, 26(4):1025--1050, 2013.

\bibitem[KL20]{KiemLiQuantum}
Young-Hoon Kiem and Jun Li.
\newblock Quantum singularity theory via cosection localization.
\newblock {\em J. Reine Angew. Math.}, 766:73--107, 2020.

\bibitem[KLS17]{KLS}
Young-Hoon {Kiem}, Jun {Li}, and Michail {Savvas}.
\newblock {Generalized Donaldson-Thomas Invariants via Kirwan Blowups}.
\newblock {\em arXiv e-prints}, page arXiv:1712.02544, December 2017.

\bibitem[KP20]{KiemPark}
Young-Hoon {Kiem} and Hyeonjun {Park}.
\newblock {Localizing virtual cycles for Donaldson-Thomas invariants of
  Calabi-Yau 4-folds}.
\newblock {\em arXiv e-prints}, page arXiv:2012.13167, December 2020.

\bibitem[KS10]{KontSoibel}
Maxim Kontsevich and Yan Soibelman.
\newblock Motivic {D}onaldson-{T}homas invariants: summary of results.
\newblock In {\em Mirror symmetry and tropical geometry}, volume 527 of {\em
  Contemp. Math.}, pages 55--89. Amer. Math. Soc., Providence, RI, 2010.

\bibitem[KS21]{KiemSavvas}
Young-Hoon Kiem and Michail Savvas.
\newblock K-theoretic generalized {D}onaldson--{T}homas invariants.
\newblock {\em Int. Math. Res. Not. IMRN}, (24):19055--19090, 2021.

\bibitem[LT98]{LiTian}
Jun Li and Gang Tian.
\newblock Virtual moduli cycles and {G}romov-{W}itten invariants of algebraic
  varieties.
\newblock {\em J. Amer. Math. Soc.}, 11(1):119--174, 1998.

\bibitem[Oko19]{Okou1}
Andrei Okounkov.
\newblock Takagi lectures on {D}onaldson-{T}homas theory.
\newblock {\em Jpn. J. Math.}, 14(1):67--133, 2019.

\bibitem[OT]{OhThomasII}
Jeongseok {Oh} and Richard~P. {Thomas}.
\newblock {Counting Sheaves on Calabi-Yau 4-folds, II}.
\newblock Preprint.

\bibitem[OT20]{OhThomasI}
Jeongseok {Oh} and Richard~P. {Thomas}.
\newblock {Counting sheaves on Calabi-Yau 4-folds, I}.
\newblock {\em arXiv e-prints}, page arXiv:2009.05542, September 2020.

\bibitem[PTVV13]{PTVV}
Tony Pantev, Bertrand To{\"e}n, Michel Vaqui{\'e}, and Gabriele Vezzosi.
\newblock Shifted symplectic structures.
\newblock {\em Publications math{\'e}matiques de l'IH{\'E}S}, 117(1):271--328,
  2013.

\bibitem[{Sav}20]{Sav}
Michail {Savvas}.
\newblock {Generalized Donaldson-Thomas Invariants of Derived Objects via
  Kirwan Blowups}.
\newblock {\em arXiv e-prints}, page arXiv:2005.13768, May 2020.

\bibitem[Sie99]{Siebert}
Bernd Siebert.
\newblock Algebraic and symplectic {G}romov-{W}itten invariants coincide.
\newblock {\em Ann. Inst. Fourier (Grenoble)}, 49(6):1743--1795, 1999.

\bibitem[Tho00]{Thomas}
Richard~P. Thomas.
\newblock A holomorphic {C}asson invariant for {C}alabi-{Y}au 3-folds, and
  bundles on {$K3$} fibrations.
\newblock {\em J. Differential Geom.}, 54(2):367--438, 2000.

\bibitem[To{\"e}14]{Toen}
Bertrand To{\"e}n.
\newblock Derived algebraic geometry.
\newblock {\em EMS Surv. Math. Sci.}, 1(2):153--240, 2014.

\end{thebibliography}
\bibliographystyle{alpha}

\end{document}